\documentclass[a4paper]{article}
\usepackage{amsthm}
\usepackage{amsmath}
\usepackage{amssymb}
\usepackage{graphicx}
\newcommand{\RR}{{\cal R}}
\newcommand{\R}{\mathrm{R}}
\newcommand{\QQ}{{\cal Q}}

\newcommand{\el}{\mbox{{\rm el}}}
\newcommand{\cyl}{\mbox{{\rm cyl}}}
\newcommand{\surf}{\mbox{{\rm surf}}}
\newcommand{\gen}{\mbox{{\rm gen}}}
\newcommand{\claim}[2]{\begin{equation}\mbox{\parbox{\linewidth}{{\em #2}}}\label{#1}\end{equation}}
\newtheorem{theorem}{Theorem}[section]
\newtheorem{corollary}[theorem]{Corollary}
\newtheorem{lemma}[theorem]{Lemma}
\newcount\claimno
\claimno=0

\def\newclaim#1#2{
   \global\advance\claimno by 1\relax
   \bigskip\noindent\rlap{\rm(\the\claimno)}\ignorespaces
   \global\expandafter\edef\csname CLAIMLABEL#1\endcsname{\the\claimno}\relax
   \hangindent=33pt\hskip30pt{\sl#2}\bigskip}
\def\refclaim#1{\csname CLAIMLABEL#1\endcsname}
\def\mylabel#1{{\label{#1}}}
\def\junk#1{}

\newenvironment{subproof}{%
  \begin{proof}[Subproof]%
}{%
  \end{proof}%
}

\begin{document}
\title{Three-coloring triangle-free graphs on surfaces III.  Graphs of girth five}
\author{%
     Zden\v{e}k Dvo\v{r}\'ak\thanks{Computer Science Institute of Charles University,
           Malostransk{\'e} n{\'a}m{\v e}st{\'\i} 25, 118 00 Prague, 
           Czech Republic. E-mail: {\tt rakdver@iuuk.mff.cuni.cz}. 
           Supported by the Center of Excellence -- Inst. for Theor. Comp. Sci., Prague, project P202/12/G061 of Czech Science Foundation.}
 \and
     Daniel Kr{\'a}l'\thanks{Warwick Mathematics Institute, DIMAP and Department of Computer Science, University of Warwick, Coventry CV4 7AL, United Kingdom. E-mail: {\tt D.Kral@warwick.ac.uk}.}
 \and
        Robin Thomas\thanks{School of Mathematics, 
        Georgia Institute of Technology, Atlanta, GA 30332. 
        E-mail: {\tt thomas@math.gatech.edu}.
        Partially supported by NSF Grants No.~DMS-0739366 and DMS-1202640.}
}
\date{July 14, 2018}
\maketitle
\begin{abstract}
We show that the size of a $4$-critical graph of girth at least five is bounded by a linear function of its genus.
This strengthens the previous bound on the size of such graphs given by Thomassen.
It also serves as the basic case for the description of the structure of $4$-critical triangle-free
graphs embedded in a fixed surface, presented in a future paper of this series.
\end{abstract}

\section{Introduction}

This paper is a part of a series aimed at studying the $3$-colorability
of graphs on a fixed surface that are either triangle-free, or have their
triangles restricted in some way. Historically the first result in this direction is the following
classical theorem of Gr\"otzsch~\cite{grotzsch1959}.

\begin{theorem}
\label{grotzsch}
Every triangle-free planar graph is $3$-colorable.
\end{theorem}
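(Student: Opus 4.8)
The plan is to prove a stronger statement by induction on $|V(G)|$, since Theorem~\ref{grotzsch} as stated supplies no useful inductive hypothesis. Following Thomassen's approach, the natural strengthening allows a precoloring of a short cycle bounding the outer face: \emph{if $G$ is a triangle-free plane graph whose outer face is bounded by a cycle $C$ with $|C|$ at most a suitable small constant (the value $6$ works), then every proper $3$-coloring of $C$ extends to a proper $3$-coloring of $G$, with the only exceptions forming a short explicit list of small graphs --- such as a $6$-cycle together with a chord joining antipodal vertices, or a $6$-cycle together with one new vertex adjacent to three alternate vertices, in each of which the precoloring $1,2,3,1,2,3$ fails to extend.} Pinning down the exact length bound together with the exact list of exceptions is itself a delicate part of the argument: too weak a statement is not inductive, while a statement without the right exceptions is false. (Note, for instance, that the list-coloring analogue of Gr\"otzsch's theorem is \emph{false} for triangle-free planar graphs, so any correct proof must genuinely use that the three available colors are the same at every vertex.)

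To deduce Theorem~\ref{grotzsch} from this, argue by induction on $|V(G)|$: we may assume $G$ is connected; deleting a vertex of degree at most $2$ and coloring it last disposes of the case $\delta(G)\le 2$; and if $G$ is not $2$-connected we color its blocks separately and combine. Thus $G$ is $2$-connected, so every face is bounded by a cycle, and since $G$ is triangle-free with $\delta(G)\ge 3$, Euler's formula forces a face of length $4$ or $5$. Re-embed $G$ with such a face as the outer face and apply the strengthened statement; in the rare event that $G$ itself is one of the finitely many exceptional configurations, verify its $3$-colorability directly.

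For the inductive step of the strengthened statement, I would analyse the structure near $C$. If $C$ has a chord, or if $G$ contains a separating cycle of length at most $|C|$, then that cycle splits $G$ into two smaller plane graphs each with a short outer cycle; I would extend the precoloring across one piece by induction and then across the other, treating the exceptional small graphs explicitly whenever a piece happens to be one of them. Otherwise $G$ is ``internally $4$-connected'' relative to $C$, and here I would run a discharging argument: assign charges to vertices and faces via Euler's formula, redistribute them by local rules, and conclude that $G$ contains a reducible configuration in its interior --- for instance a vertex of degree $3$ with a sufficiently sparse neighborhood, or a $4$-face incident with vertices of small degree. For each such configuration I would delete or contract/identify a bounded set of vertices to obtain a smaller triangle-free plane graph with a short outer cycle, $3$-color it by induction, and pull the coloring back.

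The main obstacle is the interaction of the reduction steps with the discharging bookkeeping. Identifying two vertices at distance $2$ can create a triangle, so one must show that at each reducible configuration some available identification is triangle-safe, or else that the obstruction to this yields an even smaller reducible structure; at the same time one must keep the outer cycle short under the reductions and avoid accidentally landing on an exceptional configuration. Finally, proving that the discharging rules never terminate with positive total charge --- equivalently, that the chosen list of reducible configurations is genuinely unavoidable --- is the longest part of the work, though conceptually routine once the configurations have been fixed.
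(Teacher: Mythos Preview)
The paper does not prove Theorem~\ref{grotzsch} at all: it is quoted as a classical result of Gr\"otzsch, with pointers to Thomassen's three simplified proofs and to the proof of Dvo\v{r}\'ak--Kawarabayashi--Thomas, and is then used as background and as a tool later in the argument. So there is no ``paper's own proof'' to compare your proposal against.

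That said, your outline is a reasonable sketch of one of the standard modern proofs. The strengthening you describe --- precoloring a short outer cycle of length at most $6$, with a finite explicit list of obstructions --- is essentially Thomassen's strategy (and closely related to the machinery this very paper builds on: compare the exceptional graphs (E0)--(E5) and Theorem~\ref{thm:diskgirth5}, which are the girth-$5$ analogues). Your reduction from the bare statement to the precoloring version via Euler's formula is correct: if $G$ is triangle-free with $\delta(G)\ge 3$, then the inequalities $2|E|\ge 3|V|$ and $2|E|\ge 4|F|$ plugged into Euler force a face of length $4$ or $5$. One caution: in the inductive step you plan to contract or identify vertices to kill a reducible configuration. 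In the girth-$4$ setting this is exactly where most of the pain lives, because identifications that create triangles must be avoided and the alternative identifications can create short separating cycles that re-enter the exceptional list; Thomassen's published proofs handle this with some care, and your sketch correctly flags it as the main obstacle without resolving it. As a proof \emph{plan} this is fine; as a proof it is still a long way from complete.
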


Thomassen~\cite{thom-torus,thomassen1995-34,ThoShortlist}
found three reasonably simple proofs of this claim.
Recently, two of us, in joint work with Kawarabayashi~\cite{DvoKawTho}
were able to design a linear-time algorithm to $3$-color triangle-free
planar graphs, and as a by-product found perhaps a yet simpler proof
of Theorem~\ref{grotzsch}.
The statement of Theorem~\ref{grotzsch}
cannot be directly extended to any surface other than the sphere.
In fact, for every non-planar surface $\Sigma$ there are infinitely many
$4$-critical graphs that can be embedded in $\Sigma$.
For instance, the graphs obtained from an odd cycle of length five or more
by applying Mycielski's
construction \cite[Section~8.5]{BonMur} have that property.
Thus an algorithm for testing $3$-colorability of triangle-free graphs
on a fixed surface will have to involve more than just testing the
presence of finitely many obstructions.

The situation is different for graphs of girth at least five
by another deep theorem of Thomassen~\cite{thomassen-surf}, the following.

\begin{theorem}
\mylabel{thm:thomgirth5}
For every surface $\Sigma$ there are only finitely many $4$-critical
graphs of girth at least five that can be embedded in $\Sigma$.
\end{theorem}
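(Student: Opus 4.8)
\medskip
\noindent\textit{Proof idea.}\ The plan is to establish the quantitative form of the statement: there is a function $N$ such that every $4$-critical graph of girth at least five embeddable in a surface of Euler genus $g$ has fewer than $N(g)$ vertices. Since there are only finitely many graphs on fewer than $N(g)$ vertices, this implies the theorem. I would argue by induction on $g$, and to make the induction run I would prove the stronger statement that for all $k$ and $\ell$ there are only finitely many graphs $H$ of girth at least five embedded in some surface of Euler genus at most $g$ with at most $k$ distinguished facial cycles $C_1,\dots,C_k$, each of length at most $\ell$, for which some proper $3$-colouring of $C_1\cup\dots\cup C_k$ fails to extend to $H$ yet extends to $H-e$ for every edge $e$ not lying on one of the $C_i$. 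Taking $k=0$ recovers the statement about $4$-critical graphs, since a $4$-critical graph is exactly an inclusion-minimal graph that is not $3$-colourable. Such a ``critical configuration'' has minimum degree at least three away from the distinguished cycles, so Euler's formula and the girth bound together give the usual linear inequality among its numbers of vertices, edges, and faces.

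\medskip
\noindent\textit{Base case $g=0$.}\ For $k=0$ this is Gr\"otzsch's theorem (Theorem~\ref{grotzsch}): no triangle-free $4$-critical graph, in particular no $4$-critical graph of girth five, embeds in the sphere. For $k\ge1$ one needs a precolouring-extension theorem of Gr\"otzsch type: a plane graph of girth five with at most $k$ precoloured faces of length at most $\ell$ is always recolourable once it has sufficiently many vertices. I would prove this by the method of reducible configurations: in a minimal counterexample $H$, a discharging argument based on Euler's formula forces a small structure near the precoloured faces --- a short path of degree-three vertices, a short separating cycle bounding a disc with few interior vertices, two nearby precoloured faces, and the like --- that either extends the precolouring directly or produces a strictly smaller counterexample, a contradiction; hence $|V(H)|\le M(k,\ell)$. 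Handling two or more precoloured faces, in particular two faces in a cylinder, is considerably harder than the disc case and is where the main difficulty of the base case sits.

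\medskip
\noindent\textit{Inductive step $g\ge1$.}\ Let $H$ be a critical configuration in a surface of Euler genus $g$ with $k$ distinguished cycles of length at most $\ell$. If $H$ has no non-contractible cycle of length at most a suitable constant $w=w(g,k,\ell)$ and disjoint from the distinguished cycles, then a breadth-first layering argument combined with the disc case shows that the precolouring extends to $H$: balls of radius roughly $w/2$ around regions avoiding the distinguished cycles are discs, and one propagates a colouring inward. This contradicts criticality, so $H$ has a short non-contractible cycle $C$ disjoint from the distinguished cycles. Cutting the surface along $C$ and capping the new boundary circles with discs produces --- whether $C$ is one-sided, two-sided non-separating, or two-sided separating --- one or two surfaces of strictly smaller total Euler genus, in which $H$ becomes a graph $H'$ with at most $k+2$ distinguished faces (the old ones together with at most two copies of $C$), still of bounded length. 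Using criticality of $H$ one checks that for an appropriate proper $3$-colouring of the distinguished cycles of $H'$, possibly after deleting inessential edges and discarding any component meeting no distinguished cycle, one again obtains a critical configuration, now on a surface of smaller total Euler genus. By the induction hypothesis it has boundedly many vertices, and since cutting along $C$ only duplicates the at most $w$ vertices of $C$, the same bound controls $|V(H)|$. Each cut strictly drops the total Euler genus, so after at most $g$ cuts one reaches Euler genus $0$ and invokes the base case; unwinding the recursion yields the explicit function $N(g)$.

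\medskip
\noindent\textit{Main obstacle.}\ The bulk of the work is the base case --- the Gr\"otzsch-type precolouring-extension results for graphs of girth five in the disc and, especially, in the cylinder and other near-plane pieces with two or more precoloured faces. Making the discharging close, i.e., proving that a minimal counterexample of bounded ``ring'' length must contain one of a finite list of reducible configurations, is where essentially all the difficulty lies; it is also where the hypothesis of girth five is indispensable, since girth four already fails by the Mycielski examples. A secondary but still delicate point is the bookkeeping in the cutting step: one must verify that $4$-criticality of $H$ really descends to criticality of the cut graph, deal with the edges of $C$ itself, and keep both the number and the lengths of the distinguished cycles under control through all $g$ cuts.
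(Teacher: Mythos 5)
Your high-level skeleton --- induct on genus, carry a bounded number of precoloured boundary cycles (``rings'') through the induction, cut along a short non-contractible cycle to drop genus, and fall back to a Gr\"otzsch-type precolouring-extension theorem in the plane/disc/cylinder --- is the classical strategy and is roughly Thomassen's original route. It is not the route the paper takes: the paper proves the much stronger quantitative Theorem~\ref{thm:mainsurf} (a linear bound $|V(G)|\le Cg$), deduced from Theorem~\ref{thm:maingen}, which runs a single unified induction on $(g,|\RR|,m(G),|E(G)|)$ with a carefully designed weight function $w(G,\RR)$ and an additive potential $\surf(g,|\RR|,t_0,t_1)$. The cutting steps you describe do occur in that proof, but only as bookkeeping moves; the engine is Theorem~\ref{thm:summary} from~\cite{trfree2}, which produces a strictly smaller $\RR$-critical graph $G'$ together with an elasticity-controlled cover of $G$ by faces of $G'$. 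There is no separate ``locally planar'' case: the ``no short non-contractible cycle'' regime is precisely where Theorem~\ref{thm:summary} is invoked, not circumvented.

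That is also where your proposal has a genuine gap. You claim that if $H$ has no short non-contractible cycle disjoint from the distinguished cycles, then ``a breadth-first layering argument combined with the disc case shows that the precolouring extends to $H$\ldots one propagates a colouring inward.'' Proper $3$-colourings cannot be propagated layer by layer from a precoloured boundary: a partial colouring of a ball of radius $r$ in general does not extend to the ball of radius $r+1$, and one cannot repair earlier choices locally. If such an argument worked, Gr\"otzsch's theorem itself would be a triviality, and the very disc/cylinder extension theorems you (correctly) flag as hard in the base case would be unnecessary. The correct content of the ``locally planar'' step is that a sufficiently large girth-$5$ graph with bounded rings and no short non-contractible cycles must contain a reducible configuration --- exactly the assertion encapsulated in Theorem~\ref{thm:summary} and the weight calculus of Sections~\ref{sec-narrowcyl}--\ref{sec-surfaces}. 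So the discharging machinery is indispensable not only in the base case but in the inductive step as well; replacing it by breadth-first propagation does not close the argument. A secondary soft spot: after cutting along $C$ you assert that criticality descends to the cut graph (this is fine, cf.\ Lemma~\ref{lemma-crcon}), but you also need to ensure that the distinguished cycles remain pairwise far apart and that short paths between them are dealt with --- the paper does a nontrivial case analysis for exactly this (merging rings, bounding $\ell(\RR)$), which your sketch elides.
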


Thus the $3$-colorability problem on a fixed surface
has a linear-time algorithm
for graphs of girth at least five, but the presence of cycles of
length four complicates matters.
Let us remark that there are no $4$-critical graphs of girth at least five
on the projective plane and the torus~\cite{thom-torus} and
on the Klein bottle~\cite{tw-klein}.

In his proof of Theorem~\ref{thm:thomgirth5}, Thomassen does not give a specific bound on
the size of a $4$-critical graph of girth at least five embedded in $\Sigma$.  It appears that if one was to extract
a bound from the argument, that bound would be at least
doubly-exponential in the genus of $\Sigma$.  In this paper, we give a different
proof of the result, which gives a linear bound.

\begin{theorem}
\mylabel{thm:mainsurf}
There exists a constant $C$ with the following property.
If $G$ is a $4$-critical graph of Euler genus $g$ and girth at least $5$, then $|V(G)|\le Cg$.
\end{theorem}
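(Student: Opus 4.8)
The plan is to use a discharging argument on the embedded graph $G$, exploiting the girth-five condition to obtain strong local structure. First I would pass to a $2$-cell embedding of $G$ in a surface $\Sigma$ of Euler genus $g$ (if the embedding is not $2$-cell, one works inside a component after cutting along noncontractible curves, which only decreases the genus). By Euler's formula, $\sum_{v}(\deg(v)-4) + \sum_{f}(\ell(f)-4) = 2(\operatorname{Euler genus}) - 8$ when we weight vertices and faces symmetrically; since girth at least five forces every face to have length at least $5$, and $4$-criticality forces minimum degree at least $3$, the only ``negative'' contributions come from the degree-$3$ vertices, and we get a bound of the form $(\text{number of }3\text{-vertices}) \le (\text{positive face/vertex surplus}) + O(g)$. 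The heart of the matter is to show that the positive surplus is itself controlled: faces of length exactly $5$ and vertices of degree exactly $4$ are, in a $4$-critical graph of girth five, severely constrained.

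The key step is a \emph{reducibility} analysis. One shows that certain configurations cannot occur in a $4$-critical graph of girth at least five: for instance, two adjacent $3$-vertices, or a $3$-vertex on two $5$-faces, or more generally small subgraphs whose deletion admits a $3$-coloring that extends back. Each such reduction is proved by a short discharging-free argument (deleting a vertex or contracting a short path, invoking $3$-criticality of the smaller graph, then checking that at most two ``forbidden'' colors appear on the boundary). Having listed enough reducible configurations, I would then run the discharging phase: assign each vertex charge $\deg(v)-4$ and each face charge $\ell(f)-4$, and design discharging rules that move charge from the (nonnegative-charge) large faces and high-degree vertices to the (negative-charge) $3$-vertices, so that after discharging every vertex and face has nonnegative charge except for a bounded ``defect'' localized near the handles. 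Summing, the total charge equals $2g-8$ (or a similar linear expression), which forces $|V(G)| = O(g)$ once we know the number of edges and faces are each within a constant factor of $|V(G)|$ (again from girth five and minimum degree three).

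The main obstacle, I expect, is the discharging design itself, together with assembling a sufficiently rich and correct list of reducible configurations. In the planar, genus-zero case this is essentially the content of modern proofs of Gr\"otzsch's theorem (Theorem~\ref{grotzsch}) restricted to girth five, which are already delicate; here one must check that every rule and every reduction survives in the presence of noncontractible cycles and that the accumulated ``error'' across the surface is genuinely $O(g)$ rather than, say, $O(g^2)$. Concretely, a short noncontractible cycle could in principle support a bounded region of bad charge, and one must argue there are only $O(g)$ disjoint such regions (e.g.\ via a bound on the number of disjoint noncontractible cycles, or by cutting the surface into $O(g)$ pieces each of which is planar-like). A secondary technical point is handling low-genus faces or ``wide'' annular pieces: there one likely needs an auxiliary result classifying precolored cycles in a cylinder that do not extend, analogous to what is done for Gr\"otzsch-type theorems, and then bounding how often such non-extendable configurations can be glued together. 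Once the discharging is balanced and the surface is decomposed into $O(g)$ controlled regions, the linear bound $|V(G)| \le Cg$ follows by summation.
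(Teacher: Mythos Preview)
Your outline has a genuine gap at the very point you identify as the crux: the reducible configurations you list do not work, and nothing in the sketch explains how discharging would yield a \emph{linear} bound rather than mere finiteness. Concretely, ``two adjacent $3$-vertices'' is \emph{not} reducible in a $4$-critical graph of girth five: deleting one such vertex leaves three neighbors that may receive three distinct colors, and the usual identification tricks create $(\le 4)$-cycles precisely because the girth is only five. Likewise, ``a $3$-vertex on two $5$-faces'' is not reducible by any short argument. The paper's own structural conditions (I1) and (I2) only forbid an \emph{even cycle} of internal degree-$3$ vertices, or such a cycle with two adjacent external neighbors---much weaker than what you would need for discharging to balance. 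Thomassen's original proof of finiteness (Theorem~\ref{thm:thomgirth5}) already shows that \emph{some} bound exists; the entire content of Theorem~\ref{thm:mainsurf} is linearity, and that is exactly what your sketch defers to ``the discharging design itself'' without supplying any mechanism.

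The paper's route is quite different. It proves the stronger Theorem~\ref{thm:maingen} about $\RR$-critical graphs with precolored boundary rings, by induction on the tuple (genus, number of rings, number of non-boundary edges, number of edges). Faces are weighted by a calibrated function $s(\ell)$, and the inductive step is driven not by local reducible configurations but by the black-box reduction Theorem~\ref{thm:summary} from~\cite{trfree2}: given a critical $G$ whose weight exceeds a threshold linear in $g$, $|\RR|$, and $\ell(\RR)$, one finds a smaller critical $G'$ together with a \emph{cover of $G$ by faces of $G'$} whose total elasticity and contribution are controlled. Short non-contractible cycles and paths between rings are handled first by explicit cutting arguments (using the disk and cylinder bounds of Theorems~\ref{thm:diskgirth5} and Lemma~\ref{lem:cyl47}), so that when Theorem~\ref{thm:summary} is invoked the capturing subgraph $M$ has only $O(|\RR|)$ edges. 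The linear dependence on $g$ then comes from the function $\surf(g,t,t_0,t_1)$ and the bookkeeping of Lemma~\ref{lemma-surfineq}, not from discharging. Theorem~\ref{thm:mainsurf} is the special case $\RR=\emptyset$.
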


Let us now outline the relationship of this result to the structure of
triangle-free $4$-critical graphs.
The only non-planar surface for which the $3$-colorability problem
for triangle-free graphs is fully characterized is the projective plane.
Building on earlier work of Youngs~\cite{Youngs}, Gimbel and
Thomassen~\cite{gimbel} obtained the following elegant characterization.
A graph embedded in a surface is a {\em quadrangulation} if every face
is bounded by a cycle of length four.

\begin{theorem}
\mylabel{thm:gimtho}
A triangle-free graph embedded in the projective plane is $3$-colorable if and only
if it has no subgraph isomorphic to a non-bipartite
quadrangulation of the projective plane.
\end{theorem}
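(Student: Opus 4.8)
\textbf{Proof proposal for Theorem~\ref{thm:gimtho}.}

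The plan is to prove the two directions separately, with the forward implication being the routine one and the reverse implication being the heart of the matter. For the ``only if'' direction, suppose $G$ is triangle-free and embedded in the projective plane and contains a subgraph $H$ that is a non-bipartite quadrangulation. A non-bipartite quadrangulation of the projective plane is not $3$-colorable: in a quadrangulation, the faces are all $4$-cycles, and a standard parity argument (each face, being an even cycle, is consistent with a $2$-coloring locally, but the global obstruction is precisely non-bipartiteness, which forces an odd ``winding'') shows that any proper coloring must in fact use only $2$ colors on $H$ if it uses $3$; more carefully, one shows via the Euler-genus/quadrangulation structure that a $3$-coloring of a quadrangulation of the projective plane would yield a $2$-coloring, contradicting non-bipartiteness. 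Since a subgraph of a $3$-colorable graph is $3$-colorable, $G$ is not $3$-colorable. I would cite Youngs~\cite{Youngs} for the fact that a non-bipartite quadrangulation of the projective plane has chromatic number exactly $4$, so that this direction is immediate.

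For the ``if'' direction we argue by contradiction: assume $G$ is triangle-free, embedded in the projective plane, not $3$-colorable, and contains no non-bipartite quadrangulation as a subgraph; take $G$ to be a vertex-minimal (equivalently, edge-minimal) counterexample, so that $G$ is $4$-critical. First I would reduce to the case where $G$ is an induced subgraph of a quadrangulation, or at least where every face of $G$ has length $4$ or $5$: using criticality together with Grötzsch's theorem (Theorem~\ref{grotzsch}), a triangle-free $4$-critical graph cannot have a contractible face of length $\le 5$ bounding a disk whose interior is large, because the interior could be recolored; more precisely, every $4$-critical triangle-free graph in the projective plane has all faces of length $4$ except for at most one face, which is a $5$-face or the $4$-face ``unrolled'' across the crosscap. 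The key structural step is to show that such a $G$, after possibly adding edges to turn the exceptional region into $4$-faces while preserving triangle-freeness and non-$3$-colorability, becomes a quadrangulation $G'$; then $G'$ must be non-bipartite, for otherwise $G'$ (hence $G$) would be $3$-colorable (a bipartite graph is $2$-colorable), and $G \subseteq G'$ gives the desired non-bipartite quadrangulation subgraph.

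The main obstacle, and the place where real work is needed, is controlling the faces of length $\ge 6$ and the possibility that adding edges to ``quadrangulate'' creates triangles or destroys non-$3$-colorability. I expect the argument to run as follows: if $G$ has a face $F$ of length $\ell \ge 6$, consider the graph obtained by cutting along $F$ and capping it with a disk. By minimality and Grötzsch's theorem applied to the capped-off piece (which is now planar or of smaller genus), one derives constraints on the colorings of the boundary cycle of $F$; a careful case analysis---essentially the ``precoloring extension'' technology for planar triangle-free graphs, as developed by Thomassen---shows that one can always either $3$-color $G$ (contradiction) or find a chord or short connection that reduces $\ell$. Iterating, all faces become $4$-faces (with the crosscap accounting for non-bipartiteness), completing the proof. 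The delicate points are (i) ensuring the added edges do not create triangles, which requires the girth/short-cycle hypotheses and a distance argument on $F$, and (ii) handling the single non-contractible face, where the quadrangulation ``wraps'' the crosscap---this is exactly where non-bipartiteness enters and must be tracked through the reduction.
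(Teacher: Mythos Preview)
The paper does not prove Theorem~\ref{thm:gimtho}; it is quoted as background and attributed to Gimbel and Thomassen~\cite{gimbel}, building on Youngs~\cite{Youngs}. There is therefore no proof in the paper to compare your proposal against.

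On the merits of your sketch: the ``only if'' direction is fine once you invoke Youngs' theorem that a non-bipartite quadrangulation of the projective plane has chromatic number~$4$. For the ``if'' direction, your overall target is correct---a $4$-critical triangle-free graph on the projective plane must itself be a non-bipartite quadrangulation (otherwise it would properly contain one, which would then be $3$-colorable by criticality, contradicting Youngs). But your route to that target has real gaps. The assertion that ``every $4$-critical triangle-free graph in the projective plane has all faces of length $4$ except for at most one face'' is exactly what needs to be proved, not something you can state en route; and the device of ``adding edges to quadrangulate while preserving triangle-freeness and non-$3$-colorability'' is not sound as written---adding edges can only make a graph harder to color, so non-$3$-colorability is preserved, but there is no reason you can always add such edges inside a long face without creating a triangle, and you give no argument for this. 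The actual Gimbel--Thomassen argument does not add edges; it cuts along a shortest non-contractible cycle to obtain a planar graph with a precolored boundary and then invokes a precoloring-extension theorem for triangle-free planar graphs to derive a contradiction when some face has length at least~$5$. Your ``cutting along a face of length~$\ge 6$'' paragraph gestures at this, but cutting along a face boundary (which is contractible) is not the same operation and does not reduce the genus, so the appeal to Gr\"otzsch there does not go through as stated.
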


For other surfaces there does not seem to be a similarly nice characterization,
but in a later paper of this series we will present a polynomial-time
algorithm to decide whether a triangle-free graph in a fixed surface
is $3$-colorable.
The algorithm naturally breaks into two steps.
The first is when the graph is
a quadrangulation, except perhaps for a bounded number of larger faces
of bounded size, which will be allowed to be precolored.
In this case there is a simple topological obstruction to the existence
of a coloring extension based on the so-called ``winding number" of
the precoloring.
Conversely, if the obstruction is not present and the graph is highly
``locally planar", then we can show that the precoloring can be
extended to a $3$-coloring of the entire graph.
This can be exploited to design a polynomial-time algorithm.
With additional effort the algorithm can be made to run in linear time.

The second step covers the remaining case, when the graph has either many faces
of size at least five, or one large face, and the same holds for every
subgraph.
In that case, we reduce the problem to Theorem~\ref{thm:mainsurf} and
show that the graph is $3$-colorable.  More precisely, in a future paper of
this series, we use Theorem~\ref{thm:mainsurf} to derive the following
cornerstone result.

\begin{theorem}
\mylabel{thm:corner}
There exists an absolute constant $K$ with the following property.
Let $G$ be a graph embedded in a surface $\Sigma$ of Euler genus $\gamma$
so that every $4$-cycle bounds a $2$-cell face,
and let $t$ be the number triangles in $G$.
If $G$ is $4$-critical, 
then $\sum|f|\le K(t+\gamma-1)$,
where the summation is over all faces $f$ of $G$ of length at least five.
\end{theorem}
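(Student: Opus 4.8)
\medskip
\noindent\emph{Sketch of proof of Theorem~\ref{thm:corner}.}
The plan is to deduce it from Theorem~\ref{thm:mainsurf} by successively disposing of the triangles of $G$ and then of its quadrilateral faces, all the while bounding the change in the quantity $S:=\sum_{|f|\ge 5}|f|$ by $O(t+\gamma)$, until what remains is in essence a $4$-critical graph of girth five, for which Theorem~\ref{thm:mainsurf} together with Euler's formula finishes the job.

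It is worth recording the girth-five case first, since it exhibits the role of Theorem~\ref{thm:mainsurf}. Euler's formula and double counting of vertex--face incidences give
\[
\sum_{f}(|f|-4)+\sum_{v}(d(v)-4)=4\gamma-8 ,
\]
and since a $4$-critical graph has minimum degree at least $3$ we get $\sum_v(4-d(v))\le |V(G)|$. If in addition every face has length at least five, then $\sum_f(|f|-4)\ge |F(G)|$, so
\[
\sum_f|f|=\sum_f(|f|-4)+4|F(G)|\le 5\sum_f(|f|-4)\le 5\bigl(4\gamma-8+|V(G)|\bigr),
\]
which is $O(\gamma)$ by Theorem~\ref{thm:mainsurf}, far below $K(t+\gamma)$. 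Hence all the difficulty comes from faces of length three and four.

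For the triangles: since $G$ has no separating cycle of length at most four, every triangle of $G$ is either a facial triangle --- and there are at most $t$ of those --- or a non-contractible one, and cutting $\Sigma$ open along such a triangle strictly decreases the Euler genus while creating only a bounded number of new edges, so only $O(\gamma)$ of them occur. One would like to replace $G$ by a graph of girth at least four, embedded in a surface of Euler genus $O(\gamma)$, with $S$ decreased by only $O(t+\gamma)$. The first real obstacle is that the obvious surgeries --- deleting the open disk bounded by a facial triangle, or identifying two vertices of a triangle --- need not preserve $4$-criticality. To get around this one should work throughout with the more flexible notion of a graph that is \emph{critical relative to a prescribed colouring of some of its faces}, so that the pieces split off by the cuts near triangles can themselves be handled by the same machinery with the accounting remaining linear.

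The hardest step is removing the quadrilateral faces. A region of $\Sigma$ tiled entirely by $4$-faces is, as far as extending a $3$-colouring is concerned, governed only by a winding-number invariant of its boundary --- the phenomenon already visible in Theorem~\ref{thm:gimtho} --- so a $4$-critical graph cannot contain a large such region, and any maximal quadrangulated patch can be compressed to one of bounded size without destroying criticality. The bookkeeping is delicate: one must bound the number of patches and the number of faces of length at least five adjacent to them in terms of $\gamma$ and $S$, and check that the compression is simultaneously genus-preserving, criticality-preserving, and alters $S$ by only a bounded amount per patch, with total additive error $O(t+\gamma)$. After every quadrangulated patch has been compressed, the resulting graph has girth essentially five --- more precisely, the number of its edges incident with a $4$-face is $O(S+\gamma)$ --- so the girth-five estimate above, with Theorem~\ref{thm:mainsurf} applied to the girth-five core, bounds the number of vertices and hence $\sum_f|f|$; re-expanding the patches and reattaching the triangle neighbourhoods yields $S\le K(t+\gamma)$. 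I expect the quadrangulation-compression step --- making precise and quantitative the claim that a critical graph has no large quadrangulated region, uniformly over all surfaces and all boundary configurations --- to be where essentially all of the work lies.
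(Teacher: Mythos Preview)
The paper does \emph{not} prove Theorem~\ref{thm:corner}; it is explicitly announced there as a result to appear ``in a future paper of this series.'' What the present paper establishes is the girth-five case (Theorem~\ref{thm:mainsurf}) and, via the closing remark after Theorem~\ref{thm:maingen}, the special case of Theorem~\ref{thm:corner} where $G$ has no $4$-cycles: one splits the triangles so they become vertex-disjoint, drills a cuff inside each, and treats them as \emph{rings}; then Theorem~\ref{thm:maingen} bounds $w(G,\RR)$, and hence $\sum_{|f|\ge5}|f|$, linearly in $g$ and $|\RR|=t$.

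Your sketch is heading in the same direction. The suggestion to work with graphs that are ``critical relative to a prescribed colouring of some of their faces'' is precisely the ring framework the paper develops; so for the triangle step you are not proposing something different, only rediscovering the machinery that drives Theorem~\ref{thm:maingen}. Where you diverge from the paper is that you try to reduce all the way down to a girth-five $4$-critical graph and then quote Theorem~\ref{thm:mainsurf}, whereas the paper keeps the triangles as boundary rings and applies the stronger relative result directly. Your route is workable for triangles but less clean: the surgeries you describe (cutting along non-contractible triangles, capping facial ones) are exactly what the ring/cuff formalism packages uniformly.

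The genuine gap is the $4$-cycle step, and you are candid about it. Your heuristic---that quadrangulated regions are governed by a winding invariant and hence cannot be large in a critical graph---is the right intuition, but turning it into a bound of the form $S\le K(t+\gamma)$ requires controlling \emph{how many} maximal quadrangulated patches there are and how much boundary they share with $(\ge\!5)$-faces, uniformly in the surface; this is not a bookkeeping exercise, and the present paper supplies no tools for it. So your proposal is a reasonable outline of a programme, matching the paper's own stated plan, but it is not a proof, and neither is anything in this paper: the quadrangulation analysis is deferred wholesale to the sequel.
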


The fact that the bound in Theorems~\ref{thm:mainsurf} and \ref{thm:corner} is linear
is needed in our solution~\cite{dkt} of a problem of Havel~\cite{conj-havel}, as follows.

\begin{theorem}
\mylabel{havel}
There exists an absolute constant $d$ such that if $G$ is a planar
graph and every two distinct triangles in $G$ are at distance at least $d$,
then $G$ is $3$-colorable.
\end{theorem}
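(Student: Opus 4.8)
The plan is to argue by contradiction, using Theorem~\ref{thm:corner} to reduce a hypothetical counterexample to an almost‑quadrangulation and then coloring it. Suppose $G$ is a $4$-critical planar graph in which every two distinct triangles are at distance at least $d$, where $d$ is an absolute constant to be fixed at the end in terms of the constants $C$ and $K$ of Theorems~\ref{thm:mainsurf} and~\ref{thm:corner}. Since $G$ is $4$-critical it is $2$-connected, has minimum degree at least $3$, and has no separating triangle (a separating triangle splits $G$ into two proper subgraphs, one of which is not $3$-colorable after permuting colors), so every triangle of $G$ bounds a face. To be allowed to apply Theorem~\ref{thm:corner} I would first pass to a stronger statement permitting a precolored bounding cycle of length at most four; this is the standard device (in the style of Thomassen's precoloring‑extension arguments) that lets one induct on non‑facial cycles of length at most four, splitting $G$ along such a cycle $Q$ and using the fact that the $3$-colorings of $Q$ fall into a bounded number of combinatorial types to locate a smaller counterexample on one side. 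Thus we may assume $G$ has no non‑facial cycle of length at most four. By Theorem~\ref{grotzsch} a triangle‑free planar graph is $3$-colorable, so $G$ has a triangle; let $T_1,\dots,T_k$ (with $k\ge1$) be its triangles, which are now facial, pairwise vertex‑disjoint, and pairwise at distance at least $d$.

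Applying Theorem~\ref{thm:corner} to $G$ with Euler genus $0$ and $t=k$ gives $\sum_{|f|\ge5}|f|\le Kk$, the sum being over all faces of $G$ of length at least five. Hence, apart from the $k$ triangular faces and a collection of larger faces of total perimeter at most $Kk$, every face of $G$ is a quadrilateral: $G$ is a quadrangulation of the sphere with $k$ far‑apart triangular ``defects'' and a bounded‑perimeter set of exceptional faces. Since every simple quadrangulation of the sphere is bipartite, the quadrilateral part of $G$ carries an essentially canonical $2$-coloring‑type structure that is disturbed only in a region of total perimeter $O(k)$ around the triangular and exceptional faces.

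It then remains to $3$-color $G$. For each triangle $T_i$ fix the ball $\beta_i$ of radius $\lfloor d/3\rfloor$ in $G$; the distance hypothesis makes the $\beta_i$ pairwise disjoint, with $\inter\beta_i$ containing no triangle other than $T_i$. Color $G$ outside $\bigcup_i\inter\beta_i$ by extending the bipartite‑type coloring of the quadrangulation part (adjusting along the bounded‑perimeter exceptional faces, where Theorem~\ref{grotzsch} and the local structure leave enough room), and then extend this precoloring into each $\beta_i$ separately. The extension into one $\beta_i$ is governed by a local lemma: a planar graph drawn in a closed disc with exactly one triangle, that triangle far from the boundary cycle, admits a $3$-coloring extending any prescribed $3$-coloring of its boundary cycle. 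Granting this lemma, and that the precolorings produced on the various $\partial\beta_i$ are generic enough, the extensions exist and are mutually independent, producing a $3$-coloring of $G$ and contradicting its $4$-criticality.

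The main obstacle is concentrated in two places. First, the local precoloring‑extension lemma for a disc with a single triangular face: I would prove it by replacing the triangle with a bounded‑size graph of girth at least five threaded through one handle --- a ``rainbow gadget'' whose $3$-colorings induce exactly the colorings of the triangle's vertices that use all three colors --- so that a disc with one triangle becomes a girth‑at‑least‑five configuration on a surface of bounded Euler genus; a non‑extendable precoloring would then yield a critical configuration of the type controlled by Theorem~\ref{thm:mainsurf} (in a precolored‑boundary form analogous to Theorem~\ref{thm:corner}), whose size is bounded, which is impossible once the disc is large --- and this is precisely where the radius $\lfloor d/3\rfloor$ must be chosen large. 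Second, the globalization: one must choose the reference coloring of the quadrangulation part coherently across all $k$ defects, that is, rule out a winding‑number‑type obstruction to coloring consistently around the triangular faces. Here it is essential that $G$ lies in the sphere --- the analogous obstruction on the projective plane is genuine, cf. Theorem~\ref{thm:gimtho} --- and the far‑apartness is exactly what lets the $k$ local corrections be performed independently. Tracking how large $d$ must be for the local lemma to apply and for this decoupling to work, using that the relevant critical configurations have size linear in their bounded genus, shows that a single absolute $d$ suffices.
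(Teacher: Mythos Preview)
The paper does not prove this theorem. Theorem~\ref{havel} is stated in the introduction as a result of a separate paper~\cite{dkt}; the present paper only supplies one of its ingredients (the linear bound of Theorems~\ref{thm:mainsurf} and~\ref{thm:corner}). There is therefore no ``paper's own proof'' to compare against.

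As for your sketch on its own merits: the overall architecture --- reduce to no non-facial $(\le\!4)$-cycles, invoke Theorem~\ref{thm:corner} to see that the non-quadrilateral part has total perimeter $O(k)$, and then exploit the far-apartness of the $k$ triangles to decouple the coloring problem into $k$ independent local extension problems --- is the right shape and is indeed how the linearity is used in~\cite{dkt}. But what you have written is an outline, not a proof, and you say so yourself: the two pieces you label ``obstacles'' are the entire content. The local precoloring-extension lemma (a disk with one triangle far from the boundary, any boundary $3$-coloring extends) is a genuine theorem that requires real work; your proposed handle-gadget reduction to girth $\ge 5$ is a reasonable idea but you have not carried it out, and in particular you have not explained why the resulting critical configuration being small forces the triangle to be close to the boundary. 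The globalization step is even vaguer: ``extending the bipartite-type coloring of the quadrangulation part, adjusting along the bounded-perimeter exceptional faces'' hides the whole difficulty --- a planar near-quadrangulation with $k$ triangular faces need not be $3$-colorable just because quadrangulations are bipartite, and you have not said what property of the boundaries $\partial\beta_i$ guarantees that the outside coloring exists and that each induced precoloring is extendable inward. Until those two lemmas are actually stated precisely and proved, this is a plan rather than a proof.
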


Our technique to prove Theorem~\ref{thm:mainsurf} is a refinement of the standard method of reducible configurations.
We show that every sufficiently generic graph $G$ (i.e., a graph that is large enough
and cannot be decomposed to smaller pieces along cuts simplifying the problem)
embedded in a surface contains one of a fixed list of configurations.   Each such configuration
enables us to obtain a smaller $4$-critical graph $G'$ with the property that every $3$-coloring of $G'$
corresponds to a $3$-coloring of $G$.  Furthermore, we perform the reduction in such a way
that a properly defined weight of $G'$ is greater or equal to the weight of $G$.
A standard inductive argument then shows that the weight of every $4$-critical graph is bounded,
which also restricts its size.  This brief exposition however hides a large number of technical
details that were mostly dealt with in the previous paper in the series~\cite{trfree2}.
There, we introduced this basic technique and used it to prove the following special
case of Theorem~\ref{thm:corner}.

\begin{theorem}
\mylabel{thm:maindisk}
Let $G$ be a graph of girth at least $5$ embedded in the plane and let $C$ be a cycle in $G$.
Suppose that there exists a precoloring $\phi$ of $C$ by three colors that does
not extend to a proper $3$-coloring of $G$.  Then there exists a subgraph $H\subseteq G$
such that $C\subseteq H$, $|V(H)|\le 1715|C|$ and $H$ has no proper $3$-coloring extending $\phi$.
\end{theorem}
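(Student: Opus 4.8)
The plan is to reduce to the case where $G$ is ``critical'' relative to $C$ and then to bound the size of such graphs by a discharging argument whose feasibility rests on a list of reducible configurations. Call $(G,C,\phi)$ \emph{$C$-critical} if $\phi$ does not extend to a proper $3$-coloring of $G$ but extends to every proper subgraph $G'$ with $C\subseteq G'\subsetneq G$. Given $G$ and $\phi$ as in the statement, some subgraph $G_0$ with $C\subseteq G_0\subseteq G$ is $C$-critical, so it suffices to prove $|V(G_0)|\le 1715|C|$ for $C$-critical $(G,C,\phi)$; I would argue by induction on $|V(G)|$. A few preliminary reductions let us assume $G$ is $2$-connected (split at a cut vertex, handling pendant blocks, whose ``boundary'' is controlled) and that $C$ bounds a face of $G$ (in the plane $\phi$ extends to $G$ iff it extends to the subgraph inside $C$ and to the one outside, so $\phi$ fails to extend to one of them, which then has $C$ on a face). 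Crucially, every vertex of $V(G)\setminus V(C)$ then has degree at least $3$, since a vertex of degree at most $2$ off $C$ could be deleted and its colour recovered afterwards, and any chord of $C$ joins vertices far apart on $C$ because the girth is at least $5$.

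The core of the argument is a list of \emph{reducible configurations}: small subgraphs $J$ such that whenever a $C$-critical $G$ contains $J$ one can produce a strictly smaller graph $G'$ with a precolored cycle $(C',\phi')$ such that (i) $\phi'$ does not extend to $G'$, (ii) $G'$ again has girth at least $5$, and (iii) a weight $w(H):=|V(H)|-1715|C_H|$ does not decrease, i.e.\ $w(G')\ge w(G)$. Combined with the inductive hypothesis $w(G')\le 0$, this yields $w(G)\le 0$, so a $C$-critical graph of minimum order violating the bound can contain none of the configurations. The reductions are of the standard types: delete a vertex of degree $3$ not on $C$ and identify two of its neighbours; contract a short path whose internal vertices all have degree $3$; or cut $G$ along a short path or cycle and recolour the two sides separately. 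Checking condition (i) in each case is a finite case analysis that exploits $C$-criticality: colourings of the relevant proper subgraph exist and are tightly constrained in a neighbourhood of $J$.

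Assuming now that $G$ is $C$-critical and contains none of the reducible configurations, I would bound $|V(G)|$ by discharging on the planar embedding. Give each vertex $v$ charge $\deg(v)-4$ and each face $f$ charge $|f|-4$; by Euler's formula the total charge is $-8$, and collecting the contributions of the outer face and of the vertices of $C$ — whose incident ``leakage'' into the interior is itself limited by the configurations involving $C$ — the charge attributable to the interior is bounded above by $O(|C|)$. Since the girth is at least $5$, every bounded face has charge at least $1$; the absence of the reducible configurations forces degree-$3$ vertices (charge $-1$) to be surrounded by long faces and by a sufficiently sparse pattern of other low-degree vertices that each can collect charge at least $1+\varepsilon$ from incident faces and neighbours while every bounded face and every vertex of degree at least $4$ keeps nonnegative charge. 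Then every interior vertex and bounded face ends with charge at least $\varepsilon$, so $\varepsilon\,|V(G)\setminus V(C)|\le O(|C|)$, and tracking the constants through the discharging rules and the reductions gives $|V(G)|\le 1715|C|$.

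The main obstacle is the interplay between the last two steps. On the one hand the list of configurations must be rich enough that the discharging closes; on the other hand every configuration on the list must admit an honest reduction satisfying (i)--(iii). Condition (ii) is the most delicate: identifying two vertices is safe only when their distance in $G$ is at least $5$, yet the natural reductions identify vertices whose distance one must first certify, so the configurations have to be phrased in terms of second neighbourhoods and the reductions chosen so that no short cycle is ever created; and condition (iii) forces the weight coefficient to be compatible with every reduction simultaneously. Balancing these competing demands — getting a finite, girth-safe, weight-monotone set of reductions that also suffices for the discharging — is where essentially all the work lies.
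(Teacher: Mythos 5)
Theorem~\ref{thm:maindisk} is not actually proved in this paper; it is quoted from the preceding paper~\cite{trfree2} of the series, whose key outputs are summarized in Section~\ref{sec-summary}, most notably the face-weight bound of Theorem~\ref{thm:diskgirth5} and the reduction machinery of Theorem~\ref{thm:summary}. Your high-level plan (pass to a critical subgraph, find a finite list of reducible configurations, show reductions do not decrease a weight, close with discharging) is indeed the framework used there. But there is a genuine gap in the specific weight function you chose, and it is not just a matter of tuning the coefficient~$1715$.

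You set $w(H):=|V(H)|-1715|C_H|$ and require $w(G')\ge w(G)$. Almost all of the natural reductions you list keep the precolored cycle fixed ($C'=C$) while strictly decreasing the number of vertices: deleting a degree-three vertex and identifying two of its neighbours drops $|V|$ by two; contracting a short path of degree-three vertices drops it further. For any such step $w(G')=w(G)-m$ with $m\ge 1$, so condition~(iii) fails irrespective of the coefficient. Reductions that do shrink $C$ are very limited in a critical graph (no chords), and the cut-type reductions produce \emph{two} pieces whose combined boundary $|C_1|+|C_2|$ typically \emph{exceeds} $|C|$, which again breaks a vertex-count weight that is linear in $|C|$. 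The proof in~\cite{trfree2}, and all the machinery imported here, instead uses a face-based weight $w(G,\{R\})=\sum_{f}s(|f|)$ with the deliberately nonlinear function $s$ defined before Theorem~\ref{thm:summary} ($s(5)=4/4113$, \dots, $s(l)=l-8$ for $l\ge 9$). The near-zero values of $s$ at small face lengths and its eventual slope~$1$ are exactly what make the cut reductions and vertex-deletion reductions weight-nondecreasing (this is the content of Theorem~\ref{thm:summary}(d)–(e)), and they also control the "leakage'' from the ring that your discharging paragraph gestures at. Once one has the face-weight bound $w(G,\{R\})\le s(|R|-3)+s(5)$ of Theorem~\ref{thm:diskgirth5}, the vertex bound of Theorem~\ref{thm:maindisk} falls out via Euler's formula ($|V|=1+|R|/2+\sum_f(|f|-2)/2$ and $(|f|-2)/2\le \tfrac{3}{2}\,s(|f|)/s(5)$ for $|f|\ge 5$), which is where a constant of the order of $1715$ comes from. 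So the missing idea is not "which coefficient'' but "measure faces, not vertices, with a concave $s$, and convert to vertices only at the very end.''
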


Further results of \cite{trfree2} needed in this paper are summarized in Section~\ref{sec-summary}.

\section{Definitions}

In this section, we give a few basic definitions.
All graphs in this paper are finite and simple, with no loops or parallel edges.

A \emph{surface}
is a compact connected $2$-manifold with (possibly null) boundary.  Each component of the boundary
is homeomorphic to the circle, and we call it a \emph{cuff}.  For non-negative integers $a$, $b$ and $c$,
let $\Sigma(a,b,c)$ denote the surface obtained from the sphere by adding $a$ handles, $b$ crosscaps and
removing interiors of $c$ pairwise disjoint closed discs.  A standard result in topology shows that
every surface is homeomorphic to $\Sigma(a,b,c)$ for some choice of $a$, $b$ and $c$.
Note that $\Sigma(0,0,0)$ is a sphere, $\Sigma(0,0,1)$ is a closed disk, $\Sigma(0,0,2)$ is a cylinder,
$\Sigma(1,0,0)$ is a torus, $\Sigma(0,1,0)$ is a projective plane and $\Sigma(0,2,0)$ is a Klein bottle.
The \emph{Euler genus} $g(\Sigma)$ of the surface $\Sigma=\Sigma(a,b,c)$ is defined as $2a+b$.
For a cuff $C$ of $\Sigma$, let $\widehat{C}$ denote an open disk with boundary $C$ such that $\widehat{C}$ is disjoint from $\Sigma$, and let $\Sigma+\widehat{C}$ be
the surface obtained by gluing $\Sigma$ and $\widehat{C}$ together, that is, by closing $C$ with a patch.
Let $\widehat{\Sigma}=\Sigma+\widehat{C_1}+\ldots+\widehat{C_c}$, where $C_1$, \ldots, $C_c$ are the cuffs of $\Sigma$,
be the surface without boundary obtained by patching all the cuffs.

Consider a graph $G$ embedded in the surface $\Sigma$; when useful, we identify $G$ with the topological
space consisting of the points corresponding to the vertices of $G$ and the simple curves corresponding
to the edges of $G$.  We say that the embedding is \emph{normal} if every cuff of $\Sigma$ is equal to a cycle in $G$,
and we call such a cycle a \emph{ring}.
Throughout the paper, all graphs are embedded normally.
A \emph{face} $f$ of $G$ is a maximal arcwise-connected subset of $\Sigma-G$.
We write $F(G)$ for the set of faces of $G$.
The boundary of a face is equal to a union of closed walks of $G$, which we call the \emph{boundary walks} of $f$.  

Consider a ring $R$.  If $R$ is a triangle and at most one vertex of $R$ has degree greater than two in $G$, we say that $R$
is a \emph{vertex-like ring}.  A ring with only vertices of degree two is \emph{isolated}.  For a vertex-like ring $R$ that is
not isolated, the \emph{main} vertex of $R$ is its vertex of degree greater than two.
A vertex $v$ of $G$ is a \emph{ring vertex} if $v$ is contained in a ring (i.e., $v$ is drawn in the boundary of $\Sigma$), 
and $v$ is \emph{internal} otherwise.  
A cycle $K$ in $G$ is \emph{separating} or \emph{separates the surface}
if $\widehat{\Sigma}-K$ has at least two components, 
and $K$ is \emph{non-separating} otherwise.
A cycle $K$ is \emph{contractible} if there exists a closed disk $\Delta\subseteq \Sigma$ with boundary equal to $K$.
A cycle $K$ \emph{surrounds the cuff $C$} if $K$ is not contractible in $\Sigma$, but it is contractible in $\Sigma+\widehat{C}$.
We say that $K$ \emph{surrounds a ring $R$} if $K$ surrounds the cuff incident with $R$.

Let $G$ be a graph embedded in a surface $\Sigma$, let the embedding be
normal, and let $\cal R$ be the set of rings of this embedding.
In those circumstances we say that $G$ is a \emph{graph in $\Sigma$
with rings $\cal R$.}
Furthermore, some vertex-like rings are designated as \emph{weak vertex-like rings}.  

For a vertex-like ring $R$, we define the \emph{length} of $R$ as $|R|=0$ if $R$ is weak and $|R|=1$ otherwise.
For a ring $R$ that is not vertex-like, the \emph{length} $|R|$ of $R$ is the number of vertices of $R$.
For a face $f$, by $|f|$ we mean the sum of the lengths of the boundary walks of $f$ (in particular, if an edge
appears twice in the boundary walks, it contributes $2$ to $|f|$).
For a set of rings $\RR$, let us define $\ell({\RR})=\sum_{R\in\RR} |R|$.

Let $G$ be a graph with rings $\cal R$.  Let $H=\bigcup {\cal R}$ and let $H'$ be a (not necessarily induced) subgraph of $G$ obtained from $H$
by, for each weak vertex-like ring $R$, removing the main vertex and one of the non-main vertices of $R$
(or by removing two vertices of $R$ if $R$ has no main vertex),
so that $H'$ intersects $R$ in exactly one non-main vertex.  A {\em precoloring} $\psi$ of $\cal R$ is a 
$3$-coloring of the graph $H'$.  
A precoloring of $\cal R$ {\em extends to a $3$-coloring of $G$}
if there exists a $3$-coloring $\phi$ of $G$ such that $\phi(v)=\psi(v)$ for every $v\in V(H')$.
The graph $G$ is {\em $\cal R$-critical} if $G\neq H$ and for every proper subgraph
$G'$ of $G$ that contains $H$, there exists a precoloring of ${\cal R}$ that extends
to a $3$-coloring of $G'$, but not to a $3$-coloring of $G$.  For a precoloring $\kappa$ of $\cal R$
the graph $G$ is {\em $\kappa$-critical} if $\kappa$ does not extend to a $3$-coloring of $G$,
but it extends to a $3$-coloring of every proper subgraph of $G$ that contains $\cal R$.

Let us remark that if $G$ is $\kappa$-critical for some $\kappa$, then it is $\cal R$-critical,
but the converse is not true (for example, consider a graph consisting of a single ring with two chords).
On the other hand, if $\kappa$ is a precoloring of the rings of $G$ that does not extend to a $3$-coloring of $G$, then
$G$ contains a (not necessarily unique) $\kappa$-critical subgraph.

Weak vertex-like rings are just a technical device that we need at one point in the proof.
Fortunately, we can usually get by without any special considerations of weak rings,
due to the following observation.

\begin{lemma}\label{lemma-unweak}
Let $G$ be a graph embedded in a surface with rings $\RR$.  Let $\RR'$ be the same set of rings as $\RR$,
except that no vertex-like ring of $\RR'$ is designated to be weak.  If $G$ is $\RR$-critical, then $G$ also is $\RR'$-critical.
\end{lemma}
\begin{proof}
Let $G'$ be a proper subgraph of $G$ that contains $\RR'$.  Since $G$ is $\RR$-critical, there exists a precoloring $\psi$
of $\RR$ that extends to a $3$-coloring $\phi$ of $G'$, but does not extend to $G$.  Let $\psi'$ be the restriction of $\phi$ to $\bigcup\RR'$.
Then $\psi'$ gives a precoloring of $\RR'$ that extends to a $3$-coloring of $G'$ (namely, $\phi$), but does not extend to $G$.
\end{proof}

Let $G$ be a graph embedded in a disk with one ring $R$ of length $l\ge5$.
We say that $G$ is {\em exceptional} if it satisfies one of the conditions below (see Figure~\ref{fig-except}):

\begin{figure}
\begin{center}\includegraphics{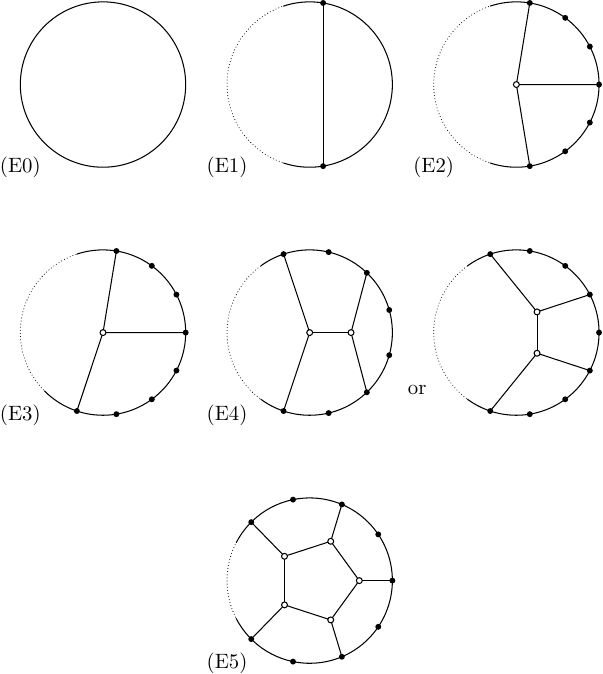}\end{center}
\caption{Exceptional graphs.}
\label{fig-except}
\end{figure}

\begin{itemize}
\item[(E0)] $G=R$,
\item[(E1)] $l\ge 8$ and $E(G)-E(R)=1$,
\item[(E2)] $l\ge 9$, $V(G)-V(R)$ has exactly one vertex of degree three,
and the faces of $G$ have lengths $5,5,l-4$,
\item[(E3)] $l\ge 11$, $V(G)-V(R)$ has exactly one vertex of degree three,
and the faces of $G$ have lengths $5,6,l-5$,
\item[(E4)]  $l\ge 10$, $V(G)-V(R)$ consists of two adjacent degree three vertices,
and the faces of $G$ have lengths $5,5,5,l-5$,
\item[(E5)] $l\ge 10$, $V(G)-V(R)$ consists of five degree three vertices forming
a facial cycle of length five,
and the faces of $G$ have lengths $5,5,5,5,5,l-5$.
\end{itemize}
\noindent We say that $G$ is {\em very exceptional} if it satisfies
(E0), (E1), (E2) or (E3).

\section{Definitions and results from \cite{trfree2}}
\label{sec-summary}

Let $G$ be a graph in a surface $\Sigma$ with rings $\RR$.
A face is {\em open $2$-cell} if it is homeomorphic to an open disk.
A face is {\em closed $2$-cell} if it is open $2$-cell and
bounded by a cycle. A face $f$ is {\em semi-closed $2$-cell} if it is open $2$-cell,
and if a vertex $v$ appears more than once in the boundary walk of $f$, then it appears exactly twice,
$v$ is the main vertex of a vertex-like ring $R$ and the edges of $R$ form part of the boundary walk of $f$.
A face $f$ is {\em omnipresent} if it is not open $2$-cell and
each of its boundary walks is a cycle bounding
a closed disk $\Delta\subseteq \widehat{\Sigma}\setminus f$ containing exactly one ring.
We say that $G$ has an {\em internal $2$-cut} if there exist
sets $A,B\subseteq V(G)$ such that $A\cup B=V(G)$, $|A\cap B|=2$,
$A\setminus B\ne\emptyset\ne B\setminus A$, $A$ includes all vertices of $\RR$,
and no edge of $G$ has one end in $A\setminus B$ and the other in $B\setminus A$.

We wish to consider the following conditions that the triple
 $G,\Sigma,{\RR}$ may or may not satisfy:
\begin{itemize}
\item[(I0)] every internal vertex of $G$ has degree at least three,
\item[(I1)] $G$ has no even cycle consisting of internal vertices of
degree three,
\item[(I2)] $G$ has no cycle $C$ consisting of internal vertices of degree
three, together with two distinct adjacent vertices $u,v\in V(G)-V(C)$ such that both
$u$ and $v$ have a neighbor in $C$,
\item[(I3)] every face of $G$ is semi-closed $2$-cell and has length at least $5$,
\item[(I4)] if a path of length at most two has both ends in $\bigcup\RR$,
then it is a subgraph of $\bigcup\RR$,
\item[(I5)] no two vertices of degree two in $G$ are adjacent, unless they belong to a vertex-like ring,
\item[(I6)] if $\Sigma$ is the sphere and $|{\RR}|=1$, or if
$G$ has an omnipresent face, then $G$ does not contain an internal $2$-cut,
\item[(I7)] the distance between every two distinct members of $\RR$
is at least four,
\item[(I8)] every cycle in $G$ that does not separate the surface has length at least seven,
\item[(I9)] if a cycle $C$ of length at most $9$ in $G$ bounds an open disk $\Delta$ in $\Sigma$,
then $\Delta$ is a face, a union of a $5$-face and a $(|C|-5)$-face, or $C$ is a $9$-cycle and $\Delta$ consists of three $5$-faces
intersecting in a vertex of degree three.
\end{itemize}

Some of these properties are automatically satisfied by critical graphs; see \cite{trfree2} for the proofs
of the following observations.

\begin{lemma}
\mylabel{lem:i012}
Let $G$ be a graph in a surface $\Sigma$ with rings $\RR$.
If $G$ is $\RR$-critical, then it satisfies {\rm (I0), (I1) and (I2)}.
\end{lemma}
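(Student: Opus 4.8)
The plan is to prove each of the three properties (I0), (I1), (I2) by the standard criticality argument: assume the property fails, locate an offending structure, and use it to construct a proper subgraph $G'\supseteq\bigcup\RR$ such that every precoloring of $\RR$ extending to $G'$ already extends to $G$, contradicting $\RR$-criticality of $G$. In each case the key point is that the discarded piece can be "colored for free" no matter what the rest of the graph looks like.

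For (I0): suppose some internal vertex $v$ has degree at most two. If $v$ is isolated or has degree one, deleting it (and the pendant edge) yields $G'$; any $3$-coloring of $G'$ extends to $G$ since $v$ has at most one constrained neighbor. If $v$ has degree exactly two, with neighbors $x,y$, let $G'=G-v$; given a proper $3$-coloring of $G'$, the vertex $v$ sees at most two colors among $\{x,y\}$, so a third color is available, and we extend. In all cases $G'$ is a proper subgraph containing all rings (since $v$ is internal), so $\RR$-criticality is violated. For (I1): suppose $C=v_1v_2\cdots v_{2k}$ is an even cycle of internal vertices of degree three. Let $G'=G-E(C)$. Given a proper $3$-coloring $\psi$ of $G'$, each $v_i$ has exactly one neighbor off $C$ (call its color $c_i$); I must recolor along $C$ avoiding both neighbors on $C$ and the forbidden color $c_i$. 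This is exactly the problem of properly coloring an even cycle from lists of size... one needs that a $2k$-cycle with one forbidden color at each vertex is $3$-list-colorable in the appropriate sense; the relevant fact is that an even cycle where each vertex has a list of size $2$ (the two colors other than $c_i$) is $2$-choosable, so a proper coloring of $C$ respecting all these constraints exists. Hence $\psi$ extends to $G$. For (I2): suppose $C$ is a cycle of internal degree-three vertices and $u,v\notin V(C)$ are adjacent, each with a neighbor on $C$. Delete the edge $uv$ together with the edges of $C$; in the resulting graph each $v_i\in V(C)$ again has a prescribed forbidden color from its unique off-$C$ neighbor, except possibly the one or two vertices adjacent to $u$ or $v$, for which the constraint is relaxed because $u,v$ are currently free; a similar $2$-choosability / parity argument on the cycle $C$ (now with at least one vertex whose list has size $3$) produces a valid recoloring of $C$, and then $u$ and $v$, having at most two forbidden colors between their remaining neighbors, can also be colored.

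The main obstacle — and the only genuinely non-routine step — is the list-coloring argument on the cycle $C$ in cases (I1) and (I2): one must check that the induced list-coloring instance is always solvable. For (I1) the lists have size exactly $2$ and the cycle is even, which is precisely the boundary case of Thomassen's characterization of $2$-choosable graphs (even cycles are $2$-choosable), so it works; for (I2) the removal of $uv$ frees up enough room that at least one vertex of $C$ gets a size-$3$ list (or the cycle becomes a path problem), which is trivially colorable. I would also need the small bookkeeping that $G'$ genuinely contains all of $\bigcup\RR$ and is a proper subgraph — immediate since the deleted vertices/edges are internal and nonempty. Since the excerpt states these results are proved in \cite{trfree2}, I would in the write-up simply cite that reference; but the proof sketch above is the natural self-contained argument.
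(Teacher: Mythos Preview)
Your arguments for (I0) and (I1) are correct and standard (the paper itself gives no proof here, deferring to \cite{trfree2}, so there is nothing to compare against beyond the natural argument you supply). The gap is in your treatment of (I2).

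You propose to take $G'=G-uv-E(C)$ and then claim that ``$u,v$ are currently free'' and that ``$u$ and $v$, having at most two forbidden colors between their remaining neighbors, can also be colored''. This is not justified: the hypothesis of (I2) places no degree restriction on $u$ and $v$ --- only the vertices of $C$ are required to be internal of degree three. So $u$ (and likewise $v$) may have arbitrarily many neighbors outside $C\cup\{v\}$, all of whose colors are already fixed by $\psi$, and there is no reason a color remains available for $u$. Concretely, if $\psi(u)=\psi(v)$ in $G'$ (which is allowed once $uv$ is deleted), you are stuck: you cannot recolor either of $u,v$, and then all size-$2$ lists on $C$ could be identical, killing the odd-cycle case.

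The fix is simple and is presumably what \cite{trfree2} does: set $G'=G-E(C)$ and do \emph{not} delete $uv$. Then any proper $3$-coloring $\psi$ of $G'$ already has $\psi(u)\neq\psi(v)$. Let $v_a,v_b\in V(C)$ be the (necessarily distinct) vertices whose unique off-$C$ neighbors are $u$ and $v$ respectively; their size-$2$ lists are $\{1,2,3\}\setminus\{\psi(u)\}$ and $\{1,2,3\}\setminus\{\psi(v)\}$, which are different. A cycle with $2$-lists that are not all identical is always $L$-colorable (color greedily starting from an edge where the lists differ), so $C$ can be recolored regardless of its parity, and $\psi$ extends to $G$. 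This is the missing idea: the role of the edge $uv$ is precisely to force two of the lists on $C$ to differ, not to provide slack at $u$ and $v$ themselves.
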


\begin{lemma}
\mylabel{lem:crit3conn}
Let $G$ be a graph in a surface $\Sigma$ with rings $\RR$.  Suppose that each component of $G$ is a planar graph
containing exactly one of the rings.  If $G$ is ${\RR}$-critical and contains no non-ring triangle,
then each component of $G$ is $2$-connected and $G$ satisfies (I6).
\end{lemma}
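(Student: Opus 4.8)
The plan is to prove the two assertions --- that each component of $G$ is $2$-connected, and that $G$ satisfies (I6) --- in that order, since the second uses the first. For the first, suppose some component $H$ of $G$, containing the unique ring $R\in\RR$ with $R\subseteq H$, is \emph{not} $2$-connected. Using that every internal vertex of $G$ has degree at least three (Lemma~\ref{lem:i012}) one quickly disposes of the cases $|V(H)|\le 2$, which force $H=R$; so $H$ has a cut-vertex $v$, and we may write $H=H_1\cup H_2$ with $H_1\cap H_2=\{v\}$, with $V(H_i)\setminus\{v\}\ne\emptyset$, and with $V(R)\subseteq V(H_1)$ (possible since $R$, hence $R-v$, is connected). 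I would then delete $V(H_2)\setminus\{v\}$: the resulting $G'$ is a proper subgraph of $G$ containing $\RR$, so by $\RR$-criticality there is a precoloring $\phi$ of $\RR$ extending to a $3$-colouring $\psi'$ of $G'$ but not to $G$. Now $H_2$ is planar (a subgraph of the planar graph $H$) and triangle-free --- any triangle in $H_2$ would be a non-ring triangle of $G$, since $V(R)\cap V(H_2)\subseteq\{v\}$ --- so by Gr\"otzsch's theorem (Theorem~\ref{grotzsch}) it has a proper $3$-colouring, which after permuting colours agrees with $\psi'$ at $v$. Gluing the two colourings (they meet only at $v$, and no edge joins $V(H_1)\setminus\{v\}$ to $V(H_2)\setminus\{v\}$) yields a $3$-colouring of $G$ extending $\phi$, a contradiction. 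Hence every component of $G$ is $2$-connected.

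For (I6), suppose $G$ has an internal $2$-cut $A,B$ with $A\cap B=\{u,w\}$ and $\RR\subseteq A$; I must show that neither alternative in the hypothesis of (I6) holds. The first step is to localise the cut. If $\Sigma$ is the sphere with $|\RR|=1$ then $G$ is connected and I set $G_0:=G$; if $G$ has an omnipresent face $f$, then by the definition of an omnipresent face $G$ is the disjoint union of planar subgraphs drawn in the pairwise disjoint disks cut off by the boundary walks of $f$, each subgraph containing exactly one ring, together with isolated vertex rings, and a short argument (substituting a subgraph of any non-$R_i$-critical piece back into $G$, contradicting $\RR$-criticality) shows each nontrivial piece is critical with respect to its ring and --- being triangle-free and planar if it has a vertex ring --- has a facial ring. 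Since the components of $G$ are now known to be $2$-connected, the cut $\{u,w\}$ cannot straddle two pieces (that would make one of $u,w$ a cut-vertex of a piece), so $B-A$ together with $\{u,w\}$ lies inside a single piece $G_0$ with facial ring $R_0$, all other pieces lying in $A$; restricting $A,B$ to $V(G_0)$ gives an internal $2$-cut of the connected, $2$-connected, planar, $R_0$-critical graph $G_0$, with $V(R_0)$ on the $A$-side (here one uses that $R_0$ is a cycle of length at least three to see $A-B\cap V(G_0)\ne\emptyset$). So in both cases it suffices to handle $G_0$.

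Write $H:=G_0[B\cap V(G_0)]$. It is planar and triangle-free (the only ring edge it can contain is $uw$ itself, and a single edge is not a triangle), and because $G_0$ is $2$-connected and planar and $\{u,w\}$ is a $2$-cut, the embedding may be chosen so that $H$ lies in a closed disk whose boundary is a face of $H$ incident with both $u$ and $w$; in particular $u$ and $w$ are on a common face of $H$. The \emph{key claim} I would prove is that $H$ admits a proper $3$-colouring in which $u,w$ receive distinct colours, and (when $uw\notin E(G_0)$) also one in which they receive equal colours; by permuting colours $H$ then realises every admissible colour pair on $(u,w)$. Granting this, I apply $\RR$-criticality of $G$ to the proper subgraph $G[A]$ (it contains $\RR$, as $V(R_0)\subseteq A$ and $B-A\ne\emptyset$): a witnessing precoloring $\phi$ extends to a $3$-colouring of $G[A]$, whose colours on $u,w$ are then matched by a $3$-colouring of $H$, and the two combine (no edge joins $A-B$ to $B-A$) to a $3$-colouring of $G$ extending $\phi$ --- a contradiction. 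Hence $G$ has no internal $2$-cut under either hypothesis of (I6).

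The step I expect to be the main obstacle is precisely the key claim about $H$: a triangle-free plane graph with two co-facial vertices cannot force those vertices to be equal, nor to be distinct. For the ``distinct'' direction one adds the edge $uw$ through the common face; the resulting plane graph has \emph{every} triangle passing through $uw$ (since $H$ is triangle-free), so the point is that no plane graph all of whose triangles share a common edge can be $4$-chromatic, which I would attack via a minimal non-$3$-colourable subgraph together with Euler's formula and the minimum-degree bound for $4$-critical graphs (equivalently, via the known fact that a plane graph with few triangles is $3$-colourable). The ``equal'' direction is dual: identify $u$ and $w$ across the common face and analyse the triangles that can be created --- they arise only from $3$-paths between $u$ and $w$ --- again reducing to Gr\"otzsch's theorem. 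Everything else is bookkeeping; this colouring-flexibility statement for triangle-free plane graphs with two co-facial vertices is the real content.
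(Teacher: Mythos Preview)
Your overall architecture is right: prove $2$-connectedness of each component by cutting at a cut-vertex and $3$-colouring the ring-free side via Gr\"otzsch, then for (I6) localise the internal $2$-cut to a single $2$-connected planar piece $G_0$ with one facial ring, and finish by showing the $B$-side $H$ can be $3$-coloured matching any admissible pair on $\{u,w\}$. (The paper defers the proof to~\cite{trfree2}, and that argument is built on the same criticality-plus-Gr\"otzsch skeleton, using the disk classification of $R$-critical graphs.)

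The gap is in your ``key claim'', and both of your sketched attacks fail as written.

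For the \emph{distinct} direction you add $uw$ and assert that a plane graph with all triangles through one edge is $3$-colourable. That statement is true, but your two proposed proofs do not establish it. First, $H+uw$ can have arbitrarily many triangles: take $H=K_{2,k}$ with $u,w$ the two high-degree vertices, so the Aksenov/Gr\"unbaum ``at most three triangles'' theorem does not apply. Second, the Euler/minimum-degree count does not close: in a $4$-critical planar subgraph $H'$ the edge $uw$ lies on at most two faces, so at most two faces are triangles and the rest have length $\ge 4$; this yields only $m\le 2n-3$, which together with $m\ge 3n/2$ (or even Kostochka--Yancey $m\ge(5n-2)/3$) gives merely $n\ge 6$ or $n\ge 7$, no contradiction.

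For the \emph{equal} direction you identify $u$ and $w$ and say the created triangles come from $3$-paths $u\text{--}a\text{--}b\text{--}w$, then ``reduce to Gr\"otzsch''. But such $3$-paths can certainly exist in $H$, so $H/uw$ need not be triangle-free and Gr\"otzsch does not apply. The triangles of $H/uw$ all contain the merged vertex, but that alone does not force $3$-colourability (the wheel $W_5$ shows a planar graph with all triangles through one vertex can be $4$-chromatic).

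The clean way to prove your key claim is to invoke the precolouring-extension strengthening of Gr\"otzsch: in a triangle-free plane graph every proper $3$-colouring of a facial $(\le\!5)$-cycle extends to the whole graph (equivalently, there is no $R$-critical triangle-free plane graph with $|R|\le 5$; this is part of Theorem~\ref{thm-planechar}). Given $u,w$ on a common face of $H$ with $uw\notin E(H)$, add two new vertices $z_1,z_2$ inside that face, each adjacent to both $u$ and $w$; the resulting graph is still planar and triangle-free, and $u z_1 w z_2$ can be embedded as a $4$-face. Now precolour this $4$-face with $u\mapsto c_u$, $w\mapsto c_w$ (any pair) and extend. Restricting to $H$ gives the colouring you need. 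This replaces both of your sketches at once and completes your argument.
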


Let $G$ be a graph in a surface $\Sigma$ with rings $\RR$,
and let $P$ be a path of length
at least one and at most four with ends
$u,v\in V(\RR)$ and otherwise disjoint from $\RR$.
We say that $P$ is {\em allowable} if
\begin{itemize}
\item $u,v$ belong to the same ring of $\RR$, say $R$,
\item $P$ has length at least three,
\item there exists a subpath $Q$ of $R$ with ends $u,v$
such that $P\cup Q$ is a cycle of length at most eight
that bounds an open disk $\Delta\subset \Sigma$,
\item if $P$ has length three, then $P\cup Q$ has length five
and $\Delta$ is a face of $G$, and
\item if $P$ has length four, then $\Delta$ includes at
most one edge of $G$, and if it includes one, then that edge joins
the middle vertex of $P$ to the middle vertex of the path $Q$, which also has length four.
\end{itemize}

\noindent
We say that $G$ is {\em well-behaved} if every path $P$ of length 
at least one and at most four with ends 
$u,v\in V(\RR)$ and otherwise disjoint from
$\RR$ is allowable.

Let $M$ be a subgraph of $G$.
A subgraph $M\subseteq G$ \emph{captures $(\le\!4)$-cycles} if $M$ contains all
cycles of $G$ of length at most $4$ and furthermore, $M$ is either null or has minimum
degree at least two.

Throughout the rest of the paper, let $\epsilon=2/4113$ and
let $s:\{5,6,\ldots\}\to{\mathbb R}$ be the function defined by
$s(5)= 4/4113$, 
$s(6)=72/4113$, 
$s(7)=540/4113$,
$s(8)=2184/4113$ and $s(l)=l-8$ for $l\ge9$.
Based on this function, we assign weights to the faces.
Let $G$ be a graph embedded in $\Sigma$ with rings $\RR$ such that every open $2$-cell face of
$G$ has length at least $5$.
For a face $f$ of $G$, we define $w(f)=s(|f|)$ if $f$ is open $2$-cell
and $w(f)=|f|$ otherwise.  We define $w(G,{\cal R})$ as the sum of $w(f)$ over all faces
$f$ of $G$.

\bigskip

Before we proceed further, let us give an intuition behind the following definitions and especially
behind the key Theorem~\ref{thm:summary} that we are about to state.  Let $G$ be a graph of girth at least $5$ in a surface $\Sigma$ with rings $\RR$.
We aim to prove that if $G$ is $\RR$-critical, then its size is bounded by a linear function of the genus of $\Sigma$ and the number and
the lengths of the rings (if $G$ has no rings, then $G$ is $4$-critical, and we obtain Theorem~\ref{thm:mainsurf}; but our proof
method needs the stronger statement to deal with issues relating to possible short non-contractible cycles in $G$).
More precisely, we will show that $w(G,\RR)$ is bounded.

\begin{figure}
\includegraphics[width=12cm]{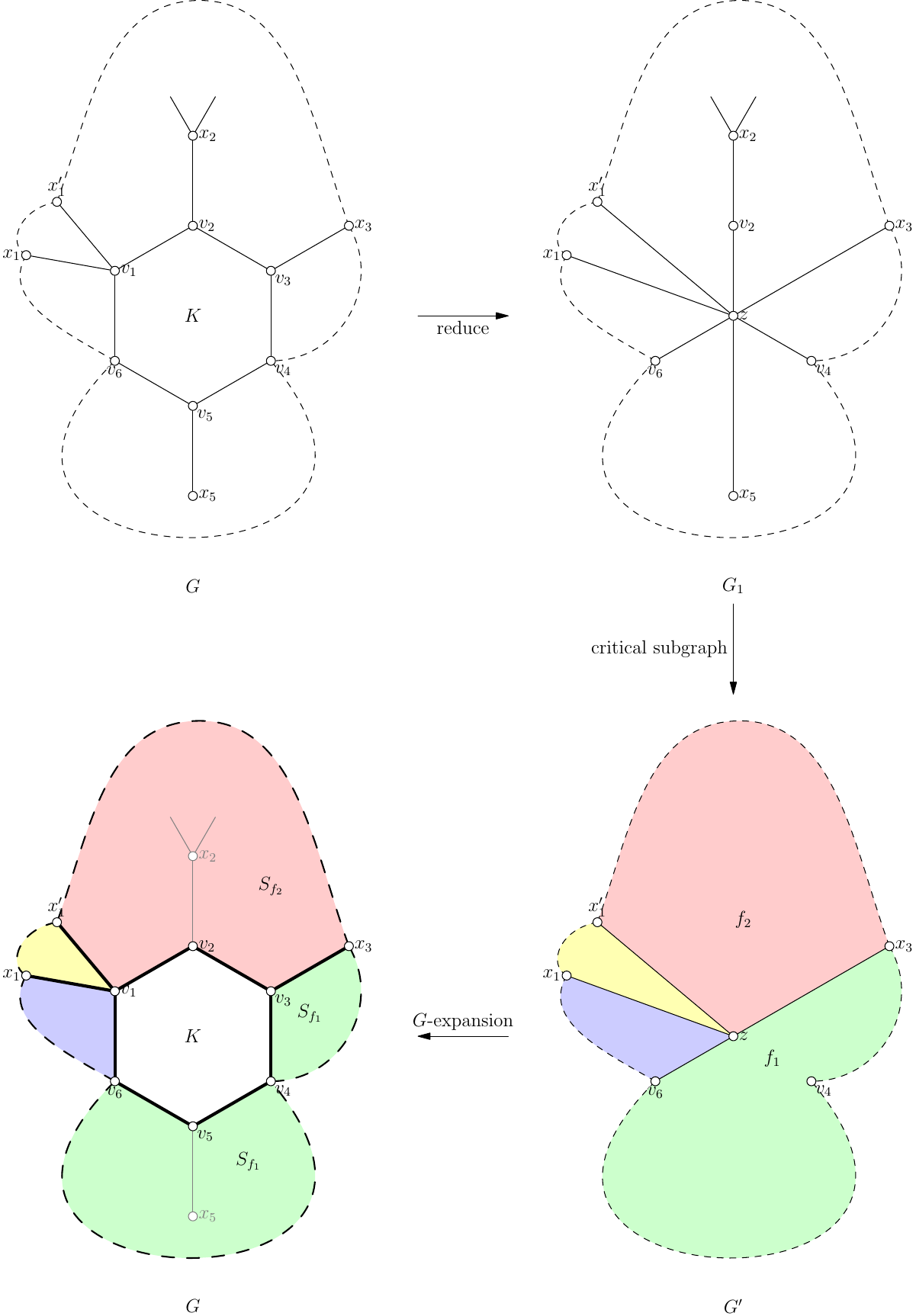}
\caption{The main idea.}\label{fig-mainstep}
\end{figure}

The proof is by induction on the complexity of the surface $\Sigma$ and the size of $G$ (formalized by the definition of the ordering $\prec$ in Section~\ref{sec-surfaces}).
Let us illustrate the main idea using a simplified example,
see Figure~\ref{fig-mainstep} for reference.  In~\cite{trfree2}, we identified a number of reducible
configurations that appear in any sufficiently large graph of girth at least $5$ embedded in a fixed surface; one of them is a face bounded by a
$6$-cycle $K=v_1v_2\ldots v_6$ such that $v_2$ and its neighbor $x_2$ outside of $K$ have degree three (and some more technical assumptions hold).  Given such a reducible
configuration, we can reduce $G$ to obtain a graph $G_1$ of girth at least $5$ embedded in $\Sigma$ with the same rings $\RR$; in our example, this is
achieved by identifying vertices $v_1$, $v_3$, and $v_5$ to a single vertex $z$.  Note that every $3$-coloring of the reduced graph $G_1$ extends to a $3$-coloring of $G$,
but $G_1$ is not necessarily $\RR$-critical (e.g., in our example, the vertex $v_2$ has degree two in $G_1$, and thus it is irrelevant
for $3$-colorability).  To be able to apply induction, we consider an $\RR$-critical subgraph $G'$ of $G_1$.  By the induction hypothesis, we obtain
a bound on $w(G',\RR)$, and thus we only need to show that $w(G,\RR)\le w(G',\RR)$.

To each face $f$ of $G'$, we can assign a set of faces of $G$ in the following natural way: Let $J'_f$ be the boundary of $f$.  Let $J_f$ be the subgraph of
$G$ obtained from $J'_f$ by undoing the reduction (in our case, if $J'_f$ contains $z$, we replace it either by one of the vertices $v_1$, $v_3$, $v_5$,
or by one of the paths $v_1v_2v_3$, $v_3v_4v_5$, $v_5v_6v_1$ as appropriate depending on the edges of $J'_f$ incident with $z$).  Now, $J_f$ has one or more
faces corresponding to $f$ (it may have more than one, see e.g. the face $f_1$ in Figure~\ref{fig-mainstep}), let the set of these faces be denoted
by $S_f$.  For each face $g$ of $G$ except for the $6$-face bounded by $K$, there exists a unique face $f$ of $G'$ such that $g$ is a subset of a face of $S_f$
(in the terms we are going to define below, $\{(J_f,S_f):f\in F(G')\}$ is a cover of $G$ by faces of $G'$).  Let $G[S_f]$ denote the subgraph of $G$ drawn in the closure
of the union of the faces of $S_f$.  To prove $w(G,\RR)\le w(G',\RR)$, we need to argue that for each $f\in F(G')$, the total weight of the faces of $G[S_f]$ is at most $w(f)$
(and in fact, that some of the inequalities are far from being tight, thus paying for the weight of the face bounded by $K$ that is not accounted for otherwise---this
difference is lower-bounded by the \emph{contribution} of a face of $G'$ as defined below).

To do so, we again apply induction: it is easy to see that since $G$ is $\RR$-critical, its subgraph $G[S_f]$ is critical with respect to $J_f$.
There is a caveat: we may not be able to directly embed $G[S_f]$ in a surface (or surfaces, if $|S_f|>1$) with rings corresponding to $J_f$; e.g., suppose that $\Sigma$ is
a torus, and $J_f$ is the union of two homotopic non-contractible cycles intersecting in one vertex $w$, and $S_f$ consists of the face $h$ of $J_f$ homeomorphic to the open cylinder.
Then the two boundary walks of $h$ intersect, but distinct rings of a graph must be vertex-disjoint.  However, we can easily overcome this difficulty by splitting the vertex
$w$ into two vertices, so that $J_f$ becomes a disjoint union of cycles and $G[S_f]$ can be naturally embedded in a cylinder with two rings (which is the surface
we below denote by $\Sigma_h$).  This splitting operation is formalized below as ``$G$-expansion of $S_f$''.

Anyway, let us ignore this subtlety for the moment. The bound on $w(G',\RR)$ is formulated in such a way that it ensures that $G[S_f]$ is embedded in at most as complex
surface as $\Sigma$, and thus we can apply induction to it.  If the sum of the lengths of the rings of $G[S_f]$ is the same as $|f|$, the induction directly
shows that the sum of weighs of faces of $G[S_f]$ is at most $w(f)$ (and actually smaller unless $S_f=\{f\}$).  However, this does not need to be the case for the
faces touching the reduced configuration; in our example, the face in $S_{f_2}$ has length $|f_2|+2$ (in the terms defined below, $f_2$ has elasticity $2$).
A more detailed accounting beyond the scope of this brief description is needed for such faces.

In~\cite{trfree2}, we did all the work establishing the existence of the reducible configurations and analyzing their reductions, and thus
in this paper we do not need to deal with the particulars.  Instead, we only use Theorem~\ref{thm:summary} below which we proved in~\cite{trfree2},
showing the existence of the critical subgraph $G'$ of the reduced graph and of the cover of $G$ by its faces. It also establishes the bounds on the contributions
and elasticities of the faces that we need to finish the argument.

\bigskip

Let us now proceed with the formal definitions.
Let $\Pi$ be a surface with boundary and $c$ a simple curve intersecting the boundary of $\Pi$ exactly in its ends.
The topological space obtained from $\Pi$ by cutting along $c$ (i.e., duplicating every point of $c$ and turning
both copies into boundary points) is a union of at most two surfaces.  If $\Pi_1,\ldots, \Pi_k$ are obtained from $\Pi$ by repeating this construction,
we say that they are \emph{fragments} of $\Pi$.

Consider a graph $H$ embedded in a surface $\Pi$ with rings $\QQ$, and let $f$ be a face of $H$.
There exists a unique surface whose interior is homeomorphic to $f$, which we denote by $\Pi_f$.
Note that the cuffs of $\Pi_f$ correspond to the facial walks of $f$.

Let $G$ be a graph embedded in $\Sigma$ with rings $\RR$.
Let $J$ be a subgraph of $G$ and let $S$ be a subset of faces of $J\cup\bigcup\RR$ such that
$J$ is equal to the union of the boundaries of the faces in $S$.
We define $G[S]$ to be the subgraph of $G$ consisting of $J$
and all the vertices and edges drawn inside the faces of $S$.
Let $C_1,C_2,\ldots,C_k$ be the boundary walks of the faces in $S$.
We would like to view $G[S]$ as a graph with rings $C_1$, \ldots, $C_k$.
However, the $C_i$'s do not necessarily have to be disjoint, and they do not have to be cycles.
To overcome this difficulty, we proceed as follows:
Suppose that $S=\{f_1,\ldots, f_m\}$.  For $1\le i\le m$, let $\Sigma_i$ be
a surface with boundary $B_i$ such that $\Sigma_i\setminus B_i$ is homeomorphic to $f_i$ (i.e., $\Sigma_i$ is homeomorphic to $\Sigma_{f_i}$).
Let $\theta_i:\Sigma_i\setminus B_i\to f_i$ be a homeomorphism that extends to a continuous mapping 
$\theta_i:\Sigma_i\to\overline{f_i}$, where $\overline{f_i}$ denotes the closure of $f_i$.
Let $G_i$ be the inverse image of $G\cap \overline{f_i}$ under $\theta_i$.
Then $G_i$ is a graph normally embedded in $\Sigma_i$.  We say that the set of embedded graphs $\{G_i:1\le i\le m\}$ 
is a {\em $G$-expansion of $S$.}
Note that there is a one-to-one correspondence between the boundary walks of the faces of $S$ and the rings of the graphs
in the $G$-expansion of $S$; however, each vertex of $J$ may be split to several copies.
For $1\le i\le m$, we let ${\RR}_i$ be the set of rings of $G_i$,
where each vertex-like ring $R$ is weak if and only if $R$ is also a weak vertex-like ring of $G$.
We say that the rings in ${\RR}_i$ are the {\em natural rings} of $G_i$.

Let now $G'$ be another $\RR$-critical graph embedded in $\Sigma$ with rings $\RR$.
Suppose that there exists a collection $\{(J_f,S_f):f\in F(G')\}$ of subgraphs $J_f$ of $G$ and sets $S_f$ of faces of $J_f\cup\bigcup\RR$ such that $J_f$
is the union of the boundary walks of the faces of $S_f$, and a set $X\subset F(G)$ such that 
\begin{itemize}
\item for every $f\in F(G')$, the graph $J_f$ is not equal to $\bigcup\RR$,
\item for every $f\in F(G')$, the surfaces embedding the components of the $G$-expansion of $S_f$ are fragments of $\Sigma_f$,
\item for every face $h\in F(G)\setminus X$, there exists unique $f\in F(G')$ such that $h$ is a subset of a member of $S_f$, and
\item if $X\neq \emptyset$, then $X$ consists of a single closed $2$-cell face of length $6$.
\end{itemize}
We say that $X$ together with this collection forms a \emph{cover of $G$ by faces of $G'$}.
We define the \emph{elasticity} $\el(f)$ of a face $f\in F(G')$ to be $\left(\sum_{h\in S_f} |h|\right)-|f|$.

We now want to bound the weight of $G$ by the weight $G'$.
To this end, we define a \emph{contribution} $c(f')$ of a face $f'$ of $G'$ that bounds the difference between the weight of $f'$
and the weight of the corresponding subgraph of $G$.  We only define the contribution in the case that every face of $G'$ is either closed $2$-cell of length at least $5$ or omnipresent.
The contribution $c(f')$ of an omnipresent face $f'$ of $G'$ is defined as follows.  Let $G'_1$, $G'_2$, \ldots, $G'_k$
be the components of $G'$ such that $G'_i$ contains the ring $R_i\in{\RR}$.  If there exist distinct indices $i$ and $j$
such that $G'_i\neq R_i$ and $G'_j\neq R_j$, then $c(f')=1$.  Otherwise, suppose that $G'_i=R_i$ for $i\ge 2$.
If $G'_1$ is very exceptional, then $c(f')=-\infty$.  If $G'_1$ satisfies (E4) or (E5),
then $c(f')=5-\el(f')-5s(5)$, otherwise $c(f')=5-\el(f')+5s(5)$.

For a closed $2$-cell face, the definition of the contribution can be found in \cite{trfree2}; here, we only
use its properties given by the following theorem, which was proved as \cite[Theorem~9.1]{trfree2}.

\begin{theorem}\mylabel{thm:summary}
Let $G$ be a well-behaved graph embedded in a surface $\Sigma$ of Euler genus $g$ with rings $\RR$
satisfying {\rm (I0)--(I9)} and let $M$ be a subgraph of $G$ that captures $(\le\!4)$-cycles.
Assume that $g>0$ or $|\RR|>1$, and that $w(G,\RR)>8g+8|{\cal R}|+(2/3+26\epsilon)\ell(\RR)+20|E(M)|/3-16$.
If $G$ is $\RR$-critical, then there exists an $\RR$-critical graph $G'$ embedded in $\Sigma$ with the same rings $\RR$
such that $|E(G')|<|E(G)|$, every vertex-like ring of $G$ is also vertex-like in $G'$, and the following conditions hold.
\begin{itemize}
\item[(a)] If $G$ has girth at least five, then there exists a set $Y\subseteq V(G')$ of size at most two such that $G'-Y$ has
girth at least five.  
\item[(b)] If $C'$ is a $(\le\!4)$-cycle in $G'$, then $C'$ is non-contractible and $G$ contains a non-contractible
cycle $C$ of length at most $|C'|+3$ such that
\begin{enumerate}
\item at most one ring vertex of $C'$ does not belong to $C$, and if $r\in V(C')\setminus V(C)$ is a ring vertex,
then there exists a path in $G-E(M)$ of length at most three from $r$ to a vertex of $C$,
\item if $C\not\subseteq M$ and $C'$ only contains ring vertices, then $V(C')\subseteq V(C)$,
\item if $C\subseteq M$, then $|C|=|C'|$ and $C\cap \bigcup \RR\subseteq C'$,
\item if $C'$ is a triangle disjoint from the rings
and its vertices have distinct pairwise non-adjacent neighbors in a ring $R$ of length $6$, then
the distance between $C$ and $R$ in $G$ is at most one.
\end{enumerate}
\item[(c)] $G'$ has a face that either is not semi-closed $2$-cell or has length at least $6$.
\item[(d)] There exists $X\subset F(G)$ and a collection $\{(J_f,S_f):f\in F(G')\}$ forming a cover of $G$ by faces of $G'$,
such that $\sum_{f\in F(G')} \el(f)\le 10$,
and if $f$ is an omnipresent face, then $\el(f)\le 5$.
Furthermore, if every face of $G'$ is semi-closed $2$-cell or omnipresent, $G'$ satisfies (I6), and every non-isolated vertex-like ring of $G'$ is also vertex-like in $G$,
then $\sum_{f\in F(G')} c(f)\ge |X|s(6)$.
\item[(e)] If every vertex-like ring of $G'$ is also vertex-like in $G$,
$f\in F(G')$ is semi-closed $2$-cell and $G_1$, \ldots, $G_k$ are the components of the $G$-expansion of $S_f$, where
$S_f$ is as in (d) and for $1\le i\le k$, $G_i$ is embedded in
the disk with one ring $R_i$, then $\sum_{i=1}^k w(G_i,\{R_i\})\le s(|f|)-c(f)$.
\item[(f)] If $G'$ has an omnipresent face, then at least one component of $G'$ is not very exceptional.
\end{itemize}
\end{theorem}
Unfortunately, we made several formulation mistakes in the (b) part of~\cite[Theorem~9.1]{trfree2},
as well as in \cite[Lemma~6.2]{trfree2} from which the part (b) follows.  Hence, in the Appendix,
we give the arguments necessary to establish the corrected formulation.  We also forgot to include part (f),
which however follows from \cite[Lemma~7.3]{trfree2}.  Finally, in the last sentence of (d), we had
``\ldots every vertex-like ring of $G'$ \ldots'' instead of having the constraint apply only to non-isolated vertex-like rings;
this assumption is only used to prove \cite[Lemma~7.1]{trfree2}, and a quick inspection of its proof shows that
it suffices to have the constraint apply only to non-isolated vertex-like rings.

A graph $G$ embedded in a surface $\Sigma$ with rings $\RR$ has {\em internal girth at least five} if every $(\le\!4)$-cycle in $G$
is equal to one of the rings.  
The main result of \cite{trfree2} (Theorem 8.5) bounds the weight of graphs embedded in the disk with one ring.

\begin{theorem}
\mylabel{thm:diskgirth5}
Let $G$ be a graph of internal girth at least $5$ embedded in the disk with one ring $R$.
If $G$ is $\{R\}$-critical, then
\begin{itemize}
\item $|R|\ge 8$ and $w(G,\{R\})\le s(|R|-3)+s(5)$, and furthermore,
\item if $R$ does not satisfy (E1), then $|R|\ge 9$ and $w(G,\{R\})\le s(|R|-4)+2s(5)$,
\item if $(G,R)$ is not very exceptional, then $|R|\ge 10$ and $w(G,\{R\})\le s(|R|-5)+5s(5)$, and
\item if $(G,R)$ is not exceptional, then $|R|\ge 11$ and $w(G,\{R\})\le s(|R|-5)-5s(5)$.
\end{itemize}
\end{theorem}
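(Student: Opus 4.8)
To prove the four bounds in the theorem above, my plan is to run an induction on $|E(G)|$ that establishes all four simultaneously: take a counterexample $(G,R)$ with $|E(G)|$ minimum, write $l=|R|$, and derive a contradiction by locating a reducible configuration in $G$. First I would normalize. Lemma~\ref{lem:i012} gives (I0)--(I2); since girth $5$ rules out triangles, Lemma~\ref{lem:crit3conn} applies and yields that $G$ is $2$-connected and satisfies (I6), whence also (I3) (a $2$-connected plane graph has every face bounded by a cycle, and girth $5$ makes every such cycle long enough), while (I7) and (I8) are vacuous as $\Sigma$ is a disk. The conditions that are \emph{not} automatic for a critical disk graph --- essentially: no internal $2$-cut, (I4), (I5), (I9), and well-behavedness --- each become a reduction target, since a violation exhibits a vertex pair, a short path with both ends on $R$, an edge of $R$ with both ends of degree $2$, or a small contractible disk, along which $G$ splits into strictly smaller $R_i$-critical disk graphs; applying the (four) inductive bounds to the pieces and reassembling --- $w$ is additive over the pieces up to corrections that the $s$-values are rigged to absorb, the $s(5)$, $2s(5)$, $\pm 5s(5)$ tails of the four right-hand sides serving roughly as per-cut budgets --- yields the desired bound for $G$. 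After this, and after checking a finite list of small cases and the families (E1)--(E5) directly --- e.g.\ for (E1) a chord splitting $R$ into arcs of lengths $a, l-a$ gives $w(G,\{R\})=s(a{+}1)+s(l{-}a{+}1)$, which by the discrete convexity of $s$ is maximized at $a=4$ to be exactly $s(l-3)+s(5)$, while (E2) gives $2s(5)+s(l-4)$ and (E5) gives $5s(5)+s(l-5)$, identifying (E1)--(E5) as the tight examples for the first three bounds --- we may assume $G$ is ``clean'': it satisfies (I0)--(I9), is well-behaved, and is not exceptional. (Disk-with-one-ring is exactly the case excluded from the hypotheses of Theorem~\ref{thm:summary}, so no shortcut through that theorem is available; conversely, this theorem is precisely what feeds part~(e) of Theorem~\ref{thm:summary}.)

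The core is then a discharging/reducibility dichotomy. I would give each internal $2$-cell face $f$ the initial charge $s(|f|)$, and the ring a charge recording its contribution, so that the total charge is $w(G,\{R\})$ up to a term controlled by $l$; then, using rules that push charge from large faces towards $5$- and $6$-faces and towards the ring across shared edges and short paths, I would show that either the total is already at most the applicable right-hand side, or some face or ring segment is left too poor, and that deficiency forces a specific local configuration. The thresholds at which a face counts as ``too poor'' are exactly what separate the four successive right-hand sides $s(l-3)+s(5)\ge s(l-4)+2s(5)\ge s(l-5)+5s(5)\ge s(l-5)-5s(5)$; this is why $s(5),\dots,s(8)$ must be the particular fractions $4/4113,\,72/4113,\,540/4113,\,2184/4113$ rather than round numbers --- they are solved for so that the face inequalities of the discharging and the weight inequalities of the reductions hold simultaneously --- and it is why all four bounds have to be carried through the induction together, since splitting a piece off near $R$ typically needs the ``not very exceptional'' or ``not exceptional'' bound for that piece.

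The remaining work, and the main obstacle, is the catalogue of configurations and their reductions. Typical candidates are: a degree-$2$ ring vertex with a short escape to the rest of $R$; a $5$- or $6$-face sharing a long arc with $R$ or with another small face; a small $2$-connected chunk attached to $R$ at few vertices that contracts to a single $5$-face; ``generalized Mycielskian''-type pieces; and so on. They are found by a case analysis on the cyclic pattern of face sizes around internal vertices and along the ring, using girth $5$ and (I0)--(I9) heavily --- for instance, with girth $5$, condition (I9) forces a short contractible cycle either to bound one face or (for $9$-cycles) to be the three-pentagon gadget, which pins down the local picture. For each configuration one must build the reduced graph $G'$ (a smaller disk graph with a ring $R'$ of length not much above $l$), check that $G'$ is again $R'$-critical and of girth at least $5$, and prove a coloring-transfer lemma turning a precoloring of $R$ that fails to extend over $G$ into one of $R'$ that fails to extend over $G'$; the inductive bound for $G'$ then beats the one required for $G$, a contradiction. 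The difficulty is to make the list (i) unavoidable whenever the weight exceeds the relevant threshold, (ii) individually reducible, and (iii) fine enough that the strongest bound $s(l-5)-5s(5)$, which carries essentially no slack, holds for every graph outside (E1)--(E5), so that each near-miss of the discharging is matched to one of those five families. Proving (i) is a single long, delicately tuned discharging computation; proving (ii) is a long sequence of coloring lemmas; together they are the bulk of the proof.
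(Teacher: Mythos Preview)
The paper does not prove Theorem~\ref{thm:diskgirth5}; it is imported verbatim as ``the main result of~\cite{trfree2}'', so there is no in-paper proof to compare your proposal against. Your sketch is consistent with the methodology the paper attributes to~\cite{trfree2} --- induction on $|E(G)|$, normalization to (I0)--(I9) and well-behavedness via reductions, direct verification for the families (E0)--(E5), and a discharging/reducibility dichotomy in the clean case --- and you correctly observe that Theorem~\ref{thm:summary} explicitly excludes the disk-with-one-ring case, so this theorem must be proved independently and then feeds back into Theorem~\ref{thm:summary}(e).

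That said, what you have written is a plan, not a proof, and the honest gap is exactly where you locate it yourself: the catalogue of unavoidable reducible configurations and the matching coloring-transfer lemmas are the entire content, and you have supplied neither the list nor any of the reductions. Your normalization paragraph also overreaches slightly: Lemma~\ref{lem:crit3conn} gives $2$-connectivity and (I6), but (I9) is not automatic --- it follows only \emph{after} one has Theorem~\ref{thm:diskgirth5} (the paper says so just below the statement), so invoking (I9) inside an inductive proof of this very theorem requires care: you may use it only for strictly smaller subgraphs via Lemma~\ref{lem:diskcritical} and the inductive hypothesis, not for $G$ itself. Similarly, the reductions that dispose of violations of (I4), (I5), and well-behavedness each need the inductive bound on the pieces, and checking that the $s$-arithmetic actually closes in every case (especially for the tightest bound $s(l-5)-5s(5)$) is nontrivial and is precisely what the specific values $s(5),\dots,s(8)$ are engineered for in~\cite{trfree2}. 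So your outline is the right shape, but everything that would make it a proof lives in the cited paper, not here.
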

Let us remark that in \cite{trfree2}, we prove the claim for graphs of girth at least $5$,
rather than internal girth at least $5$.  If $|R|\ge 5$, then the assumption of internal girth at least $5$
is equivalent to having girth at least five.  And, Aksenov~\cite{aksenov} proved that if $G$ is
a planar graph containing exactly one cycle $R$ of length $3$ or $4$ and with all other cycles of length at least $5$,
then any precoloring of $R$ extends to a $3$-coloring of $G$; or equivalently, there exist no $\{R\}$-critical
graphs of internal girth at least $5$ embedded in the disk with one ring $R$ of length at most $4$.

We will also need the following property of critical graphs.

\begin{lemma}
\mylabel{lem:diskcritical}
Let $G$ be a graph in a surface $\Sigma$ with rings $\RR$,
and assume that $G$ is $\RR$-critical.
Let $C$ be a non-facial cycle in $G$ bounding an open disk $\Delta\subseteq \Sigma$ disjoint from the rings,
and let $G'$ be the graph consisting of the vertices and edges of $G$ drawn in the closure of $\Delta$.
Then $G'$ may be regarded as graph embedded in the disk with one ring $C$,
and as such it is $\{C\}$-critical.
\end{lemma}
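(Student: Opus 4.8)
The plan is to verify the definition of $C$-criticality for $G'$ directly. First, $G'\ne C$: if no vertex or edge of $G$ were drawn strictly inside $\Delta$, then $\Delta$ would be a maximal connected subset of $\hat\Sigma\setminus G$ bounded by $C$, i.e.\ a face of $G$ with boundary walk $C$, contradicting the hypothesis that $C$ is non-facial; hence $G'$ properly contains $C$. It remains to show that for every proper subgraph $G''$ of $G'$ with $C\subseteq G''$ there is a precoloring of $\{C\}$ (a proper $3$-coloring of $C$) that extends to a $3$-coloring of $G''$ but not to a $3$-coloring of $G'$.

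For this I would cut $G$ along the cycle $C$. Let $G_{\mathrm{out}}$ be the subgraph of $G$ consisting of the vertices and edges drawn in $\hat\Sigma\setminus\Delta$. Since $\Delta$ is an open disk bounded by $C$, every vertex and edge of $G$ lies either in the interior of $\Delta$ or in $\hat\Sigma\setminus\Delta$, so $G=G_{\mathrm{out}}\cup G'$ and $G_{\mathrm{out}}\cap G'=C$; and since $\Delta$ is disjoint from the rings, $\bigcup\RR\subseteq G_{\mathrm{out}}$. Now fix a proper subgraph $G''\subseteq G'$ with $C\subseteq G''$ and set $G^{\ast}=G_{\mathrm{out}}\cup G''$. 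Then $G^{\ast}$ contains $\RR$, and it is a \emph{proper} subgraph of $G$: any vertex or edge of $G'$ not in $G''$ does not lie in $G_{\mathrm{out}}$, because $G_{\mathrm{out}}\cap G'=C\subseteq G''$. By $\RR$-criticality of $G$ there is a precoloring $\psi$ of $\RR$ and a proper $3$-coloring $\psi^{\ast}$ of $G^{\ast}$ extending $\psi$, such that $\psi$ does not extend to a $3$-coloring of $G$. Let $\phi$ be the restriction of $\psi^{\ast}$ to $V(C)$; this is a precoloring of $\{C\}$. Then $\psi^{\ast}|_{G''}$ is a proper $3$-coloring of $G''$ agreeing with $\phi$ on $V(C)$, so $\phi$ extends to $G''$. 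Conversely, suppose $\phi$ extended to a proper $3$-coloring $\rho$ of $G'$ with $\rho|_{V(C)}=\phi$. Since $\rho$ and $\psi^{\ast}|_{G_{\mathrm{out}}}$ agree on $V(C)=V(G_{\mathrm{out}}\cap G')$, they combine to a map $\chi$ on $V(G)$; as every edge of $G=G_{\mathrm{out}}\cup G'$ lies in $G_{\mathrm{out}}$ or in $G'$, $\chi$ is a proper $3$-coloring of $G$, and $\chi|_{\bigcup\RR}=\psi^{\ast}|_{\bigcup\RR}=\psi$ (respecting weak/non-weak vertex rings, since $\bigcup\RR\subseteq G_{\mathrm{out}}$). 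This contradicts the choice of $\psi$, so $\phi$ does not extend to $G'$. Hence $G'$ is $C$-critical.

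I do not expect a genuine obstacle here; this is the standard ``glue two $3$-colorings along a separating cycle'' argument. The only points needing care are the topological bookkeeping at the start: that cutting along $C$ really yields $G=G_{\mathrm{out}}\cup G'$ with $G_{\mathrm{out}}\cap G'=C$ and with all of $\RR$ on the outside, and that the piece $G'$ inside the closed disk $\overline{\Delta}$ is genuinely an embedded graph with $C$ as a facial ring (this is precisely the first assertion of the conclusion and holds because $\overline{\Delta}$ is a closed disk with $C$ on its boundary). Once these are in place, the coloring argument above goes through verbatim, including the degenerate case $G''=C$, which shows in particular that some precoloring of $C$ fails to extend to $G'$.
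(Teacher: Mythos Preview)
Your proof is correct and follows essentially the same approach as the paper. The paper does not prove this lemma directly but states that it is a special case of the subsequent Lemma~\ref{lem:surfcritical}, whose proof is precisely the same ``glue two $3$-colorings along the boundary'' argument you give (phrased there for single-edge deletions $G'-e'$ rather than arbitrary proper subgraphs $G''$, which is equivalent for verifying criticality); your direct treatment of the disk case, including the handling of weak vertex rings and the observation that $G'\neq C$ because $C$ is non-facial, matches the paper's reasoning specialized to this situation.
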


This together with Theorem~\ref{thm:diskgirth5} implies that property (I9) holds for all embedded critical graphs without
contractible $(\le\!4)$-cycles.
Lemma~\ref{lem:diskcritical} is a special case of the following result.
\begin{lemma}
\mylabel{lem:surfcritical}
Let $G$ be a graph in a surface $\Sigma$ with rings $\RR$,
and assume that $G$ is $\RR$-critical.
Let $J$ be a subgraph of $G$ and let $S$ be a subset of faces of $J\cup\bigcup\RR$
such that $J$ is the union of the boundary walks of the faces of $S$.
Let $G'$ be an element of the $G$-expansion of $S$ and let ${\RR}'$ be its natural rings.
If $G'$ is not equal to the union of the rings in ${\RR}'$, then $G'$ is ${\RR'}$-critical.
\end{lemma}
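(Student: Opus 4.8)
The plan is to unfold the definition of criticality and transport it across the $G$-expansion construction. Recall $G'$ is obtained by cutting $G$ along the boundary walks of the faces in $S$ and keeping, for one face $f_i\in S$, everything drawn in (the closure of) $f_i$; its natural rings $\RR'$ are exactly the boundary walks of $f_i$, with the vertex rings of $\RR$ lying in $\overline{f_i}$ inherited (and weak exactly when they were weak in $G$). So I first need to check that $G'$ really is a graph in $\Sigma_i$ with rings $\RR'$ in the sense of the definitions: the embedding is normal because the cuffs of $\Sigma_i$ correspond to facial walks of $f_i$, each of which is a cycle of $G$ (hence a facial ring) or a single vertex ring; this is already recorded in the excerpt. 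Since $G'\ne \bigcup\RR'$ by hypothesis, the nontriviality clause $G'\ne H'$ in the definition of $\RR'$-criticality is satisfied, and it remains to verify the extension property for every proper subgraph of $G'$ containing $\RR'$.

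The heart of the argument is a back-and-forth between colorings of $G$ and colorings of the pieces. In one direction: any precoloring $\phi$ of $\RR$ that extends to a $3$-coloring $\psi$ of $G$ restricts, on the part of $G$ inside $\overline{f_i}$, to a $3$-coloring of $G'$ that agrees with $\phi|_{\RR'}$ on non-weak ring vertices and differs from it on weak ones — because the ring vertices of $\RR'$ are vertices of $G$ incident with rings of $\RR$, or copies thereof along a cut, and $\psi$ respects the weak/non-weak constraint there. In the other direction, given a $3$-coloring $\psi'$ of $G'$ extending some precoloring $\phi'$ of $\RR'$, I want to glue $\psi'$ with colorings of the other expansion pieces to recolor all of $G$; the pieces are glued precisely along the facial walks of the faces in $S$, which all lie in $\bigcup\RR$ (by property (\ref{prop-sj}), $J$ is the union of the boundaries of the faces in $S$ and each isolated vertex of $J$ is a vertex ring), so once the colorings agree on $J$ they patch to a proper coloring of $G$. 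Controlling that agreement on $J$ is where criticality of $G$ is used.

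Concretely, let $G_1'$ be a proper subgraph of $G'$ with $\RR'\subseteq G_1'$; I must produce a precoloring of $\RR'$ extending to $G_1'$ but not to $G'$. Pull $G_1'$ back through the homeomorphism $\theta_i$ and glue it with the full graphs $G\cap\overline{f_j}$ for $j\ne i$ (and with everything of $G$ not in any face of $S$ — but (\ref{prop-sj}) says $J$ is the union of boundaries of faces in $S$, and in the setting of Lemma~\ref{lem:diskcritical}/\ref{lem:surfcritical} the faces of $S$ with their boundaries account for all of the relevant part of $G$, so the only "outside" is identified with $\hat Z$ and is removed). This yields a proper subgraph $G_1$ of $G$ with $\bigcup\RR\subseteq G_1$, and $G_1\ne G$ because $G_1'\ne G'$ and the identification of boundary walks is injective on edges of faces of $S$. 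Since $G$ is $\RR$-critical, there is a precoloring $\phi$ of $\RR$ that extends to a $3$-coloring $\psi_1$ of $G_1$ but not to $G$. Restrict $\psi_1$ to the $f_i$-part to get a precoloring $\phi'$ of $\RR'$ together with an extension to $G_1'$. The nontrivial point is that $\phi'$ does \emph{not} extend to $G'$: if it did, say via $\psi'$, then $\psi'$ together with $\psi_1$ restricted to the other pieces would agree with $\phi$ on all of $J\subseteq\bigcup\RR$ (they all restrict to $\phi$ there) and hence glue to a proper $3$-coloring of $G$ extending $\phi$ — contradiction.

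The main obstacle, and the place where care is genuinely required, is the bookkeeping around vertices of $J$ that get \emph{split} into several copies by the cut — the excerpt explicitly warns of this. One must check that a precoloring of $\RR'$ (which sees the split copies separately) can always be chosen consistently with a single precoloring $\phi$ of $\RR$ (which sees the original vertex once), and conversely that a coloring of $G'$ assigns the \emph{same} color to all copies of a split vertex whenever that is needed to reglue. For the forward direction this is automatic (restricting $\psi_1$ gives the same color to all copies), but for the "does not extend" direction one must rule out a coloring $\psi'$ of $G'$ that colors two copies of a split $J$-vertex differently — and indeed this is not a problem, because we only need $\psi'$ to agree with $\psi_1$ on $J$ in order to derive the contradiction, and that agreement is what we would be assuming for contradiction in the first place; a coloring of $G'$ restricting to $\phi'$ on $\RR'$ that colors copies inconsistently simply does not restrict to $\phi$ on $\RR$ and is irrelevant. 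So the resolution is to phrase everything in terms of extensions of the specific precolorings $\phi$ and $\phi'$ that track each split vertex's color, and the split-vertex issue dissolves. I would finally note that Lemma~\ref{lem:diskcritical} is the special case where $S$ consists of the single disk face bounded by the non-facial cycle $C$, so no separate argument is needed for it.
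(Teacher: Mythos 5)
Your proposal follows the same basic plan as the paper's proof: transport the criticality back and forth across the $G$-expansion by comparing a precoloring of $\RR$ with its induced precoloring on $\RR'$, and derive a contradiction from the observation that a coloring of $G'$ extending the induced precoloring would glue with the rest of $G$ into a coloring extending the original precoloring. The only structural difference is that the paper works edge by edge (for each edge $e'$ of $G'$ not in $\RR'$, it passes to the unique edge $e$ of $G$ and uses $\RR$-criticality on $G-e$), whereas you work with an arbitrary proper subgraph $G_1'\supseteq\RR'$; these are interchangeable since removing one edge suffices.

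Two imprecisions are worth flagging. First, ``restrict $\psi_1$ to the $f_i$-part to get a precoloring $\phi'$ of $\RR'$'' is not quite right for weak vertex rings: if $v'\in\RR'$ is weak and corresponds to $v\in\RR$, one must set $\phi'(v')=\phi(v)$ (the original $\RR$-precoloring), not $\phi'(v')=\psi_1(v)$, since a precoloring must \emph{differ} from its extension at weak vertex rings. The paper's definition of $\psi'$ makes exactly this distinction, and your gluing step needs it to go through. Second, your parenthetical claim that ``the faces of $S$ with their boundaries account for all of the relevant part of $G$'' is false in general — $G$ can have vertices and edges in faces of $J$ not in $S$ (this is exactly the situation in Lemma~\ref{lem:diskcritical}, where $G$ extends outside $\Delta$) — but this does not break your argument, since your main gluing statement correctly keeps everything of $G$ outside $f_i$ intact. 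With these small corrections, your proof matches the paper's.
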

\begin{proof}
Consider any edge $e'\in E(G')$ that does not belong to any of the rings in ${\RR'}$.
By the definition of $G$-expansion, there is a unique edge $e\in E(G)$ corresponding to $e'$.
Since $G$ is $\RR$-critical, there exists a precoloring $\psi$ of $\RR$ that does not extend to a $3$-coloring of $G$,
but extends to a $3$-coloring $\phi$ of $G-e$.  We define a precoloring $\psi'$ of ${\RR'}$ in the natural way:
each ring vertex $v'\in V(G')$ to be precolored corresponds to a unique vertex $v\in V(G)$, and we set $\psi'(v')=\phi(v)$.
Observe that $\phi$ corresponds to a $3$-coloring of $G'-e'$ that extends $\psi'$.  If $\psi'$ extends to a $3$-coloring $\phi'$ of $G'$, then define $\phi_1$ in the following way.
If $v\in V(G)$ corresponds to no vertex of $G'$, then $\phi_1(v)=\phi(v)$.  If $v\in V(G)$ corresponds to at least one vertex $v'\in V(G')$,
then $\phi_1(v)=\phi'(v')$.  Note that if $v$ corresponds to several vertices of $G'$, then all these vertices belong to rings that are not weak vertex-like
and all of them have color $\phi(v)$, thus the definition does not depend on which of these vertices we choose.  Observe that
$\phi_1$ is a $3$-coloring of $G$ extending $\psi$, which is a contradiction.  Therefore, $\psi'$ extends to a $3$-coloring of $G'-e'$,
but not to a $3$-coloring of $G'$.  Since this holds for every choice of $e'$, it follows that $G'$ is ${\RR}'$-critical.
\end{proof}

Similarly, one can prove the following:

\begin{lemma}\mylabel{lemma-crcon}
Let $G$ be a graph in a surface $\Sigma$ with rings $\RR$,
and assume that $G$ is $\RR$-critical.  Let $c$ be a simple closed curve in $\Sigma$
intersecting $G$ in a set $X$ of vertices.  Let $\Sigma'_0$ be one of the surfaces obtained from $\widehat{\Sigma}$ by
cutting along $c$, and let $\Sigma_0=\Sigma'_0\cap \Sigma$.  Let us split the vertices of $G$ along $c$, let $G'$ be the part of the resulting graph embedded in $\Sigma_0$,
let $X'$ be the set of vertices of $G'$ corresponding to the vertices of $X$ and let ${\RR}'\subseteq {\RR}$ be the rings of $G$ that
are contained in $\Sigma_0$.
Let $\Delta$ be an open disk or a disjoint union of two open disks disjoint from $\Sigma'_0$ such that the boundary of $\Delta$ is equal to the cuff(s) of $\Sigma'_0$ corresponding to $c$.
Let $\Sigma'=\Sigma_0\cup \Delta$.
Let $Y$ consist of all vertices of $X'$ that are not incident with a cuff in $\Sigma'$.  For each $y\in Y$, choose an open disk $\Delta_y\subset \Delta$
such that the closures of the disks are pairwise disjoint and the boundary of $\Delta_y$ intersects $G'$ exactly in $y$.
Let $\Sigma''=\Sigma'\setminus \bigcup_{y\in Y}\Delta_y$.  For each $y\in Y$, add to $G'$ a triangle $R_y$ with $y\in V(R_y)$ tracing the boundary of $\Delta_y$,
and let ${\RR}''={\cal R'}\cup \{R_y:y\in Y\}$, where the rings $R_y$ are considered as non-weak vertex-like rings,
and furthermore, all weak vertex-like rings whose main vertices belong to $X'\setminus Y$
are turned into non-weak vertex-like rings.
If $G'$ is not equal to the union of the rings in ${\RR}''$, then $G'$ is ${\RR}''$-critical.
\end{lemma}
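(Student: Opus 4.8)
The plan is to mimic the proof of Lemma~\ref{lem:surfcritical} very closely, replacing the role of the $G$-expansion of a set of faces by the cutting construction along~$c$. The essential point is that cutting a normally embedded graph along a simple closed curve that meets $G$ only in vertices produces a normally embedded graph, and that colorings of the pieces can be glued back to colorings of $G$ by identifying the split copies of each vertex of $X$. So fix an edge $e'\in E(G')$ not lying in any ring of $\RR''$; note that since the vertex rings in $Y$ are single vertices and the rings in $\RR'$ are the (facial or vertex) rings of $G$ contained in $\Sigma_0$, the edge $e'$ corresponds to a unique edge $e\in E(G)$ that lies in no ring of $\RR$. I would then invoke $\RR$-criticality of $G$ to obtain a precoloring $\psi$ of $\RR$ that does not extend to a $3$-coloring of $G$ but extends to a $3$-coloring $\phi$ of $G-e$.

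Next I would transport $\psi$ and $\phi$ to a precoloring $\psi'$ of $\RR''$ exactly as in the previous lemma: each ring vertex $v'$ of $G'$ corresponds to a unique $v\in V(G)$; for weak vertex rings set $\psi'(v')=\psi(v)$, and for all other ring vertices (including the newly created vertex rings in $Y$, which are declared non-weak, and vertices on rings in $\RR'$) set $\psi'(v')=\phi(v)$. This is a legitimate proper $3$-coloring of $\bigcup\RR''$ because the coloring it comes from restricts to a proper coloring of that subgraph, and because the disks $\Delta_y$ are chosen so that the boundary curve of $\Delta_y$ meets $G'$ only in $y$, so no new adjacencies among ring vertices are created. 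Then $\phi$ (read on $G'$ via the correspondence) is a $3$-coloring of $G'-e'$ extending $\psi'$, so $\psi'$ extends over $G'-e'$.

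It remains to show $\psi'$ does not extend to a $3$-coloring of $G'$. Suppose it extends to $\phi'$. Define a coloring $\phi_1$ of $G$: if $v\in V(G)$ corresponds to no vertex of $G'$ put $\phi_1(v)=\phi(v)$, and if $v$ corresponds to at least one vertex $v'\in V(G')$ put $\phi_1(v)=\phi'(v')$. As in Lemma~\ref{lem:surfcritical} one must check this is well-defined: if $v$ is split into several copies, then $v\in X$ and (by the choice of $\Delta$ and the $\Delta_y$'s) each copy of $v$ is either incident with a cuff of $\Sigma''$ corresponding to $c$ — hence lies on a ring of $\RR'$ — or is one of the vertex rings in $Y$; in all cases the copy is a ring vertex of a non-weak ring, so $\phi'$ agrees with $\psi'$ on it, and $\psi'$ on every such copy equals $\phi(v)$. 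Hence all copies of $v$ receive the same color and $\phi_1$ is well-defined. Since edges of $G$ correspond to edges of $G'$ and $\phi'$, $\phi$ are proper, $\phi_1$ is a proper $3$-coloring of $G$; and it extends $\psi$ because weak vertex rings of $G$ correspond to weak vertex rings of $G'$ (colored by $\psi$, hence $\phi_1$ differs from $\psi$ there exactly when $\phi'$ does) while non-weak ring vertices of $\RR$ get color $\phi(v)=\psi(v)$. This contradicts the choice of $\psi$. Thus $\psi'$ extends over $G'-e'$ but not over $G'$; since $e'$ was arbitrary and $G'\neq\bigcup\RR''$ by hypothesis, $G'$ is $\RR''$-critical.

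The main obstacle is bookkeeping rather than any genuinely new idea: one must be careful that every copy of a split vertex is accounted for as a ring vertex of $\RR''$ (this is exactly the reason for introducing the auxiliary disks $\Delta_y$ and the vertex rings $Y$), that the embedding of $G'$ in $\Sigma''$ is indeed normal, and that no spurious adjacencies between ring vertices are created when we cut and cap off with disks. Once these topological points are checked, the coloring argument is identical to that of Lemma~\ref{lem:surfcritical}, which is why the paper says this lemma can be proved "similarly."
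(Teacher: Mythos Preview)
Your proof is correct and follows exactly the approach the paper intends: it transports the argument of Lemma~\ref{lem:surfcritical} to the cutting construction, which is precisely why the paper only says this lemma is proved ``similarly.'' One small wording slip: the copies of a split vertex $v\in X$ are not on cuffs of $\Sigma''$ ``corresponding to $c$'' (those were capped by $\Delta$) but rather either on an old cuff carrying a ring of $\RR'$ or are themselves the new non-weak vertex rings in $Y$; your subsequent reasoning uses the correct dichotomy, so this is cosmetic.
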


In particular, if $G'$ is a component of an ${\RR}$-critical graph, ${\RR}'$ are the rings contained in $G'$ and
$G'$ is not equal to the union of ${\RR}'$, then $G'$ is ${\RR}'$-critical.

\section{$(\le\!4)$-cycles on a cylinder}

The most technically difficult part of the proof of Theorem~\ref{thm:mainsurf} is dealing with long cylindrical subgraphs
of the considered graph.  We work out the details of this situation in the following two sections.
We start with the case of a graph embedded in the cylinder with rings of length at most four.
We will need the following result on graphs embedded in the disk with a ring of length at most twelve, which follows
from the results of Thomassen~\cite{thomassen-surf}.

\begin{theorem}\label{thm-planechar}
Let $G$ be a graph of girth $5$ embedded in the disk with a ring $R$ such that $|R|\le 12$.  If $G$ is $\{R\}$-critical and
$R$ is an induced cycle, then

\begin{itemize}
\item[(a)] $|R|\ge 9$ and $G-V(R)$ is a tree with at most $|R|-8$ vertices, or
\item[(b)] $|R|\ge 10$ and $G-V(R)$ is a connected graph with at most $|R|-5$ vertices
containing exactly one cycle, and the length of this cycle is $5$, or
\item[(c)] $|R|=12$ and every second vertex of $R$ has degree two and is contained in a facial $5$-cycle.
\end{itemize}
\end{theorem}

\begin{figure}
\begin{center}\includegraphics{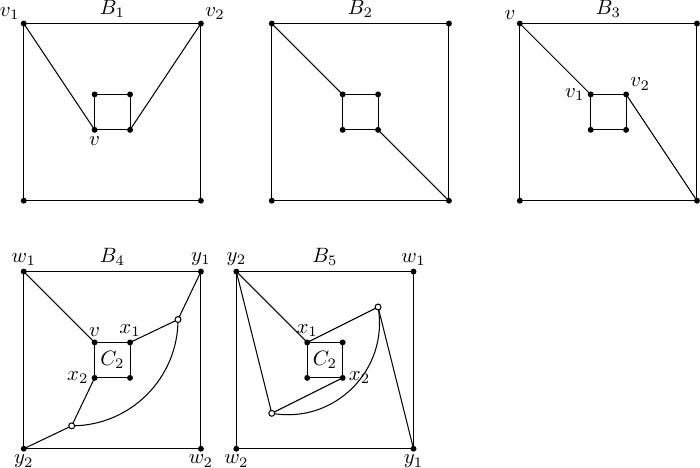}\end{center}
\caption{Maximal basic graphs.}\label{fig-basic}
\end{figure}

A graph $H$ embedded in the cylinder with (vertex-disjoint) rings $C_1$ and $C_2$ of length at most $4$ is {\em basic} if
every contractible cycle in $H$ has length at least five, $H$ is $\{C_1,C_2\}$-critical, and one of the following holds:
\begin{itemize}
\item $H$ contains a triangle, or
\item $H$ is not $2$-connected, or
\item the distance between $C_1$ and $C_2$ is one and $|V(H)\setminus V(C_1\cup C_2)|\le 2$.  
\end{itemize}
Consider a basic $2$-connected triangle-free graph.  We can cut the graph along a shortest path between $C_1$ and $C_2$,
resulting in a graph embedded in a disk bounded by a cycle $C$ of length $10$.  Note that the resulting graph is
$\{C\}$-critical by Lemma~\ref{lem:surfcritical}.  A straightforward case analysis using Theorem~\ref{thm-planechar} shows that every $2$-connected triangle-free basic graph
is a subgraph of one of the graphs drawn in Figure~\ref{fig-basic}.

\begin{figure}
\begin{center}\includegraphics{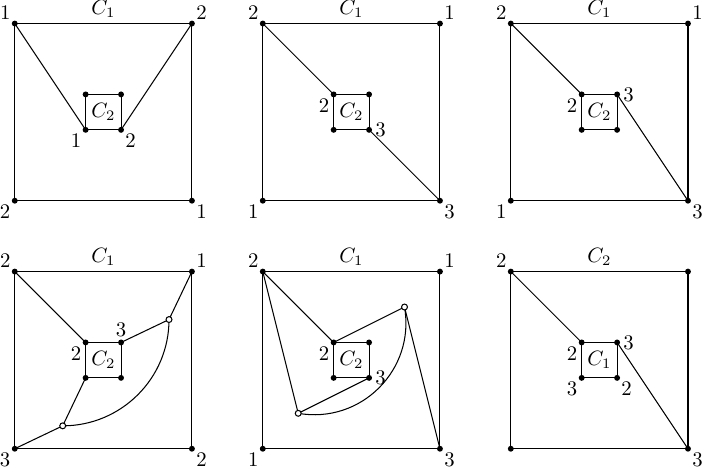}\end{center}
\caption{Colorings of basic graphs from (\ref{cl-basicsim}).}\label{fig-bascol}
\end{figure}

Observe furthermore that these graphs have the following properties.
\claim{cl-basicsim}{Let $C_1$ and $C_2$ be the rings of a triangle-free $2$-connected basic graph $H$.
There exists a $3$-coloring $\psi$ of $C_1$, vertices $v_1,v_2\in V(C_2)$ and colors $c_1\neq c_2$ such that
if $\phi$ is a $3$-coloring of $C_1\cup C_2$ matching $\psi$ on $C_1$ and satisfying $\phi(v_i)\neq c_i$
for $i\in\{1,2\}$, then $\phi$ extends to a $3$-coloring of $H$.}
The colorings for (\ref{cl-basicsim}) are indicated in Figure~\ref{fig-bascol}.

\claim{cl-twoone}{Let $C_1$ and $C_2$ be the rings of a triangle-free $2$-connected basic graph $H$, let $v_1$ and $v_2$ be
two distinct vertices of $C_1$ and let $c_1\neq c_2$ be two colors.  There exists a vertex $v\in V(C_2)$ and a color $c$ such that
every $3$-coloring $\psi$ of $C_2$ such that $\psi(v)\neq c$ extends to a $3$-coloring $\phi$ of $H$ satisfying
$\phi(v_1)\neq c_1$ and $\phi(v_2)\neq c_2$.}
\begin{proof}
Let us label the triangle-free $2$-connected basic graphs and their vertices as in Figure~\ref{fig-basic}.
If $H$ is $B_1$ or $B_3$ and $v_1$ and $v_2$ are as depicted, then set $c=c_2$ and let $v$ be the vertex indicated in the figure.  If $H=B_4$,
then let $c$ be the unique color distinct from $c_1$ and $c_2$ and let $v$ be the vertex indicated in the figure.

Consider any $3$-coloring $\psi$ of $C_2$.  For each vertex $w\in V(C_1)$,
let $L'_\psi(w)\subseteq \{1,2,3\}$ be the
list consisting of all colors not used by $\psi$ on the neighbors of $w$.  Let $L_\psi(w)=L'_\psi(w)$ if $w\neq v_i$ for $i\in\{1,2\}$,
and $L_\psi(w)=L'_\psi(w)\setminus \{c_i\}$ if $w=v_i$.

Suppose first that $H$ is $B_1$, $B_2$ or $B_3$.
Note that the sum of the sizes of the lists $L_\psi$ is at least $8$ and each of the lists
has size at least one.  Therefore, $C_1$ can be colored from the lists given by $L_\psi$, unless $C_1$ contains two adjacent
vertices with the same list of size one.  That is only possible if $H$ is $B_1$ or $B_3$ and $v_1$ and $v_2$ are as depicted in Figure~\ref{fig-basic}.
However, then we can choose the vertex $v$ as indicated in the figure and set $c=c_2$. This ensures that $c_2$ belongs to $L_\psi(v_1)\setminus L_\psi(v_2)$,
and thus $L_\psi(v_1)\neq L_\psi(v_2)$.  Therefore, $\psi$ extends to a $3$-coloring $\phi$ of $H$ satisfying
$\phi(v_1)\neq c_1$ and $\phi(v_2)\neq c_2$.

Suppose now that $H$ is $B_4$ or $B_5$.  Let us first consider the case that $\psi(x_1)\neq \psi(x_2)$.  If $H=B_4$, then we can by symmetry assume that $\{v_1,v_2\}\neq \{w_1,y_1\}$,
as otherwise we can swap the labels $x_1$ with $x_2$ and $y_1$ with $y_2$.  Let $L(w)=L_\psi(v)$ for $w\in V(C_1)\setminus \{y_1\}$
and $L(y_1)=L_\psi(y_1)\setminus\{\psi(x_2)\}$ and observe that any coloring of $C_1$ from lists given by $L$ extends to a $3$-coloring $\phi$ of $H$
matching $\psi$ on $C_2$ and satisfying $\phi(v_1)\neq c_1$ and $\phi(v_2)\neq c_2$.  Again, the sum of the sizes of the lists $L$
is at least $8$ and each of the lists has size at least one, thus such a coloring exists unless $C_1$ contains two adjacent
vertices with the same list of size one.  This is not possible, since if $H=B_4$, then $\{v_1,v_2\}\neq \{w_1,y_1\}$.

Finally, let us consider the case that $\psi(x_1)=\psi(x_2)$.  If there exists a coloring $\psi'$ of $C_1$ from lists $L_\psi$ such that
$\psi'(y_1)\neq\psi'(y_2)$, then the union of $\psi$ and $\psi'$
extends to a $3$-coloring $\phi$ of $H$, which clearly satisfies $\phi(v_1)\neq c_1$ and $\phi(v_2)\neq c_2$.  Let us find such a coloring $\psi'$.
If $C_1$ contains a vertex $w\not\in\{y_1,y_2\}$ such that $|L_\psi(w)|=3$, then it suffices to color the vertices of $Y=V(C_1)\setminus\{w\}$
by pairwise distinct colors from their lists and then color $w$ differently from its neighbors.  Such a coloring of $Y$ always exists, since
$\sum_{y\in Y}|L_\psi(y)|\ge 6$, all the lists have size at least one and at most three and if all of them have size two, then $v_1$ and $v_2$ belong to $Y$
and $L_\psi(v_1)=\{1,2,3\}\setminus \{c_1\}$ is different from $L_\psi(v_2)=\{1,2,3\}\setminus \{c_2\}$.  Therefore, we can assume that all
vertices in $V(C_1)\setminus \{y_1,y_2\}$ have lists of size at most two.

Suppose that either $H=B_5$, or $H=B_4$ and $|L_\psi(w_1)|=2$.
In this case $|L_\psi(w_1)|=|L_\psi(w_2)|=2$ and by symmetry, we can assume that $|L_\psi(y_1)|=3$ and $|L_\psi(y_2)|\ge 2$.
Let us choose a color $\psi'(w_1)=\psi'(w_2)\in L_\psi(w_1)\cap L_\psi(w_2)$ and then color $y_1$ and $y_2$ by distinct colors from
$L(y_1)\setminus\{\psi'(w_1)\}$ and $L(y_2)\setminus\{\psi'(w_1)\}$, respectively.

Therefore, we can assume that $H=B_4$ and $|L_\psi(w_1)|=1$.  Note that $|L_\psi(w_2)|=2$ and $|L_\psi(y_1)|=|L_\psi(y_2)|=3$.
By symmetry between $v_1$ and $v_2$, we can assume that $v_1=w_1$ and $v_2=w_2$.  The coloring $\psi'$ exists unless $L_\psi(w_2)=\{1,2,3\}\setminus L_\psi(w_1)$.
However, this is prevented by the choice of $c$.
\end{proof}

For a $4$-cycle $C=x_1x_2x_3x_4$, the {\em type} of its $3$-coloring $\lambda$ is the set of the vertices $x_i$ of $C$
such that $\lambda(x_i)\neq \lambda(x_{i+2})$ (where $x_5=x_1$ and $x_6=x_2$).  Note that the type of $\lambda$ is $\emptyset$,
$\{x_1,x_3\}$ or $\{x_2,x_4\}$.  In (\ref{cl-basicsim}), any coloring of the same type as $\psi$ has the same property,
possibly with different colors $c_1$ and $c_2$.

Let $G$ and $H$ be graphs with common rings $\{C_1,C_2\}$. We say that $H$ {\em subsumes} $G$ if every precoloring of $C_1\cup C_2$
that extends to a $3$-coloring of $H$ also extends to a $3$-coloring of $G$.

\begin{lemma}
\mylabel{lemma-cylbase}
Let $G$ be a graph embedded in the cylinder with rings $\{R_1, R_2\}$ of length at most $4$.
If every cycle of length at most $4$ in $G$ is non-contractible, then
there exists a basic graph $H$ with rings $\{R_1,R_2\}$ that subsumes $G$.
Furthermore, either $H=G$ or $|V(H)|+|E(H)|<|V(G)|+|E(G)|$.
\end{lemma}
\begin{proof}
Suppose for a contradiction that $G$ is a counterexample such that $|V(G)|+|E(G)|$ is minimal.
It follows that $G$ is $\{R_1,R_2\}$-critical, $2$-connected and triangle-free,
and in particular $|R_1|=|R_2|=4$.  Let $R_1=a_1a_2a_3a_4$ and $R_2=b_1b_2b_3b_4$,
where the labels are assigned in the clockwise order.  Since $G$ is triangle-free and all $4$-cycles are non-contractible, it follows that
every internal vertex has at most one neighbor in each of the rings.  

Suppose that $G$ contains a $5$-face $C=v_1v_2v_3v_4v_5$ such that all its vertices are internal and have degree three.  For $1\le i\le 5$, let
$x_i$ be the neighbor of $v_i$ different from $v_{i-1}$ and $v_{i+1}$ (where $v_0=v_5$ and $v_6=v_1$).  Observe that if $x_1=x_3$, then
$x_2\neq x_4$, thus by symmetry assume that $x_1\neq x_3$.  Let $G'=(G-V(C))+x_1x_3$.  Suppose that $K'$ is a cycle of length
at most $4$ in $G'$ that contains the edge $x_1x_3$.  Then $G$ contains a cycle $K$ of length at most $7$ obtained from $K'$ by replacing $x_1x_3$
by $x_1v_1v_2v_3x_3$.  Since $v_1$ and $v_2$ have neighbors on the opposite sides of this path, $K$ does not bound a face.
By Theorem~\ref{thm-planechar}, we conclude that $K$ and $K'$ are non-contractible.  Therefore, all $(\le\!4)$-cycles in $G'$
are non-contractible.
Furthermore, every precoloring of $R_1$ and $R_2$ that extends to a $3$-coloring of $G'$ also extends to a $3$-coloring of $G$ (the $3$-coloring
of $G'$ assigns different colors to $x_1$ and $x_3$, thus it can be extended to $C$).  Thus, $G'$ subsumes $G$, and consequently
it contradicts the minimality of $G$.
We conclude that
\claim{cl-no53}{every $5$-face in $G$ is incident with a ring vertex or a vertex of degree at least $4$.}

It follows that the distance between $R_1$ and $R_2$ is at least two: otherwise, if say $a_1$ is
adjacent to $b_1$, then apply Theorem~\ref{thm-planechar} to the graph obtained from $G$ by cutting open along the walk $a_1a_2\ldots a_1b_1b_2\ldots b_1$.  Outcome (b)
is excluded by (\ref{cl-no53}), thus $G-V(R_1\cup R_2)$ would have at most two vertices and $G$ would be basic.

Suppose that $G$ contains a face $C=v_1v_2\ldots v_k$ of length $k\ge 7$.  We may assume that $v_1$ is an internal vertex.  Let $G'$ be the graph
obtained from $G$ by identifying $v_1$ with $v_3$ to a vertex $v$.  Consider a cycle $K'\subseteq G'$ of length at most $4$ that does not appear in $G$.
Such a cycle corresponds to a cycle $K$ in $G$ of length at most $6$, obtained by replacing $v$ by $v_1v_2v_3$.  Since $v_1$ is an internal vertex,
$v_2$ cannot be a ring vertex of degree two.  It follows that $K$ does not bound a face and it is non-contractible by Theorem~\ref{thm-planechar}.  Therefore,
all $(\le\!4)$-cycles in $G'$ are non-contractible.  Furthermore, every $3$-coloring of $G'$ extends to a
$3$-coloring of $G$, and we obtain a contradiction with the minimality of $G$.  Therefore, each face of $G$ has length at most $6$.

Suppose that $G$ contains a face $C=v_1v_2\ldots v_6$ of length $6$.  We can assume that $v_1$ is an internal vertex.  If $v_3$ or $v_5$ is an internal vertex,
then let $G'$ be the graph obtained from $G$ by identifying $v_1$, $v_3$ and $v_5$ to a single vertex.  As in the previous paragraph, we obtain
a contradiction.  It follows that $v_3$ and $v_5$ are ring vertices, and by a symmetrical argument, two of $v_2$, $v_4$ and $v_6$ are ring vertices.
If $v_2$ is internal, then since the distance between $R_1$ and $R_2$ is at least two, we can assume that $V(R_1)=\{v_3,v_4,v_5,v_6\}$,
and thus $v_3$ and $v_6$ are adjacent.  In this situation, we consider the graph obtained from $G$ by identifying $v_1$ with $v_5$ and $v_2$
with $v_4$ (which is isomorphic to $G-\{v_4,v_5\}$, and thus contains no contractible $(\le\!4)$-cycles), and again obtain
a contradiction with the minimality of $G$.  Thus $v_2$ is not internal, and by symmetry, $v_6$ is not internal either.  Therefore, $v_4$ is
internal and $v_2$ and $v_6$ are ring vertices.  Since the distance between $R_1$ and $R_2$ is at least two,
we may assume that $v_2=a_2$, $v_3=a_3$, $v_5=b_4$ and $v_6=b_1$.  We apply Theorem~\ref{thm-planechar} to the $10$-cycle
$B=a_1a_2v_1b_1b_2b_3b_4v_4a_3a_4$.  The case (b) is excluded by (\ref{cl-no53}), thus either $B$ is not induced or (a) holds.
If $B$ is not induced, then its chord joins $v_1$ with $v_4$.  By Lemma~\ref{lem:diskcritical} and Theorem~\ref{thm-planechar},
we conclude that $G$ is the graph consisting of $R_1$, $R_2$, the paths $a_2v_1b_1$ and $b_4v_4a_3$, and the edge $v_1v_4$.
Observe that every precoloring $\psi$ of the rings that does not extend to a $3$-coloring
of $G$ satisfies $\psi(a_2)=\psi(b_4)$.  Therefore, $G$ is subsumed by the basic graph $H$ consisting of $R_1$, $R_2$ and the edge between $a_2$ and $b_4$.
If $B$ is an induced cycle, then by (a), $G-V(B)$ is a tree $F$ with at most two vertices.  If $F$ has only one vertex $w$, then $w$ cannot be adjacent to
both $v_1$ and $v_4$, hence one of these vertices has degree two, which is a contradiction.  If $V(F)=\{x,y\}$, then since $v_1$ and $v_4$ have degree
at least three, we can assume that $x$ is adjacent to $v_1$ and $a_4$ and $y$ is adjacent to $b_2$ and $v_4$.  However, by identifying $v_1$ with $b_4$ and $v_4$ with $a_2$,
we obtain a graph isomorphic to the graph $B_5$ of Figure~\ref{fig-basic}, which subsumes $G$.
Therefore,
\claim{cl-all5}{all faces of $G$ have length $5$.}

Together with Lemma~\ref{lem:diskcritical} and Theorem~\ref{thm-planechar}, this implies that

\claim{cl-no67}{$G$ contains no contractible cycles of length $6$ or $7$, and the disk bounded by any contractible $8$-cycle $K$ of $G$
consists of two $5$-faces separated by a chord of $K$.}

Suppose that $G$ contains a $4$-cycle $C=v_1v_2v_3v_4$ different from $R_1$ and $R_2$.  By the assumptions,
$C$ is non-contractible; for $i\in\{1,2\}$, let $G_i$ be the subgraph of $G$ drawn between $R_i$ and $C$ and
let $d_i$ be the distance between $R_i$ and $C$.

Let us first consider the case that $d_i\ge 1$ and $G_i$ is not basic for some $i\in\{1,2\}$.  By the minimality of $G$,
there exists a basic graph $G'_i$ that subsumes $G_i$ (considered to be embedded in a cylinder with rings $R_i$ and $C$) such that $|V(G'_i)|+|E(G'_i)|<|V(G_i)|+|E(G_i)|$.
Let $G'=G'_i\cup G_{3-i}$ and observe that $G'$ subsumes $G$ and $|V(G')|+|E(G')|<|V(G)|+|E(G)|$.
Note that every contractible cycle in $G'$ has length at least five, since neither $G'_i$ nor $G_{3-i}$ contains a contractible $(\le\!4)$-cycle
and $G_{3-i}$ is triangle-free.  Therefore, by the minimality of $G$, there exists a basic graph $H$ which subsumes $G'$ and $|V(H)|+|E(H)|\le|V(G')|+|E(G')|$.
However, then $H$ also subsumes $G$, which is a contradiction.

We conclude that if $d_i\ge 1$, then $G_i$ is a basic graph, and since it is $2$-connected and triangle-free, it follows that $d_i=1$.
Let us choose the labels of $R_1$ and $R_2$ and the cycle $C$ so that $d_1$ is as small as possible.
In particular, $d_1\le d_2$.  Let us discuss the possible cases:
\begin{itemize}
\item $d_1=d_2=0$: Since the distance between $R_1$ and $R_2$ is at least two, we conclude that $|V(R_1)\cap V(C)|=|V(R_1)\cap V(C)|=1$.
We can assume that $v_1=a_1$ and $v_3=b_3$.  By Theorem~\ref{thm-planechar}, the open disks bounded by the closed walks $a_1v_2v_3v_4a_1a_4a_3a_2$ and $b_3b_4b_1b_2b_3v_4a_1v_2$
contain no vertices, and since $v_2$ and $v_4$ have degree at least three, we may assume that $v_2$ is adjacent to $a_4$ and $v_4$ to $b_2$.  However, then
$G$ contains a triangle $a_1v_2a_4$, which is a contradiction.
\item $d_1=0$, $d_2=1$:  We may assume that $a_1=v_1$.  Since $G$ is triangle-free, (\ref{cl-no67}) implies that $|V(C)\cap V(R_1)|=1$
and $a_3v_3\in E(G)$.  Since $d_2=1$, $G_2$ is a basic graph, and by (\ref{cl-all5}), we conclude that $G_2$ is isomorphic to $B_4$ or $B_5$ from Figure~\ref{fig-basic}.
Let $w_1w_2=G_2-V(C\cup R_2)$.  Up to symmetry, there are two cases to consider:

\begin{figure}
\begin{center}\includegraphics[width=12cm]{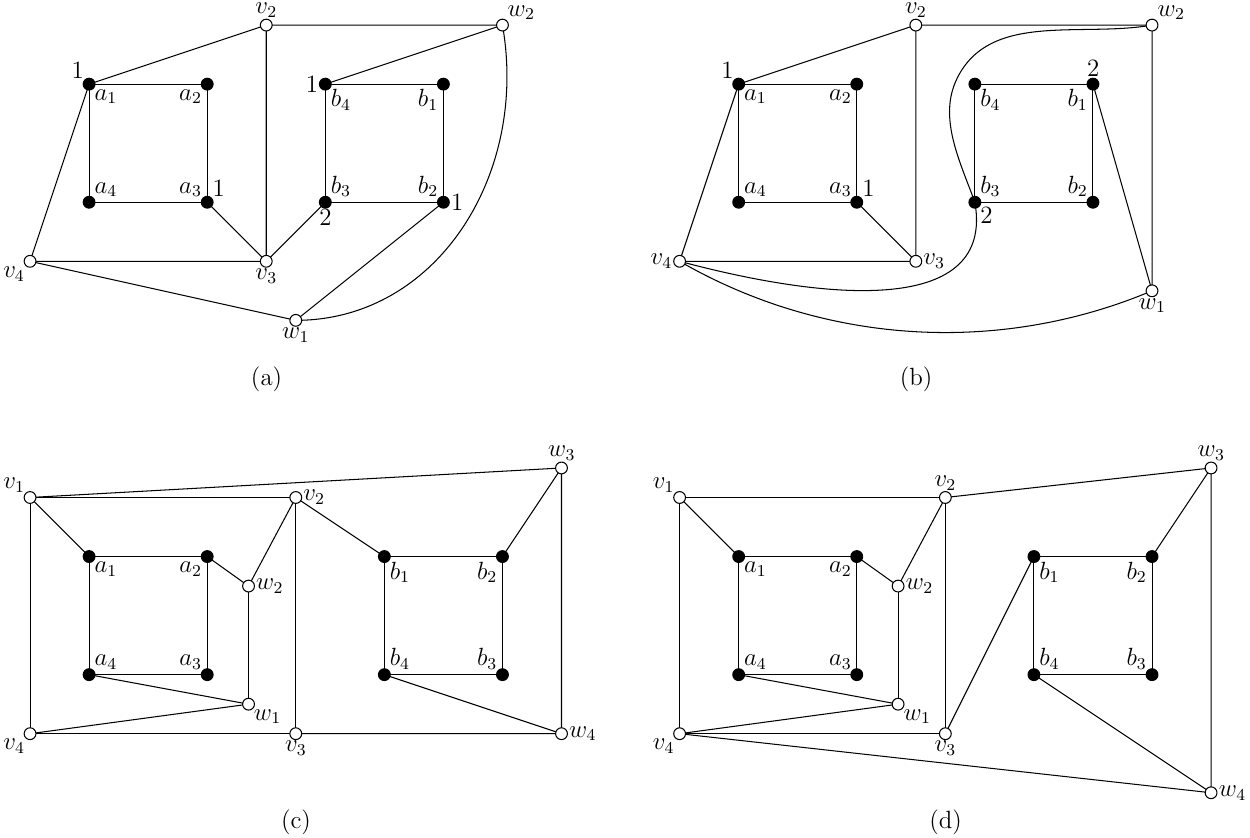}\end{center}
\caption{Cases in the proof of (\ref{cl-4only}); numbers indicate a non-extendable $3$-coloring.}\label{fig-basan}
\end{figure}

\begin{itemize}
\item $b_3$ is adjacent to $v_3$.  Since $v_2$ and $v_4$ have degree at least three, we can assume that $w_1$ is adjacent to $v_4$ and $b_2$ and
$w_2$ is adjacent to $v_2$ and $b_4$; see Figure~\ref{fig-basan}(a).  Note that every precoloring of $R_1\cup R_2$ that assigns $a_3$ and $b_3$ the
same color extends to a 3-coloring of $G$.  Also, the precolorings of $R_1\cup R_2$ that assign $a_3$ and $b_2$ the same color and do
not extend to a 3-coloring of $G$ are obtained from the one depicted in Figure~\ref{fig-basan}(a) by permuting the colors and coloring $a_2$, $a_4$ and $b_1$
arbitrarily.  Hence, $G$ is subsumed by the basic graph $H$ consisting of $R_1$, $R_2$ and a vertex $z$, with $a_1$ adjacent to $b_2$ and $z$ to $b_2$, $b_3$ and $a_3$.
\item $b_3$ is adjacent to $v_4$.  Since $v_2$ has degree at least three, we can assume that
$w_1$ is adjacent to $b_1$ and $v_4$ and $w_2$ is adjacent to $b_3$ and $v_2$; see Figure~\ref{fig-basan}(b).
Note that if $\phi$ is a precoloring of $R_1\cup R_2$ that does not extend to a $3$-coloring of $G$ and $\phi(a_1)\neq\phi(b_3)$, then
$\phi$ is obtained from the coloring depicted in Figure~\ref{fig-basan}(b) by permuting the colors and coloring $a_2$, $a_4$, $b_2$ and $b_4$
arbitrarily.
Hence, $G$ is subsumed by the basic graph $H$ consisting of $R_1$, $R_2$, adjacent vertices $z_1$ and $z_2$, and edges
$a_1b_3$, $a_1z_1$, $b_1z_1$, $b_3z_2$ and $a_3z_2$.
\end{itemize}
\item $d_1=d_2=1$: By the choice of $C$, $G$ does not contain a $4$-cycle distinct from $R_1$ and $R_2$ that intersects
one of them.  Additionally, all faces of $G$ have length $5$ and $G_1$ and $G_2$ are basic graphs.
The inspection of graphs in Figure~\ref{fig-basic} shows that $G_1$ and $G_2$ are isomorphic to $B_4$.
Hence, we can assume that $a_1$ is adjacent to $v_1$ and $G_1-V(R_1\cup C)=w_1w_2$ with $w_1$ adjacent to $a_4$ and $v_4$ and $w_2$ adjacent to $a_2$ and $v_2$.
Since $v_3$ has degree at least three, $v_1$ cannot have a neighbor in $R_2$, thus there are up to symmetry
two possible cases:
\begin{itemize}
\item $b_1$ is adjacent to $v_2$, $G_2-V(R_2\cup C)=w_3w_4$, and $w_3v_1, w_3b_2,w_4v_3,w_4b_4\in E(G)$; see Figure~\ref{fig-basan}(c).
If $\phi$ is any precoloring of $R_1\cup R_2$ that assigns $a_1$ and $b_2$ different colors, then we can give $v_1$ the color $\phi(b_2)$
and extend $\phi$ to a $3$-coloring of $G$ greedily.  Hence, $G$ is subsumed by the basic graph $H$ consisting of $R_1$, $R_2$ and the edge $a_1b_2$.
\item $b_1$ is adjacent to $v_3$, $G_2-V(R_2\cup C)=w_3w_4$, and $w_3v_2, w_3b_2,w_4v_4,w_4b_4\in E(G)$; see Figure~\ref{fig-basan}(d).  But then every precoloring $\phi$
of $R_1$ and $R_2$ extends to a $3$-coloring of $G$ (we can assign $v_2$ and $v_4$ the same color unless $\phi(a_2)=\phi(a_4)\neq \phi(b_2)=\phi(b_4)$,
in which case we can color $v_2$ and $v_4$ by distinct colors $c_2$ and $c_4$ so that $\phi(a_1),\phi(b_1)\in\{c_2,c_4\}$),
contrary to the assumption that $G$ is $\{R_1,R_2\}$-critical.
\end{itemize}
\end{itemize}

Therefore, 
\claim{cl-4only}{$R_1$ and $R_2$ are the only $4$-cycles in $G$.}
Suppose that $G$ has a face $C=v_1v_2v_3v_4v_5$ such that $v_2$, \ldots, $v_5$ are
internal vertices of degree three.  For $2\le i\le 5$, let $x_i$ be the neighbor of $v_i$ that is not incident with $C$.
By (\ref{cl-4only}), the vertices $x_i$ are distinct.
If at least one of $x_3$ and $x_4$ is internal, then let $G'$ be the graph obtained from $G-\{v_2,\ldots,v_5\}+x_2x_5$ by identifying $x_3$ with $x_4$ to a new vertex $x$.
Observe that every $3$-coloring of $G'$ extends to a $3$-coloring of $G$.  Furthermore, suppose that $K'$ is a cycle of length at most $4$ in $G'$
that does not appear in $G$, and let $K$ be the corresponding cycle in $G$ obtained by replacing $x_2x_5$ by $x_2v_2v_1v_5x_5$ or $x$ by $x_3v_3v_4x_4$ or
both.  If $|K|\le 7$, then since $K$ cannot bound a face, Theorem~\ref{thm-planechar} implies that $K$ and $K'$ are non-contractible.
If $|K|\ge 8$, then $K$ contains both $x_2v_2v_1v_5x_5$ and $x_3v_3v_4x_4$.  By symetry, we can assume that either $K'=x_5x_2x$ or $K'=x_5x_2xu$ for some vertex $u$.
Since $G$ is embedded in the cylinder, it cannot contain both the edge $x_2x_4$ and either the edge $x_3x_5$ or the path $x_3ux_5$.
It follows that $G$ contains the edge $x_2x_3$ (and either the edge $x_4x_5$ or the path $x_4ux_5$).
However, this is excluded by (\ref{cl-4only}).  It follows that all $(\le\!4)$-cycles in $G'$ are non-contractible,
and $G'$ is a smaller counterexample than $G$, which is a contradiction.

Let us now consider the case that both $x_3$ and $x_4$ are ring vertices.  Here,
we exclude the possibility that $x_3$ and $x_4$ belong to different rings:  If
that were the case, then we can assume that $x_3=a_1$ and $x_4=b_1$.   Since all 
faces of $G$ have length $5$, it follows that $x_4$ and
$x_5$ have a common neighbor $v$.  We apply Theorem~\ref{thm-planechar} to the
disk bounded by the closed walk $a_1a_2a_3a_4a_1vb_1b_2b_3b_4b_1v$ of length $12$.  By (\ref{cl-no53}), the case
(b) is excluded.  Since $v$ has degree at least three, $a_1vb_1$ cannot be incident with two $5$-faces
and the case (c) is excluded as well.  Therefore,
$G-V(R_1\cup R_2)-\{v\}$ is a tree with four vertices $v_2$, $v_3$, $v_4$ and
$v_5$.  By (\ref{cl-4only}), $v$ is not equal to $x_2$, $x_5$ or $v_2$, hence
two of these vertices belong to the same ring.  Since $G$ is triangle-free,
(\ref{cl-4only}) implies that no internal vertex has two neighbors in the same
ring, thus we can assume that $v_2\in V(R_1)$ and $x_2,x_5\in V(R_2)$.  However,
the path $x_2v_2v_3v_4v_5x_5$ together with a subpath of $R_2$ forms a cycle
that separates $v_2$ from $R_1$, which contradicts the assumption that $G$ is embedded in the cylinder.
Therefore,
\claim{cl-l5f}{if $C=v_1v_2v_3v_4v_5$ is a face such that $v_2$, \ldots, $v_5$ are internal vertices of degree three,
then for some $i\in \{1,2\}$, both $v_3$ and $v_4$ have a neighbor in $R_i$.}

Let us now assign charge to vertices and faces of $G$ as follows: each face $f$ gets the initial charge $|f|-4$ and each vertex $v$ gets the initial charge $\deg(v)-4$.  The sum
of the initial charges is $-8$.  Let us redistribute the charge: each \hbox{$5$-face} sends $1/3$ to each incident vertex $v$ such that $v$ is internal and has degree
three.  Furthermore, for each ring vertex $w$ of degree two, if there exists a face $f=v_1v_2v_3v_4v_5$ such all vertices incident with $f$ except for $v_1$
are internal of degree three and if $v_3v_4$ is incident with the same face as $w$, then $w$ sends $1/3$ to $f$.
Note that after this procedure, all vertices and faces have non-negative charge, with the following exceptions:
the ring vertices of degree two have charge at most $-7/3$ and the ring vertices of degree three have charge $-1$.
For $i\in\{1,2\}$, let $c_i$ be the sum of the charges of the vertices of $R_i$, together with the charges of the faces that share an edge with $R_i$
(such a face cannot share an edge with $R_{3-i}$, since the distance between $R_1$ and $R_2$ is at least two and all faces have length $5$).
Note that $c_1+c_2\le -8$,  and we may assume that $c_1\le -4$.

For $i\in\{1,2,3,4\}$, let $f_i$ denote the face sharing the edge $a_ia_{i+1}$ with $R_1$. If a vertex $a_i$ has degree three, then let $x_i$ denote its internal neighbor.
Since $G$ is $2$-connected, at most two vertices of $R_1$ have degree two.  Let us discuss several cases.
\begin{itemize}
\item $R_1$ contains two vertices of degree two:  Since all faces have length $5$ and $G$ is triangle-free, these two vertices are non-adjacent, say $a_2$ and $a_4$.
Similarly, since $G$ does not contain a $4$-cycle different from $R_1$ and $R_2$, both $a_1$ and $a_3$ have degree at least four, and since the sum of the charges of the
vertices of $R_1$ is at most $-4$, we conclude that $\deg(a_1)=\deg(a_3)=4$.  Let $f_2=a_1a_2a_3x'_3x'_1$ and $f_4=a_1a_4a_3x''_3x''_1$.
Note that both $f_2$ and $f_4$ send charge to at most two vertices, hence their final charge is $1/3$, and since $c_1\le -4$,
it follows that the charge of $a_2$ and $a_4$ is $-7/3$.  Therefore, the vertices $x'_1$, $x''_1$, $x'_3$, $x''_3$ and their neighbors distinct from
$a_1$ and $a_3$ are internal vertices of degree three.  However, these vertices form an $8$-cycle, contradicting the criticality of $G$.
\item $R_1$ contains one vertex of degree two, say $a_2$, and $a_1$, $a_3$ and $a_4$ have degree three: by (\ref{cl-all5}), $x_1$ is adjacent
to $x_3$, $x_1$ and $x_4$ have a common neighbor $x_{41}$ and $x_3$ and $x_4$ have a common neighbor $x_{43}$.
Suppose that $x_1$ and $x_3$ have degree three.  The path $x_{41}x_1x_3x_{43}$ is a part of a boundary of a $5$-face $f$;
let $y$ be the fifth vertex of $f$.  Then $x_{41}x_4x_{43}y$ is a $4$-cycle, contradicting (\ref{cl-4only}).  Therefore, we may assume that $x_1$ has degree greater than three.  This implies that $a_2$ does not send any charge and its final charge is $-2$.  Furthermore, $f_2$ has charge at least $2/3$
and $f_4$ has charge at least $1/3$, and thus $c_1=-4$.  Furthermore, $x_3$, $x_4$, $x_{41}$ and $x_{43}$
are internal and have degree three.
\item $R_1$ contains one vertex of degree two, say $a_2$, and at least one vertex of $R_1$ has degree at least four:  note that the sum of the charges
of $a_2$ and $f_2$ is at least $-2$.  It follows that exactly one vertex of $R_1$ has degree four, two vertices have degree three, and $c_1=-4$.
\item $R_1$ contains no vertices of degree two.  Since $c_1\le -4$, it follows that all vertices of $R_1$ have degree three and all internal vertices of the faces sharing an edge with $R_1$ have degree three.  But then $G$ contains an $8$-cycle of internal vertices of degree $3$, contradicting the criticality of $G$.
\end{itemize}

We conclude that $c_1=-4$, and by symmetry, $c_2=-4$.  It follows that all charges that are not counted in $c_1$ and $c_2$ are equal to zero.
Let us now go over the possible
cases for the neighborhood of $R_1$ again, keeping the notation established in the previous paragraph:
\begin{itemize}
\item $R_1$ contains one vertex of degree two, say $a_2$, and $a_1$, $a_3$ and $a_4$ have degree three:  Since all internal vertices have zero charge, $x_1$ has degree exactly four.
Let $y_1$, $y_{41}$ and $y_{43}$ be the neighbors of $x_1$, $x_{41}$ and $x_{43}$, respectively, not incident with $f_2$, $f_3$ and $f_4$.  By (\ref{cl-all5}),
$y_{43}$ is adjacent to $y_1$ and to $y_{41}$, and the vertices $y_1$ and $y_{41}$ have a common neighbor $z$ distinct from $y_{43}$.
By (\ref{cl-4only}), we have $R_2=y_1y_{43}y_{41}z$.
However, then we can set $H$ to be the graph consisting of $R_1$, $R_2$ and a vertex $w$, with edges $a_4y_{41}$, $wy_1$, $wa_1$ and $wa_4$.
\item $R_1$ contains one vertex of degree two, say $a_2$, one vertex of degree four and two of degree three.
Let $a_i$ be the vertex of degree four and $x'_i$ and $x''_i$ its internal neighbors.
Since $c_1=-4$, all internal vertices incident with the faces $f_2$, $f_3$ and $f_4$ have degree three,
and by (\ref{cl-all5}) they form a path $P$ with ends $x'_i$ and $x''_i$.  Furthermore, $x'_i$ and $x''_i$ have adjacent neighbors $y'_i$ and $y''_i$.
We let $G'$ consist of $G-V(P)$ and a new vertex $w$ adjacent to $y'_i$, $y''_i$ and $a_i$, and observe that every $3$-coloring of $G'$
extends to a $3$-coloring of $G$.  This contradicts the minimality of $G$.
\end{itemize}
\end{proof}

\begin{figure}
\begin{center}
\includegraphics{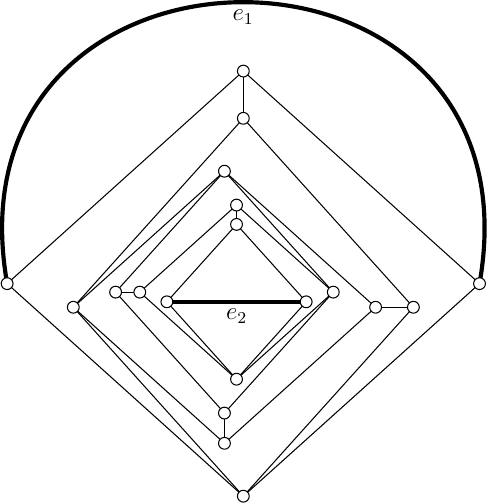}
\end{center}
\caption{An example of an $(e_1,e_2)$-chain}\label{fig-chain}
\end{figure}

A graph $G$ is an {\em $(e_1,e_2)$-chain} if either $G$ is the complete graph on four vertices and $e_1$ and $e_2$ form a
matching in $G$; or,
there exists an $(e_1,u_1u_2)$-chain $H$ and $G$ consists of $H-u_1u_2$, vertices $y_1$, $y_2$ and $u'_2$
and edges $y_1y_2$, $u_2u'_2$, $u_1y_1$, $u_1y_2$, $u'_2y_1$ and $u'_2y_2$, where $e_2=y_1y_2$.  See Figure~\ref{fig-chain} for an illustration.
Let us note that each $(e_1,e_2)$-chain
is a planar graph with chromatic number $4$ containing exactly four triangles (two incident with each of $e_1$ and $e_2$),
and all other faces of $G$ have length $5$.  The graph $G$ can be embedded in the Klein bottle by putting crosscaps on the edges $e_1$ and $e_2$; we call such an embedding {\em canonical}.
Note that no cycle of length less than $5$ is contractible in a canonical embedding of $G$, and that all $4$-cycles of the canonical embedding are separating
(cutting along any of them splits the Klein bottle in two M\"obius strips).
Thomas and Walls~\cite{tw-klein} proved the following:

\begin{theorem}\label{thm-klein}
If $G$ is a $4$-critical graph embedded in the Klein bottle so that no cycle of length at most $4$ is contractible,
then $G$ is a canonical embedding of an $(e_1,e_2)$-chain, for some edges $e_1,e_2\in E(G)$.
\end{theorem}

For the torus, Thomassen~\cite{thom-torus} showed that the situation is even simpler.
\begin{theorem}\label{thm-torus}
If $G$ is embedded in the torus so that no cycle of length at most $4$ is contractible, then $G$ is $3$-colorable.
\end{theorem}

Aksionov~\cite{aksenov} proved that if $G$ is a planar graph, $C$ is a $(\le\!4)$-cycle in $G$, and $G$ contains at most one triangle distinct from $C$,
then any precoloring of $C$ extends to a $3$-coloring of $G$.  As a corollary, we get the following.
\begin{theorem}\label{thm-aksen}
Let $G$ be a graph embedded in the cylinder with rings $R_1$ and $R_2$ with $|R_1|\le4$, such that every $(\le\!4)$-cycle
in $G$ is non-contractible.  Let $G_1$ be the component of $G$ that contains $R_1$.  If $R_2$ is not contained in $G_1$,
then every precoloring of $R_1$ extends to a $3$-coloring of $G_1$.  In particular, if $G$ is $\{R_1,R_2\}$-critical and not connected,
then $R_1$ forms a connected component of $G$.
\end{theorem}
\begin{proof}
To prove the first claim, let $K_1=R_1, K_2,K_3,\ldots, K_m$ be a maximal sequence of $(\le\!4)$-cycles in $G_1$ such that
for $1\le i<j\le m$, the cycle $K_i$ is contained in the subgraph of $G$ between $R_1$ and $K_j$.
For $i=1,\ldots, m-1$ in turn, we apply the aforementioned result of Aksionov~\cite{aksenov} to the subgraph between $K_i$ and $K_{i+1}$,
gradually extending the $3$-coloring to $G_1$.

For the second claim, note that by Theorem~\ref{grotzsch} each component of $G$ contains a ring,
and thus if $G$ is not connected, then it has exactly two components, the component $G_1$ containing $R_1$ and another component containing $R_2$.
Since every precoloring of $R_1$ extends to $G_1$, the $\{R_1,R_2\}$-criticality of $G$ implies that $G_1=R_1$.
\end{proof}

Let us now give a description of $\{R_1,R_2\}$-critical graphs on a cylinder, where $|R_1|,|R_2|\le 3$.

\begin{lemma}\label{lemma-critshort}
Let $G$ be an $\{R_1,R_2\}$-critical graph embedded in the cylinder, where $|R_1|,|R_2|\le 3$.
If every cycle of length at most $4$ in $G$ is non-contractible, then one of the following holds:
\begin{itemize}
\item $G$ consists of $R_1$, $R_2$ and an edge between them, or
\item neither $R_1$ nor $R_2$ is vertex-like and $G$ consists of $R_1$, $R_2$ and two edges between them, or
\item neither $R_1$ nor $R_2$ is vertex-like and $G$ consists of $R_1$, $R_2$ and two adjacent vertices of degree three,
each having a neighbor in $R_1$ and in $R_2$.
\end{itemize}
\end{lemma}
\begin{proof}
By Theorem~\ref{thm-aksen}, we have that $G$ is connected.  By Lemma~\ref{lemma-unweak}, we may assume that neither $R_1$ nor $R_2$ is a weak vertex-like ring.
If the distance between $R_1$ and $R_2$ is at most two, then let $J$ be the subgraph of $G$ equal
to the union of $R_1$, $R_2$ and the shortest path between them and let $f$ be the face of $J$.  Let $G'$ be the unique element of
the $G$-expansion of $\{f\}$, and let $R'$ be its natural ring.  Note that $G'$ is embedded in the disk and $|R'|\le 10$,
and that either $G'=R'$ or $G'$ is $R'$-critical by Lemma~\ref{lem:surfcritical}.  If $G'=R'$, then $G$ consists of $R_1$, $R_2$ and an edge between them,
and hence the first outcome of the lemma holds.
If $G$ is equal to $R'$ with a chord, then $G$ consists of $R_1$, $R_2$ and two edges between them,
and hence the second outcome of the lemma holds.  If $G'\neq R'$ and $R'$ is an induced cycle, then
$G'$ is one of the graphs described in Theorem~\ref{thm-planechar}(a) or (b).  As the corresponding graph $G$ must be $\{R_1,R_2\}$-critical,
a straightforward case analysis shows that this is only possible if $G$ is one of the graphs described in the last outcome of this lemma.
Therefore, assume that the distance between $R_1$ and $R_2$ is at least three.

Since $G$ is $\{R_1,R_2\}$-critical, there exists a precoloring $\psi$ of $R_1\cup R_2$ that
does not extend to a $3$-coloring of $G$.  We identify the vertices of $R_1$ and $R_2$
to which $\psi$ assigns the same color and we obtain a graph $G'$ embedded in the torus or in the Klein bottle (in the latter case, we can assume that neither $R_1$ nor $R_2$
is vertex-like, as otherwise we can exchange the colors of vertices of $R_1$ or $R_2$ of degree two before the identification).
Note that $G'$ has no loops, since $R_1$ and $R_2$ are not adjacent.  Observe also that $G'$ contains no contractible $(\le\!4)$-cycle.
Since $G'$ is not $3$-colorable, Theorems~\ref{thm-klein} and \ref{thm-torus} imply that $G'$ is embedded in the Klein bottle
and contains a canonical embedding of an $(e_1,e_2)$-chain as a subgraph.  Therefore, $G'$ contains a separating non-contractible $4$-cycle $C$.
Let $c$ be a simple closed curve in the Klein bottle tracing $C$.  Cutting the Klein bottle along the triangle $R$ obtained by the identification of $R_1$ with $R_2$
splits $c$ into several curves with ends in $R_1\cup R_2$ ($c$ intersects $R$, since $c$ is non-contractible).  Let $n_{ij}$ denote the number of these curves with
the starting point in $R_i$ and the ending point in $R_j$.  Since $c$ is $2$-sided and non-contractible, we conclude that $n_{12}+n_{21}$ is even and non-zero.
Consequently, the subgraph of $G$ corresponding to $C$ contains at least two paths joining $R_1$ and $R_2$.  However, this implies that the distance
between $R_1$ and $R_2$ is at most two, which is a contradiction.
\end{proof}

\begin{corollary}\label{cor-critshort}
Let $G$ be an $\{R_1,R_2\}$-critical graph embedded in the cylinder, where $R_1$ is a weak vertex-like ring.
If every cycle of length at most $4$ in $G$ is non-contractible, then $R_2$ is a ring of length at least $4$.
\end{corollary}
\begin{proof}
By Lemma~\ref{lemma-critshort}, if $|R_2|\le 3$, then $G$ consists of an edge joining a vertex of $R_2$ with $R_1$.
However, since $R_1$ is weak, every precoloring of $\{R_1,R_2\}$ extends to a $3$-coloring of $G$, contradicting the
assumption that $G$ is $\{R_1,R_2\}$-critical.
\end{proof}

\begin{figure}
\begin{center}\includegraphics{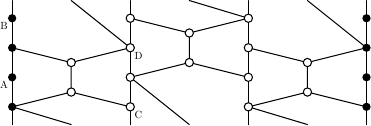}\end{center}
\caption{Arbitrarily large critical graph with rings of length four.}\label{fig-cex4}
\end{figure}

Finally, we give a similar result for $\{R_1,R_2\}$-critical graphs, where each of $R_1$ and $R_2$ has length at most four.
A {\em broken chain} is a graph obtained from an $(e_1,e_2)$-chain by removing the edges $e_1$ and $e_2$, see
Figure~\ref{fig-cex4} for an illustration (the top of the picture is identified with the bottom, giving an embedding in the cylinder).
Note that in any $3$-coloring of the graph depicted in Figure~\ref{fig-cex4},
if $A$ and $B$ have different colors, then the colors of $C$ and $D$ differ as well.  Repeating this observation, we conclude that if the colors
of $A$ and $B$ differ, then the colors of the corresponding vertices of the rightmost cycle differ as well.
Consequently, this gives an example of an $\{R_1,R_2\}$-critical graph embedded in
the cylinder, where $R_1$ and $R_2$ are arbitrarily distant $4$-cycles.

Dvo\v{r}\'ak and Lidick\'y~\cite{dvolid} gave a
complete list of such $\{R_1,R_2\}$-critical graphs embedded in a cylinder without contractible $(\le\!4)$-cycles
(other than broken chains, there are only finitely many).
However, their proof is computer assisted.
In this paper, we give a much weaker bound on the size of the graphs, which however suffices for our purposes.
We start with the case that there are many $(\le\!4)$-cycles separating $R_1$ from $R_2$.

\begin{lemma}\label{thm-compens}
Let $G$ be an $\{R_1,R_2\}$-critical graph embedded in the cylinder $\Sigma$, where $|R_1|,|R_2|\le 4$.
Suppose that every cycle of length at most $4$ in $G$ is non-contractible.  If $G$ contains at least $34$ cycles
of length at most $4$, then $|R_1|=|R_2|=4$ and $G$ is a broken chain.
\end{lemma}
\begin{proof}
Since $G$ is $\{R_1,R_2\}$-critical, it is not equal to $R_1\cup R_2$, and thus $G$ is connected by Theorem~\ref{thm-aksen}.
Let $C_1$ and $C_2$ be distinct cycles of length at most $4$ in $G$.  We claim that
$C_1$ bounds a closed disk in $\widehat{\Sigma}$ that contains $C_2$.  Indeed, otherwise each of the open disks in $\widehat{\Sigma}$
bounded by $C_1$ contains a vertex of $C_2$, and we conclude that the set $X=V(C_1)\cap V(C_2)$ has size two.
But then there exist three disjoint paths of length at most two between the vertices of $X$, and one of the $(\le\!4)$-cycles formed by these paths is contractible
in $\Sigma$, contradicting the assumptions.

We write $C_1<C_2$ if the closed disk bounded by $C_1$ in $\Sigma+\widehat{R_2}$ contains $C_2$.  Note that $<$ is a linear ordering of the
cycles of length at most four in $G$.  Let $K_1, K_2, \ldots, K_m$ be the list of all cycles of length at most four in $G$ sorted according
to this ordering (we have $K_1=R_1$ and $K_m=R_2$).  
For $i<j$, let $G_{ij}$ be the subgraph of $G$ drawn between $K_i$ and $K_j$.  Note that if $K_i$ and $K_j$ are vertex-disjoint, then
$G_{ij}$ is a $\{K_i,K_j\}$-critical graph embedded in the cylinder with rings $\{K_i,K_j\}$. In that case Lemma~\ref{lemma-cylbase} implies that $G_{ij}$ is subsumed by a $\{K_i,K_j\}$-critical basic graph $H_{ij}$.
If $K_i$ and $K_j$ are not vertex-disjoint, then we define $H_{ij}=G_{ij}$.

Consider cycles $K_i$ and $K_j$ for some $i<j$.  If $|K_i|=|K_j|=3$ and $V(K_i)\cap V(K_j)\neq\emptyset$,
then Theorem~\ref{thm-planechar} implies $j=i+1$.  If $|K_i|=|K_j|=3$ and $K_i$ and $K_j$ are vertex-disjoint, then Lemma~\ref{lemma-critshort} implies that
$j\le i+3$.  If $K_i$ and $K_j$ are not necessarily triangles and $V(K_i)\cap V(K_j)\neq\emptyset$, then by Theorem~\ref{thm-planechar} the area between $K_i$ and $K_j$ consists either of
one face or of two $5$-faces, and thus either $j=i+1$, or $j=i+2$ and $K_{i+1}$ is a triangle.  In particular, $K_i$ and $K_{i+3}$
are vertex-disjoint for $1\le i\le m-3$.

Consider indices $i<j<k$ and a graph $B\in \{G_{ij},H_{ij}\}$, and suppose that $B\cup H_{jk}$ contains a contractible cycle $C$ of length at most $4$.  By the definition of a basic graph,
$C\not\subseteq B$ and $C\not\subseteq H_{jk}$; hence, $C$ has length $4$ and $C=v_1v_2v_3v_4$, where $v_2,v_4\in V(K_j)$, $v_1\in
V(B)\setminus V(K_j)$ and $v_3\in V(H_{jk})\setminus V(K_j)$.  Furthermore, $v_2$ and $v_4$ must be consecutive vertices of $K_j$,
since otherwise $v_2v_3v_4$ together with one of the paths between $v_2$ and $v_4$ in $K_j$ forms a contractible $4$-cycle in $H_{jk}$.
Consequently, both $B$ and $H_{jk}$ contain a triangle incident with an edge of $K_j$.
Therefore, we have the following.
\claim{cl-noconcyc}{For any $i<j<k$ and a graph $B\in \{G_{ij},H_{ij}\}$, if $B\cup H_{jk}$ contains a contractible cycle of length at most $4$,
then both $B$ and $H_{jk}$ contain a triangle with an edge in $K_j$.}

An \emph{interval} is a pair $(i,j)$ such that $1\le i < j\le m$.  The interval $(i,j)$ is \emph{isolated from triangles} if $|K_t|=4$ for $\max(i-1,1)\le t\le \min(j+1,m)$.
The interval $(i,j)$ is \emph{safe} if it is isolated from triangles, and furthermore $H_{t,t+2}$ is triangle-free and $2$-connected for $i\le t\le j-2$.
Consider two intervals $(i_1,j_1)$ and $(i_2,j_2)$, where $i_2\ge j_1+6$.  Suppose that neither of the intervals is safe.
For both $(i,j,p)\in\{(i_1,j_1,1),(i_2,j_2,2)\}$, perform the following trasformation: If $(i,j)$ is not isolated from triangles, then do not modify $G$ and let $T_p=K_t$
for some $t$ such that $|K_t|=3$ and $\max(i-1,1)\le t\le \min(j+1,m)$.  If $(i,j)$ is isolated from triangles, then let $t$ be an index such that $i\le t\le j-2$ and
$H_{t,t+2}$ either contains a triangle or a cutvertex.  Replace the subgraph $G_{t,t+2}$ in $G$ by $H_{t,t+2}$ (and note that since $(i,j)$ is isolated from triangles,
(\ref{cl-noconcyc}) implies that this does not create a contractible $(\le\!4)$-cycle).  If $H_{t,t+2}$ contains a triangle, then let $T_p$ denote this triangle.
Otherwise, $H_{t,t+2}$ contains a cutvertex $w$.  By the criticality of $H_{t,t+2}$, $w$ separates $K_t$ from $K_{t+2}$, and thus there exists a non-contractible curve
$c$ intersecting $H_{t,t+2}$ exactly in $w$; we add a triangle $T_p$ (with vertex set consisting of $w$ and two new vertices) tracing $c$ to the graph.
Let $G'$ denote the graph created from $G$ by these operations, and note that $G'$ subsumes $G$.  For $i\in \{1,2\}$, let $G'_i$ be the subgraph of $G'$ drawn between $R_i$ and $T_i$,
and let $G''$ be the subgraph of $G'$ drawn between $T_1$ and $T_2$.  By Theorem~\ref{thm-aksen}, every precoloring of $\{R_1, R_2\}$ extends to a $3$-coloring of $G'_1\cup G'_2$.
Furthermore, the distance between $T_1$ and $T_2$ in $G'$ is at least three (because there are at least three cycles $K_{j_1+2}, \ldots, K_{i_2-2}$ separating them), and
by Lemma~\ref{lemma-critshort}, every precoloring of $\{T_1,T_2\}$ extends to a $3$-coloring of $G''$.  Consequently, every precoloring of $\{R_1, R_2\}$ extends to a $3$-coloring of $G'$,
and thus also to a $3$-coloring of $G$.  This is a contradiction, since $G$ is $\{R_1, R_2\}$-critical.
Therefore,
\claim{cl-safe}{if $(i_1,j_1)$ and $(i_2,j_2)$ are intervals with $i_2\ge j_1+6$, then at least one of them is safe.}

Consider now a safe interval $(i,i+6)$.  Note that since the interval is isolated from triangles, $K_i$,
$K_{i+2}$, $K_{i+4}$, and $K_{i+6}$ are pairwise vertex disjoint.  Since the interval is safe, $H_{i,i+2}$, $H_{i+2,i+4}$, and $H_{i,i+6}$
are $2$-connected and triangle-free.
Combining (\ref{cl-basicsim}) and (\ref{cl-twoone}) shows that there exists a precoloring $\psi$ of $K_{i+6}$, a vertex $v\in V(K_{i+2})$
and a color $c$ such that every precoloring $\phi_2$ of $K_{i+2}\cup K_{i+6}$ that matches $\psi$ on $K_{i+6}$ and satisfies
$\phi_2(v)\neq c$ extends to a $3$-coloring of $H_{i+2,i+4}\cup H_{i+4,i+6}$.
Furthermore, an inspection of the basic $2$-connected triangle-free graphs shows that that every $3$-coloring of $K_i$
extends to a $3$-coloring of $H_{i,i+2}$ that assigns $v$ a color different from $c$.  
It follows that every precoloring $\phi$ of $K_i\cup K_{i+6}$ that matches $\psi$ on $K_{i+6}$ extends to a $3$-coloring of
$H_{i,i+2}\cup H_{i+2,i+4}\cup H_{i,i+6}$, and thus also to a $3$-coloring of $G_{i,i+6}$.  In fact, it is sufficient to assume that $\phi$ has the same
type on $K_{i+6}$ as $\psi$ to obtain this conclusion. Together with Theorem~\ref{thm-aksen}, we conclude that
\claim{cl-safec}{if $(i,i+6)$ is a safe interval, then there exists a type $S$ such that every precoloring of $R_1\cup K_{i+6}$ whose
type on $K_{i+6}$ is $S$ extends to a $3$-coloring of $G_{1,i+6}$.  Symmetrically, there exists a type $S'$ such that every precoloring of $R_2\cup K_i$ whose
type on $K_i$ is $S'$ extends to a $3$-coloring of $G_{i,m}$.}

If $(1,7)$ is not safe, then $(13,19)$ and $(28,34)$ are safe by (\ref{cl-safe}).  If $(28,34)$ is not safe, then $(1,7)$ and $(16,22)$ are safe (\ref{cl-safe}).
Otherwise, both $(1,7)$ and $(28,34)$ are safe.  Hence, we can fix safe intervals $(i-6,i)$ and $(j,j+6)$ such that $j\ge i+9$.

Let $G'=G_{ij}$ with rings $K_i=a_1a_2a_3a_4$ and $K_j=b_1b_2b_3b_4$.  By (\ref{cl-safec}), there exist types $S_i$ and $S_j$
such that every precoloring of $R_1\cup K_i$ whose type on $K_i$ is $S_i$ extends to $G_{1i}$, and
every precoloring of $R_2\cup K_j$ whose type on $K_j$ is $S_j$ extends to $G_{jm}$.
Since $j\ge i+9$, the distance between $K_i$ and $K_j$ is at least three.  Let $G''$ be the graph obtained from the embedding of $G'$ in the cylinder in the
following way: Cap the holes of the cylinder by disks.  If $S_i=\{a_t,a_{t+2}\}$ for some $t\in\{1,2\}$, then add the edge $a_ta_{t+2}$ to the face bounded by $K_i$ and
add a crosscap to the middle of this edge.  If $S_i=\emptyset$, then identify $a_1$ with $a_3$ to a vertex $a_{13}$ and $a_2$ with $a_4$ to
a vertex $a_{24}$.  Observe that at most two vertices of $K_i$ are incident with a $(\le\!4)$-cycle distinct from $K_i$ in $G'$,
and if there are two such vertices, then they are adjacent.  By symmetry, we can assume that $K_i$ is the only $(\le\!4)$-cycle
incident with $a_2$ and $a_3$.  We add a crosscap on the edge $a_{13}a_{24}$ and draw the edges from $a_{13}$ to the neighbors of $a_3$ and the edges from
$a_{24}$ to the neighbors of $a_2$ through the crosscap.  Transform $K_j$ in the same way according to $S_j$.  Note that $G''$
is embedded in the Klein bottle and it has no loops.

Consider a cycle $C$ of length at most $4$ in $G''$.  Since the distance between $K_i$ and $K_j$ is at least three,
we may assume that $C$ does not contain any of the vertices $b_1$, \ldots, $b_4$, $b_{13}$ or $b_{24}$.  Let us first
consider the case that $S_i=\{a_t,a_{t+2}\}$ for some $t\in\{1,2\}$.  If $C$ does not contain the edge $a_ta_{t+2}$, then $C$ is non-contractible
in $G$, and thus it separates the crosscaps in $G''$.  If $C$ contains the edge $a_ta_{t+2}$, then $C$ is one-sided.
Suppose now that $S_i=\emptyset$; as in the construction of $G''$, we assume that $K_i$ is the only $(\le\!4)$-cycle incident with $a_2$ and $a_3$
in $G'$.  If $C$ contains the edge $a_{13}a_{24}$, then $C$ corresponds to a $(\le\!4)$-cycle in $G'$ containing one of the edges of $K_i$,
which necessarily must be $a_1a_4$; hence, no other edge of $C$ passes through the crosscap and $C$ is one-sided.  If $C$ contains neither $a_{13}$ nor $a_{24}$,
then $C$ is non-contractible in $G$ and separates the crosscaps in $G''$.  If $C$ contained both $a_{13}$ and $a_{24}$,
but not the edge $a_{13}a_{24}$, then since $a_2$ and $a_3$ are not incident with $(\le\!4)$-cycles in $G'$, we conclude that
$a_1a_4$ is incident with two triangles in $G'$.  However, then either one of the triangles or the $4$-cycle contained in their union
is contractible in $G$, which is a contradiction.  It remains to consider the
case that $C$ contains exactly one of $a_{13}$ and $a_{24}$.  By symmetry, assume that $C$ contains $a_{13}$.
Let $e_1'$ and $e_2'$ be the edges incident with $a_{13}$ in $C$,
and let $e_1$ and $e_2$ be the corresponding edges in $G$.  Since no $(\le\!4)$-cycle different from $K_i$ is incident with $a_3$, we may assume that
$e_1$ is incident with $a_1$.  If $e_2$ is incident with $a_3$, then $C$ is one-sided.  If $e_2$ is incident with $a_1$, then $C$ separates the
crosscaps.  We conclude that every $(\le\!4)$-cycle in $G''$ is non-contractible.

If $G''$ is $3$-colorable, then the corresponding $3$-coloring of $G'$ has type $S_i$ on $K_i$ and type $S_j$ on $K_j$.
It follows that every precoloring of $R_1\cup R_2$ extends to a $3$-coloring of $G$, contradicting the criticality of $G$.

Therefore, $G''$ is not $3$-colorable and it contains a $4$-critical subgraph $F$.
By Theorem~\ref{thm-klein}, $F$ is an $(x_1x_2,y_1y_2)$-chain, for some vertices $x_1,x_2,y_1,y_2\in V(G'')$,
and its embedding derived from the embedding of $G''$ is canonical.  Suppose that $S_i=\emptyset$ and that $K_i$ is the only $(\le\!4)$-cycle
in $G'$ incident with $a_2$ and $a_3$.  By symmetry, we can assume that $x_1=a_{13}$ and $x_1x_2$ corresponds to an edge
$a_3v$ in $G'$.  Since $x_1x_2$ is incident with two triangles $x_1x_2v_1$ and $x_1x_2v_2$ in $F$, but $a_3$ is not incident with
a triangle, we have $a_1v_1,a_1v_2\in E(G')$.  Let us remark that $x_2\neq a_{24}$, as otherwise we would similarly
have $a_4v_1, a_4v_2\in E(G')$ and at least one of the cycles $a_1a_4v_1$, $a_1a_4v_2$ and $a_1v_1a_4v_2$ would be contractible in $G$.
Hence, both $v_1$ and $v_2$ are adjacent to the vertex $v$ in $G'$.  Since the $4$-cycle $a_1v_1vv_2$ is non-contractible in $G'$,
using Theorem~\ref{thm-planechar} we can assume that $a_1a_2a_3vv_1$ and $a_1a_4a_3vv_2$ are faces of $G'$ and $a_2$ and $a_4$
have degree two.  On the other hand, if $S_i=\{a_i,a_{i+2}\}$ for some $i\in\{1,2\}$,
then one of $x_1x_2$, $y_1y_2$ is equal to $a_ia_{i+2}$, and since this edge is incident with two triangles in $F$,
it follows that $K_i$ is a subgraph of $F$.  A symmetrical claim holds at $K_j$.
As all faces of $F$ have length at most six, Theorem~\ref{thm-planechar} implies that every face of $F$ not incident with $x_1x_2$ and $y_1y_2$ is also a face of $G'$.
Let us recall that $F$ is a $(x_1x_2,y_1y_2)$-chain, and consequently observe that
in all the cases, $G'$ is a broken chain.

Choose the labeling of $K_i$ and $K_j$ so that $a_1$ and $b_1$ are vertices of degree four in $G'$.
Observe that a precoloring $\psi$ of $K_i\cup K_j$ extends to a $3$-coloring of $G'$
if and only if $\psi(a_1)\neq \psi(a_3)$ or $\psi(b_1)\neq \psi(b_3)$.
Consider a precoloring $\phi$ of $R_1\cup K_j$ that does not extend to a $3$-coloring of $G_{1j}$.
Let $X$ be the graph obtained from $G_{1i}$
in the following way: first, we add the edge $a_1a_3$ and put a crosscap on it.  If $R_1$ is a triangle,
then we paste a crosscap over the cuff incident with $R_1$.  If $R_1$ is a $4$-cycle, then we either add an edge between
two of its vertices or identify its opposite vertices according to the type of $\phi$ on $R_1$ and put a crosscap
in the appropriate place, using the same rules as in the construction of $G''$.  Note that $X$ is embedded in
the Klein bottle so that all contractible cycles have length at least five.  If $X$ is $3$-colorable, then
its $3$-coloring corresponds to a $3$-coloring of $G_{1i}$ that matches $\phi$ on $R_1$ and assigns $a_1$ and $a_3$
different colors.  Hence, this coloring extends to a $3$-coloring of $G_{1j}$ that matches $\phi$ on $R_1\cup K_j$, which is a contradiction.

Therefore, $X$ is not $3$-colorable, and by Theorem~\ref{thm-klein}, $X$ contains a canonical embedding of an $(e_1,e_2)$-chain $F_1$ as a subgraph,
for some edges $e_1,e_2\in E(X)$.  Since $F_1$ contains four one-sided triangles, it follows that $|R_1|=4$.
As in the analysis of $G'$, we conclude that $G_{1i}$ is a broken chain.  By symmetry, $G_{jm}$
is a broken chain as well.  This implies that $G$ is a broken chain.
\end{proof}

The case of a cylinder with two rings of length at most four is now easy to handle using Theorem~\ref{thm:summary},
thanks to the bound on the size of a subgraph that captures $(\le\!4)$-cycles given by Lemma~\ref{thm-compens}.
We will need the following observation.

\begin{lemma}
\mylabel{lem:openweight}
Let $G$ be an $\RR$-critical graph embedded in a surface $\Sigma$ with rings $\RR$ so that every $(\le\!4)$-cycle is non-contractible,
let $G'$ be another $\RR$-critical graph embedded in $\Sigma$ with rings $\RR$ and let $X\subset F(G)$ and $\{(J_f,S_f):f\in F(G')\}$
be a cover of $G$ by faces of $G'$.  Let $f$ be an open $2$-cell face of $G'$ and let $G_1$, \ldots, $G_k$ be the components of the $G$-expansion of $S_f$,
where for $1\le i\le k$, $G_i$ is embedded in the disk with one ring $R_i$.  In this situation, $\sum_{i=1}^k w(G_i,\{R_i\})\le s(|f|)+\el(f)$.
\end{lemma}
\begin{proof}
By Theorem~\ref{thm:diskgirth5} and Lemma~\ref{lem:diskcritical}, we have
$$\sum_{i=1}^k w(G_i,\{R_i\})\le \sum_{i=1}^k s(|R_i|).$$
Note that we have $s(x)+s(y)\le s(x+y)\le s(x)+y$ for every $x,y\ge 5$; hence,
$$\sum_{i=1}^k s(|R_i|)\le s\left(\sum_{i=1}^k |R_i|\right)=s(|f|+\el(f))\le s(|f|)+\el(f).$$
\end{proof}

Let $\cyl$ be a function satisfying the following for all non-negative integers $x$ and $y$:
\begin{itemize}
\item $\cyl(0,0)=0$
\item $\cyl(x,y)=\cyl(y,x)$
\item if $x>0$, then $\cyl(x,y)\ge \cyl(0,y)+x+13$
\item if $x,y>1$, then $\cyl(x,y)\ge \cyl(1,x)+\cyl(1,y)+19$
\item for any non-negative integer $y'<y$, we have $$\cyl(x,y)\ge \cyl(x,y')+s(y-y'+8)\ge \cyl(x,y')+1$$
\item $\cyl(x,y)\ge s(x+y+14)$
\item if $x\ge 4$, then $\cyl(x,y)\ge 886$
\item $\cyl(7,7)\ge 2\cyl(6,7)$
\item if $x\le 4$ and $5\le y\le 6$, then $$\cyl(x,y)\ge (2/3+52\epsilon)(x+y)+20((5\cyl(4,4)+90)/s(5))/3$$
\item if $x\le 7$, then $\cyl(x,7)\ge 3/2(x+7)+20((5\cyl(6,6)+90)/s(5))/3$
\item if $x,y\ge 5$, then $\cyl(x,y)\ge \cyl(4,x)+\cyl(4,y)+\cyl(4,4)$
\end{itemize}
Note that such a function exists, since the lower bounds on $\cyl(x,y)$ only involve values $\cyl(x',y')$ satisfying
either $\max(x',y')<\max(x,y)$, or $\max(x',y')=\max(x,y)$ and $x'+y'<x+y$.

We are now ready to prove the main result of this section.  

\begin{theorem}\mylabel{thm:4cycles}
Let $G$ be a graph embedded in the cylinder with rings $R_1$ and $R_2$ of length at most four.
Suppose that every $(\le\!4)$-cycle in $G$ is non-contractible.
If $G$ is $\{R_1,R_2\}$-critical and not a broken chain, then $w(G,\{R_1,R_2\})\le \cyl(|R_1|,|R_2|)$.
\end{theorem}
\begin{proof}
We proceed by induction, and assume that the claim holds for all graphs with fewer than $|E(G)|$ edges.
By Lemma~\ref{lemma-critshort}, we can assume that $|R_2|=4$.  By Theorem~\ref{thm-aksen}, $G$ is connected,
and thus every face of $G$ is open $2$-cell.
By Lemma~\ref{lem:i012}, $G$ satisfies (I0), (I1) and (I2).  Furthermore, we already observed that every critical
graph without contractible $(\le\!4)$-cycles satisfies (I9), and (I6) and (I8) hold trivially.

Next, we will show that $w(G,\{R_1,R_2\})\le \cyl(|R_1|,|R_2|)$ when (I3) is not satisfied.
\begin{subproof}
Since (I3) is not satisfied, $G$ contains a cutvertex $v$ that is not the main vertex of a vertex-like ring.
Observe that $v$ separates $R_1$ from $R_2$.  Add to $G$ a non-contractible triangle $T$ with vertex set consisting
of $v$ and two new vertices.  For $i=1,2$, let $G_i$ denote the subgraph of the resulting graph drawn between $R_i$ and $T$.
Suppose that $v\in V(R_1)$ (so $G_1=R_1\cup T$); in this case $|R_1|=4$, since $v$ is not the main vertex of a vertex-like ring,
and $T$ forms a non-weak vertex-like ring of $G_2$.  The graph $G_2$ is $\{T,R_2\}$-critical by Lemma~\ref{lemma-crcon},
hence by the induction hypothesis we have $w(G_2,\{T,R_2\})\le \cyl(1,|R_2|)$.
But then $w(G,\{R_1,R_2\})\le w(G_2,\{T,R_2\})+1\le \cyl(1,|R_2|)+1\le \cyl(|R_1|,|R_2|)$
as required.

Hence, we can assume that $v\not\in V(R_1)$, and by symmetry, $v\not\in V(R_2)$.  
It follows that $G_1$ can be seen as embedded in the cylinder with rings $\{R_1,T\}$, where $T$ is a non-weak vertex-like ring.
Note that $G_1$ is $\{R_1,T\}$-critical by Lemma~\ref{lemma-crcon}.  By Corollary~\ref{cor-critshort}, $R_1$ is not a weak vertex-like ring, and thus $|R_1|\ge 1$.
Furthermore, Lemma~\ref{lemma-critshort} implies that if $|R_1|\le 3$, then $G_1$ consists of $R_1$ and an edge between
$v$ and a vertex $w$ of $R_1$.  If that is the case, then $G_2$ is $\{T,R_2\}$-critical, where $T$ is taken as a weak vertex-like ring:
consider any edge $e$ of $G_2$ not belonging to $R_2$.  Since $G$ is $\{R_1,R_2\}$-critical, there exists a precoloring $\phi$ of $R_1$ and $R_2$
that extends to a coloring of $G-e$, but not to $G$.  Let $\psi$ be the precoloring of $\{T,R_2\}$ such that a neighbor $z$ of $v$ in $T$ is assigned the color
$\phi(w)$ and $\psi$ matches $\phi$ on $R_2$ (by the definition of precoloring of a weak vertex-like ring, $z$ is the only vertex of $T$ that is assigned
color by $\psi$).  Note that $\psi$ extends to a coloring of $G_2-e$, but not to $G_2$.  Since the choice of $e$ was arbitrary,
this shows that $G_2$ is $\{T,R_2\}$-critical with $T$ weak.
By the induction hypothesis, we have $w(G_2, \{T,R_2\})\le \cyl(0,|R_2|)$.  Let $f$ be the face of $G_2$ that shares edges with $T$.  We have
$w(G, \{R_1,R_2\})=w(G_2, \{R_2,T\})-s(|f|)+s(|f|+2)\le \cyl(0,|R_2|)+s(|f|+2)-s(|f|)\le \cyl(0,|R_2|)+2\le \cyl(|R_1|,|R_2|)$.

Hence, we can assume that $|R_1|=4$.  Recall that $|R_2|=4$.  Note that $G_i$ is $\{R_i,T\}$-critical for $i\in\{1,2\}$ by Lemma~\ref{lemma-crcon} ($T$ is taken as a non-weak vertex-like ring).
Let $f_1$ and $f_2$ be the faces of $G_1$ and $G_2$, respectively, incident with the edges of $T$.
By the induction hypothesis, we have $w(G, \{R_1,R_2\})\le 2\cyl(1,4)+s(|f_1|+|f_2|-6)-s(|f_1|)-s(|f_2|)\le 2\cyl(1,4)+2\le \cyl(4,4)$.
\end{subproof}
Therefore, we can assume that (I3) holds.

If (I5) is false, then the two adjacent vertices $r_1$ and $r_2$ of degree two belong to a ring of length four, say to $R_2$.
If the face incident with $r_1r_2$ has length five, then a triangle $T$ separates $R_1$ from $R_2$.
By applying induction to the subgraph of $G$ drawn between $R_1$ and $T$, we conclude that
$w(G, \{R_1,R_2\})\le \cyl(|R_1|,3)+s(5)<\cyl(|R_1|,|R_2|)$.  If the face incident with $r_1r_2$ has length at least $6$,
we apply induction to the graph obtained by contracting the edge $r_1r_2$, and obtain $w(G, \{R_1,R_2\})\le \cyl(|R_1|,3)+1\le \cyl(|R_1|,|R_2|)$.
Hence, assume that (I5) holds.

Suppose now that the distance between $R_1$ and $R_2$ is at most four.  We use Lemma~\ref{lem:surfcritical} with $J$ equal to the union of $R_1\cup R_2$
and the shortest path between $R_1$ and $R_2$ and $S$ the only face of $J$; let $G'$ be the unique element of the $G$-expansion of $S$ and let $R$
be its natural ring, where $|R|\le |R_1|+|R_2|+14$ (with equality when the distance between $R_1$ and $R_2$ is four and $|R_1|=|R_2|=0$,
i.e., both $R_1$ and $R_2$ are weak vertex-like rings).
By Theorem~\ref{thm:diskgirth5}, we have $w(G, \{R_1,R_2\})=w(G, \{R\})\le s(|R_1|+|R_2|+14)\le \cyl(|R_1|,|R_2|)$.
Therefore, we can assume that the distance between $R_1$ and $R_2$ is at least five, and in particular
(I7) holds (in (I7), we actually only require that the distance between $R_1$ and $R_2$ is at least four,
however, the stronger statement is needed in the following paragraph).

Consider a path $P$ of length at most four with both ends being ring vertices.  By the previous paragraph, both
ends of $P$ belong to the same ring $R$.  Since $G$ is embedded in the cylinder,
there exists a subpath $Q$ of $R$ such that $P\cup Q$ is a contractible cycle.  Note that $5\le |P\cup Q|\le |P|+3\le 7$,
and by (I9), $P\cup Q$ bounds a face.  
By (I5), $P$ has length at least three.  Therefore, $G$ is well-behaved and satisfies (I4).

Let $M$ be the subgraph of $G$ consisting of all edges incident with $(\le\!4)$-cycles.  Since $G$ is not a broken chain, Lemma~\ref{thm-compens}
implies that $|E(M)|\le 132$.  Note that $M$ captures $(\le\!4)$-cycles of $G$.
If the assumptions of Theorem~\ref{thm:summary} are not satisfied, then
$w(G,\{R_1,R_2\})\le (2/3+26\epsilon)\ell(\{R_1,R_2\})+20|E(M)|/3< 886\le \cyl(|R_1|,|R_2|)$.
Therefore, assume the contrary.

Then, there exists an $\{R_1,R_2\}$-critical graph $G'$ embedded in the cylinder with rings $R_1$ and $R_2$
such that $|E(G')|<|E(G)|$, satisfying conditions (a)--(e) of Theorem~\ref{thm:summary}.  By (b), all
$(\le\!4)$-cycles in $G'$ are non-contractible.  By Theorem~\ref{thm-aksen}, $G'$ is connected, and thus all its faces are open $2$-cell.
Let $X\subset F(G)$ and $\{(J_f,S_f):f\in F(G')\}$ be the cover of $G$ by faces of $G'$ as in (d).  For $f\in F(G')$, let $G^f_1$, \ldots, $G^f_{k_f}$
be the components of the $G$-expansion of $S_f$.  Since $\Sigma_f$ is a disk and all surfaces of the $G$-expansion of $S_f$ are fragments
of $\Sigma_f$, it follows that for $1\le i\le k_f$, $G^f_i$ is embedded in the disk with one ring $R^f_i$.
By the definition of a cover of $G$ by faces of $G'$, we have
$$w(G,\{R_1,R_2\})=\sum_{f\in F(G)} w(f)=\sum_{f\in X} w(f) + \sum_{f\in F(G')} \sum_{i=1}^{k_f} w(G^f_i,\{R^f_i\}).$$

Suppose first that all faces of $G'$ are semi-closed $2$-cell and all vertex-like rings of $G'$ are also vertex-like in $G$.  By Theorem~\ref{thm:summary}(c),
$G'$ has a face of length at least $6$, hence $G'$ is not a broken chain.  Therefore, by induction we have $w(G',\{R_1,R_2\})\le \cyl(|R_1|,|R_2|)$.
Since each internal face of $G'$ is semi-closed $2$-cell, Theorem~\ref{thm:summary}(e) implies that $$\sum_{i=1}^{k_f} w(G^f_i,\{R^f_i\})\le s(|f|)-c(f)$$ for every $f\in F(G')$,
and consequently (using Theorem~\ref{thm:summary}(d) in the last inequality), we have
\begin{align*}
\sum_{f\in F(G')} \sum_{i=1}^{k_f} w(G^f_i,\{R^f_i\})&\le \sum_{f\in F(G')} s(|f|)-c(f)\\
&=w(G',\{R_1,R_2\})-\sum_{f\in F(G')} c(f)\\
&\le w(G',\{R_1,R_2\})-|X|s(6).
\end{align*}
Putting the inequalities together, we obtain 
\begin{align*}
w(G,\{R_1,R_2\})&\le w(G',\{R_1,R_2\})+\left(\sum_{f\in X} w(f)\right)-|X|s(6)\\
&= w(G',\{R_1,R_2\})\le \cyl(|R_1|,|R_2|),
\end{align*}
since the face in $X$ (if any) has length $6$ by the definition of a cover of $G$ by faces of $G'$.

It remains to consider the cases that either a face of $G'$ is not semi-closed $2$-cell or a vertex-like ring of $G'$ is not vertex-like in $G$.
If a face of $G'$ is not semi-closed $2$-cell, then $G'$ contains a cutvertex $v$ that is not the main vertex of a vertex-like ring.
We add to $G'$ a non-contractible triangle $T$ consisting of $v$ and two new vertices.  For $i=1,2$, let $G_i$ denote the subgraph of $G'$ drawn between $R_i$ and $T$.
Similarly to the analysis of the property (I3) for $G$, we show the following: $|R_i|\ge 1$; if $v\in V(R_i)$, then $|R_i|=4$ and $w(G',\{R_1,R_2\})\le \cyl(1,|R_{3-i}|)+1\le \cyl(|R_1|,|R_2|)-11$;
if $|R_i|\le 3$, then $w(G', \{R_1,R_2\})\le \cyl(0,|R_{3-i}|)+2|\le \cyl(|R_1|,|R_2|)-11$;
and if $|R_1|=|R_2|=4$ and $v\not\in V(R_1\cup R_2)$, then $w(G', \{R_1,R_2\})\le 2\cyl(1,4)+2\le \cyl(|R_1|,|R_2|)-11$.

If a vertex-like ring (say $R_2$) of $G'$ is not vertex-like in $G$, then $R_2$ has length $|R_2|=3$ in $G$ and $1$ in $G'$,
and thus again $w(G', \{R_1,R_2\})\le \cyl(|R_1|,1)\le \cyl(|R_1|,|R_2|)-11$.

In both cases Lemma~\ref{lem:openweight} implies
$$\sum_{f\in F(G')} \sum_{i=1}^{k_f} w(G^f_i,\{R^f_i\})\le w(G', \{R_1,R_2\}) + \sum_{f\in F(G')} \el(f)\le w(G', \{R_1,R_2\}) + 10.$$
Combining the inequalities, we have
\begin{align*}
w(G,\{R_1,R_2\})&\le w(G', \{R_1,R_2\}) + 10 + \sum_{f\in X} w(f)\\
&\le  \cyl(|R_1|,|R_2|)-1+\sum_{f\in X} w(f)\\
&< \cyl(|R_1|,|R_2|).
\end{align*}
\end{proof}

\section{Narrow cylinder}\label{sec-narrowcyl}

In this section, we consider graphs embedded in the cylinder with two rings of length at most $7$.
First, let us state an auxiliary result that will also be useful in the case of general surfaces.
Consider a graph embedded in a surface $\Sigma$.
If $K_1$ and $K_2$ are two cycles surrounding a cuff $C$ and $\Delta_1$ and $\Delta_2$ are the open disks bounded by
$K_1$ and $K_2$, respectively, in $\Sigma+\widehat{C}$, then we say that $K_1$ and $K_2$ are {\em incomparable} if
$\Delta_1\not\subseteq \Delta_2$ and $\Delta_2\not\subseteq\Delta_1$.  A \emph{$\Theta$-graph} is a graph consisting
of three paths intersecting exactly in their endvertices.  If $H$ is a $\Theta$-graph appearing as a subgraph of
a graph embedded in a surface with rings, we say that $H$ is \emph{essential} if none of the cycles of $H$ is contractible.

\begin{lemma}\mylabel{lem:concentric}
Let $G$ be a graph in a surface $\Sigma$ with rings $\RR$, such that $G$ is $\RR$-critical,
every $(\le\!4)$-cycle is non-contractible, and no $\Theta$-subgraph of $G$ with at most $12$ vertices is essential.
Let $K_0$ be a cycle in $G$ of length at most seven surrounding a ring $R$, let $C$ be the cuff incident with $R$
and let $\Delta$ be the closed disk in $\Sigma+\widehat{C}$ bounded by $K_0$.
In this situation, at most $10|K_0|$ edges of $G$ drawn outside of $\Delta$
are incident with $(\le\!7)$-cycles surrounding $R$ that are incomparable with $K_0$.
\end{lemma}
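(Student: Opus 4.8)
The plan is to argue about the "annular region" swept out by $(\le\!7)$-cycles surrounding $R$ that are incomparable with $K_0$, and to bound the number of edges in that region by applying Lemma~\ref{lem:diskcritical} (and Theorem~\ref{thm:diskgirth5}) to a suitable disk subgraph. First I would collect the relevant cycles: let $K_1,\ldots,K_m$ be the $(\le\!7)$-cycles surrounding $R$ that are incomparable with $K_0$ and that use at least one edge drawn outside $\Delta$; write $\Delta_i$ for the closed disk in $\Sigma+\hat{C}$ bounded by $K_i$. Since each $K_i$ surrounds $R$, each $\Delta_i$ contains $C$, and since $K_i$ is incomparable with $K_0$ we have $\Delta_0\not\subseteq\Delta_i$ and $\Delta_i\not\subseteq\Delta_0$. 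I would then consider the set $U=\bigcup_i \overline{\Delta_i\setminus\Delta_0}$, the part of each $\Delta_i$ lying outside $\Delta_0$, together with its boundary. Because all the $K_i$ surround the same cuff, any two of them are "nested up to the part inside $\Delta_0$'', and $U$ is (topologically) an annular neighborhood of a subarc of $K_0$; in particular every edge of $G$ drawn outside $\Delta$ and incident with some such $K_i$ lies in $U$ or on its boundary.

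The key step is to bound the edges of $G$ inside $U$. I would pick a cycle $K^\ast$ among the $K_i$ whose disk $\Delta^\ast=\Delta_i$ is maximal with respect to inclusion (or, if there is no single maximal one, take two that together dominate all the others — here the bound $7$ on the lengths, together with the fact that the symmetric differences are small, limits how "spread out'' the $K_i$ can be, and one checks that at most two of them are needed to sandwich everything). Consider the closed region $R^\ast = \overline{\Delta^\ast\setminus\Delta_0}$, bounded by a closed walk contained in $K^\ast\cup K_0$, hence of length at most $|K^\ast|+|K_0|\le 7+|K_0|$. If this bounding walk is a cycle $C^\ast$ and $R^\ast$ is an open disk disjoint from the rings, then by Lemma~\ref{lem:diskcritical} the subgraph $G^\ast$ of $G$ drawn in $R^\ast$ is $C^\ast$-critical, and by Theorem~\ref{thm:diskgirth5} (using (I8)/(I9)-type consequences and the girth $\ge 5$ hypothesis implicit in $\RR$-criticality without contractible $(\le\!4)$-cycles) the number of its edges is linear in $|C^\ast|$; more precisely $w(G^\ast,\{C^\ast\})$ is bounded, and since internal faces have length $\ge 5$ this translates to $|E(G^\ast)|=O(|C^\ast|)=O(|K_0|)$. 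Degenerate cases — the bounding walk not being a cycle, $R^\ast$ not being a disk, or $R^\ast$ meeting a ring other than $R$ — would be handled by first passing to a minimal counterexample and using $\RR$-criticality plus the non-contractibility of short cycles to rule them out, or by cutting $R^\ast$ into disk pieces along the at most $|K_0|$ vertices of $K_0$ on its boundary.

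The main obstacle I expect is the combinatorial/topological bookkeeping of the incomparable cycles: showing that the union of the "outer caps'' $\Delta_i\setminus\Delta_0$ is controlled by boundedly many of the $K_i$ (so that one application, or $O(1)$ applications, of the disk bound suffices rather than summing over all $m$ cycles, which could be unbounded). This is where the hypothesis $|K_i|\le 7$ is essential: two cycles of length $\le 7$ surrounding the same cuff that are incomparable must cross, and a crossing pattern of such short cycles forces their caps to overlap substantially, so the "extremal'' caps among them cover all the rest. Quantifying this — and pinning down that each edge outside $\Delta$ incident with such a $K_i$ lies in the disk subgraph governed by the extremal cycles, giving the explicit constant $10$ in $10|K_0|$ after accounting for the disk bound from Theorem~\ref{thm:diskgirth5} and the length $|K^\ast|+|K_0|$ of the relevant ring — is the technical heart of the argument. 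Once that is in place, the edge count follows by adding up the $O(1)$ disk contributions, each $O(|K_0|)$, with constants chosen so the total is at most $10|K_0|$.
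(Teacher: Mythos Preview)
Your approach is genuinely different from the paper's, and the gap you yourself flag as ``the technical heart'' is real and, as stated, unfilled. You want to cover the set $X$ by the ``outer caps'' $\overline{\Delta_i\setminus\Delta_0}$ of one or two extremal cycles $K^\ast$ and then apply Lemma~\ref{lem:diskcritical} and Theorem~\ref{thm:diskgirth5} to the disk bounded by $K^\ast\cup K_0$. But there is no reason the caps should be dominated by two cycles: different $K_i$ can attach to $K_0$ along disjoint subarcs (for instance one $K_i$ leaving $K_0$ near $v_1,v_2$ and another near $v_4,v_5$), and the girth and length constraints do not prevent several pairwise-incomparable maximal caps from occurring at different locations around $K_0$. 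So the step ``one checks that at most two of them are needed to sandwich everything'' is exactly where the argument is missing, and your proposal gives no mechanism for it. Even if you allowed $O(|K_0|)$ extremal regions (one per attachment location), you would still have to control overlaps between regions and show that the disk bound from Theorem~\ref{thm:diskgirth5} (which bounds weight, not edges directly) yields the precise constant $10$; none of this is addressed.

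The paper does something quite different and avoids the covering problem entirely: it uses a discharging argument. Each edge $x\in X$ is given charge $1$; for the chosen cycle $K\ni x$, one writes $K=P_1\cup P_2$ and $K_0=P_3\cup P_4$ (paths with common ends) so that $P_1\cup P_3$ surrounds $R$, and observes that the closed walk $P_2\cup P_3$ is contractible of length at most $9$. Its symmetric difference $K'$ is then a single contractible cycle, and by Theorem~\ref{thm-planechar} the open disk it bounds is a face or a union of two or three $5$-faces. The charge of $x$ is distributed evenly among the edges of $K'\cap K_0$, with an explicit rule for how charge ``passes through'' the at most two interior edges when $K'$ is not a face. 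A case analysis on the internal face incident with a given edge $e\in E(K_0)$ then shows each edge of $K_0$ receives at most $10$ units, giving $|X|\le 10|K_0|$. The discharging localizes the contribution of each $x$ to a specific short contractible cycle touching $K_0$, so one never needs to globally organize the caps or find extremal cycles.
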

\begin{proof}
Let $X$ be the set of edges drawn outside of $\Delta$ that belong to $(\le\!7)$-cycles surrounding $R$ which are
incomparable with $K_0$.  Let us define a mapping $\xi$ from $X$ to faces of $G$ as follows.

For an edge $x\in X$, choose a $(\le\!7)$-cycle $K$ surrounding $R$ incomparable with $K_0$ and containing $x$.
Note that at least one edge $e_1$ of $E(K)\setminus E(K_0)$ is drawn in $\Delta$.
Let $K=P_1\cup P_2$, where $P_1$ and $P_2$ are paths intersecting only in their endvertices
such that $x\in E(P_2)$ and $P_2$ intersects $\Delta$ exactly in its endvertices.  Let $K_0=P_3\cup P_4$,
where $P_3$ and $P_4$ are paths sharing endvertices with $P_1$ and $P_2$ and the cycle $K'=P_2\cup P_3$ is contractible
(such a cycle exists by the assumptions of the lemma, since $P_2\cup P_3\cup P_4$ is a $\Theta$-subgraph of $G$
with at most $12$ vertices).  Let $\xi(x)$ be the face incident with $x$ that is drawn in the open disk bounded by $K'$.

In the situation of the previous paragraph, let $m_i$ be the length of $P_i$ for $1\le i\le 4$.
Note that the closed walk $P_1\cup P_4$ is contractible, and since $e_1\in E(P_1)\setminus E(P_4)$,
the graph $P_1\cup P_4$ contains a cycle.  Since all $(\le\!4)$-cycles are non-contractible,
we have $m_1+m_4\ge 5$ (note that $P_1\cup P_4$ might contain only non-contractible cycles, but
in that case it must contain at least two of them and the sum of their lengths is at least $6$).
Since $m_1+m_2+m_3+m_4=|K_0|+|K|\le 14$, it follows that $|K'|=m_2+m_3\le 9$.
Since $m_2\le 6$, $K'$ shares at least $|K'|-6$ edges with $K_0$.
By Theorem~\ref{thm-planechar}, one of the following holds:
\begin{itemize}
\item $|K'|=9$, the open disk bounded by $K'$ contains one vertex of degree three, and the incident edges
split the disk into three $5$-faces of $G$.  Or,
\item $|K'|\ge 8$ and a chord of $K'$ splits the open disk bounded by $K'$ into a $5$-face and a $(|K'|-3)$-face of $G$.  Or,
\item the open disk bounded by $K'$ is a face of $G$.
\end{itemize}
Let $f$ be a face of $G$ such that $\xi^{-1}(f)\neq\emptyset$.  Note that $f$ lies outside of $\Delta$, and by the preceding analysis,
one of the following holds.
\begin{itemize}
\item $|f|\le 9$ and $f$ is incident with an edge of $K_0$; or,
\item $|f|\in \{5,6\}$ and the boundary of $f$ shares an edge with a face $f'$ of length at most $11-|f|$
incident with at least $|f|-3$ edges of $K_0$.
\end{itemize}
In the latter case, we say that $f$ is \emph{attached} to $f'$.  For a face $f'$ incident with an edge of $K_0$,
let $$\Xi(f')=\xi^{-1}(f')\cup\bigcup_{\text{$f$ attached to $f'$}} \xi^{-1}(f).$$
Let $m\ge 1$ be the number of edges of $K_0$ incident with $f'$.
If $|f'|\in \{7,8,9\}$ or $m=1$, then $|\Xi(f')|=|\xi^{-1}(f')|\le 8$, and $|\Xi(f')|\le 8m$.
If $|f'|=6$ and $m\ge 2$, then at most four $5$-faces are attached to $f'$,
$|\Xi(f')|\le 20$, and $|\Xi(f')|\le 10m$.  Finally, if $|f'|=5$ and $m\ge 2$, then
at most three $(\le\!6)$-faces are attached to $f'$,
$|\Xi(f')|\le 18$, and $|\Xi(f')|\le 9m$.

Consequently,
$$|X|\le \sum_{f'} \Xi(f')\le 10|K_0|,$$
where the sum goes over all faces $f'$ lying outside of $\Delta$ and incident with an edge of $K_0$.
\end{proof}

We will also need a result on non-contractible cycles near a ring.  For a ring $R$ and integers $l,d\ge 0$, an \emph{$(R,l,d)$-noose}
is a non-contractible $(\le\!l)$-cycle whose distance from $R$ is at most $d$.
\begin{lemma}\label{lem:natr}
Let $G$ be a graph embedded in the cylinder with rings $R_1$ and $R_2$, such that $G$ is $\{R_1,R_2\}$-critical and
every $(\le\!4)$-cycle is non-contractible.  Let $l\le 7$ and $d$ be non-negative integers and let $K'_0$ be an $(R_1,l,d)$-noose of $G$.
Let $Y$ be the set of edges of $G$ either belonging to $(R_1,l,d)$-nooses or drawn between the cycles $R_1$ and $K'_0$.
Then $|Y|<(3|R_1|+3l+5d)/s(5)$.
\end{lemma}
\begin{proof}
Let $C_1$ be the cuff incident with $R_1$.
Let $K_0$ be an $(R_1,l,d)$-noose such that the closed disk $\Delta$ bounded by $K_0$ in $\Sigma+\widehat{C_1}$
contains $K'_0$ and subject to that $\Delta$ is as large as possible.  Observe that every edge of $Y$ not drawn in $\Delta$
belongs to an $(R_1,l,d)$-noose that is incomparable with $K_0$, and by Lemma~\ref{lem:concentric}, there are at most $10l$ such edges.
Let $Q$ be a shortest path between $R_1$ and $K_0$; clearly, $Q$ has length at most $d$.  Let $J=R_1\cup Q\cup K_0$
and let $S$ be the set of faces of $J$ contained in $\Delta$.  The sum of the lengths of these faces is at most $|R_1|+l+2d$.
Let $F_0$ be the set of faces of $G$ contained in $\Delta$.  Using Lemma~\ref{lem:surfcritical} and Theorem~\ref{thm:diskgirth5}, we conclude
$\sum_{f\in F_0} w(f)\le s(|R_1|+l+2d)$.  Hence, the number of edges of $G$ drawn in $\Delta$ is at most
\begin{align*}
\frac{1}{2}\Big(|R_1|+|K_0|+\sum_{f\in F_0} |f|\Bigr)&\le \frac{1}{2}\Big(|R_1|+|K_0|+\sum_{f\in F_0} \frac{5w(f)}{s(5)}\Bigr)\\
&\le \frac{|R_1|+l+5s(|R_1|+l+2d)/s(5)}{2}.
\end{align*}
Hence,
$$|Y|\le10l + \frac{|R_1|+l+5s(|R_1|+l+2d)/s(5)}{2}<\frac{3|R_1|+3l+5d}{s(5)}.$$
\end{proof}

The main result of the first paper of this series~\cite[Theorem~13]{trfree1} together with the result of Aksenov et al.~\cite{bor7c} implies the following.
\begin{theorem}\label{thm:sevbnd}
Let $G$ be a graph embedded in the cylinder with rings $R$ and $R'$, where $|R|\le 7$ and $R'$ is a component of $G$.
Suppose that all $(\le\!4)$-cycles in $G$ are non-contractible and that $G$ has girth at least $|R|-3$.
If $G$ is $\{R,R'\}$-critical and $R$ is an induced cycle, then $|R|=6$ and $G$ contains a triangle $C$ such that all vertices of
$C$ are internal and have mutually distinct and non-adjacent neighbors in $R$.
\end{theorem}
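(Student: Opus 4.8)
The plan is to reduce $G$ to a $\phi$-critical graph for a non-extending precolouring $\phi$ of $R$, then to cap the empty cuff and cut along a short non-contractible cycle, and finally to read off the conclusion from the planar and cylindrical precolouring-extension results of \cite{trfree1,gimbel,bor7c}. Since $G$ is $R$-critical, some precolouring $\phi$ of $R$ does not extend to a $3$-colouring of $G$; replacing $G$ by a $\phi$-critical subgraph --- which still has girth at least $|R|-3$, still has $R$ as an induced cycle, and still has all its $(\le\!4)$-cycles non-contractible, since all three properties pass to subgraphs --- I may assume that $G$ is $\phi$-critical. First I would rule out that $G$ has girth at least five: if it did, then since the other cuff is disjoint from $G$ I could cap it with a disk and regard $G$ as embedded in the disk with the single ring $R$, which does not affect criticality; but no $R$-critical graph of girth at least five lives in the disk with $|R|\le 7$ and $R$ induced, the only exceptional configuration with $|R|\le 7$ being (E0), namely $G=R$, which is not critical (for $|R|\le 7$ this is the planar girth-five precolouring theory underlying \cite{trfree1,bor7c} and Theorem~\ref{thm:diskgirth5}). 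Hence $G$ has a $(\le\!4)$-cycle, and by hypothesis every such cycle is non-contractible and therefore surrounds the hole.

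Next I would choose a non-contractible cycle $K$ of $G$ with $|K|\le 4$ whose bounding open disk $\Delta\subseteq\hat\Sigma$ containing the hole is inclusion-minimal. If $K$ bounds a face we are already in the situation below; otherwise let $G_2$ be the subgraph of $G$ drawn in $\overline{\Delta}$, with the empty hole capped off, so that by Lemma~\ref{lem:diskcritical} the graph $G_2$ is $K$-critical in the disk with ring $K$. But $G_2$ has no $(\le\!4)$-cycle other than $K$: any such cycle $L\subseteq G_2\subseteq\overline{\Delta}$ would, being non-contractible, surround the hole and hence bound an open disk inside $\Delta$ containing the hole, contradicting the minimality of $\Delta$. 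So $G_2$ is triangle-free when $|K|=4$ and has $K$ as its only triangle when $|K|=3$, and in either case a precolouring of $K$ extends to $G_2$ by classical extension theorems (Gr\"otzsch's theorem and its refinements, together with \cite{gimbel,bor7c}); hence $G_2$ cannot be $K$-critical, so $G_2=K$ and $K$ bounds the hole face. We have reduced to the case in which $G$ is $\phi$-critical in the cylinder, its single ring $R$ is an induced cycle with $|R|\le 7$, its hole face is bounded by a $(\le\!4)$-cycle $K$, all its $(\le\!4)$-cycles are non-contractible, and it has girth at least $|R|-3$.

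This is exactly the configuration --- a precoloured cycle on one cuff, a short cycle bounding the other face --- handled by \cite{trfree1} (supplemented by \cite{gimbel} when $|R|=7$, so that $G$ is triangle-free, and by \cite{bor7c} in the cases $|R|\in\{5,6\}$, where the girth hypothesis is vacuous and triangles are allowed). Applying those results with precoloured cycle $R$ and inner cycle $K$, I would run through the finitely many cases $|R|\in\{3,4,5,6,7\}$ and $|K|\in\{3,4\}$. Inducedness of $R$ removes every configuration using a chord of $R$, and the smallness of $|R|$ removes the rest, so in each case either $\phi$ extends to $G$ --- contradicting $\phi$-criticality --- or one is left with $|R|=6$ and a triangle whose vertices are internal and have pairwise distinct, pairwise non-adjacent neighbours on $R$. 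Such a triangle genuinely obstructs extension: a proper $2$-colouring of the $6$-cycle $R$ gives a common colour to the three pairwise non-adjacent vertices of $R$ carrying the triangle's neighbours, which the triangle --- needing all three colours --- cannot avoid. This surviving configuration is precisely the obstruction named in the statement.

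I expect the crux to be this last step, and within it the cases $|R|\in\{5,6\}$, where triangles surrounding the hole are permitted and the structure of $G$ between $R$ and $K$ must be examined closely to confirm that nothing but the special triangle survives. A recurring technical point is checking that capping the cuff and cutting along $K$ preserve all the hypotheses --- the girth bound, the inducedness of $R$, the non-contractibility of $(\le\!4)$-cycles, and the designation of the new ring --- so that the cited planar and cylindrical theorems legitimately apply to the reduced graphs.
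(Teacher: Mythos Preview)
The paper does not prove Theorem~\ref{thm:sevbnd} at all; it simply asserts that the statement follows from \cite{trfree1}, \cite{gimbel}, and \cite{bor7c}, with no further argument. Your proposal is a correct and natural sketch of how to extract the result from those sources---passing to a $\phi$-critical subgraph, capping the unused cuff to rule out girth at least five via the disk theory, using Lemma~\ref{lem:diskcritical} and the short-face extension theorems to force the innermost short non-contractible cycle to bound the hole face, and then deferring the residual finite case analysis to the cited papers---so you have supplied considerably more detail than the authors themselves. Your identification of the final case analysis (especially $|R|\in\{5,6\}$, where triangles are permitted) as the genuine crux is accurate, and that is exactly the content carried by the three citations.
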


We now can prove the main result of this section.

\begin{lemma}\label{lem:cyl47}
Let $G$ be a graph embedded in the cylinder with rings $R_1$ and $R_2$, where
$|R_1|\le |R_2|$ and $5\le |R_2|\le 7$.  Suppose that every $(\le\!4)$-cycle in $G$ is non-contractible.
Furthermore, assume that the following conditions hold:
\begin{itemize}
\item if $|R_1|=4$, then all other $4$-cycles in $G$ are vertex-disjoint from $R_1$,
\item if $|R_1|\ge 5$, then $G$ contains no $(\le\!4)$-cycle, and
\item if $|R_2|=7$, then $G$ contains no triangle distinct from $R_1$.
\end{itemize}
If $G$ is $\{R_1,R_2\}$-critical, then $w(G,\{R_1,R_2\})\le \cyl(|R_1|,|R_2|)$.
\end{lemma}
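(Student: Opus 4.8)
The plan is to proceed by induction on $|E(G)|$.

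First, a sequence of routine reductions lets us assume that $G$ is \emph{generic}. If $G=R_1\cup R_2$ there is nothing to prove. If $G$ has an internal $2$-cut, a short path joining the two rings, a contractible $(\le\!9)$-cycle bounding a disk not of one of the forms permitted by (I9), or any simple closed curve along which it can be usefully split, then Lemmas~\ref{lem:diskcritical}, \ref{lem:surfcritical} and \ref{lemma-crcon} decompose $G$ into $\{R_1,R_2\}$-critical graphs with strictly fewer edges, each embedded in a disk or a cylinder; I would bound the disk pieces by Theorem~\ref{thm:diskgirth5}, the cylinder pieces by the induction hypothesis (or, when a resulting ring has length at most $4$, by the companion bound for such cylinders), and add them, using the arithmetic properties built into the definition of $\cyl$. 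In the cylinder, (I8) is vacuous since every cycle separates $\hat\Sigma$, and (I9) holds automatically by Lemma~\ref{lem:diskcritical} and Theorem~\ref{thm:diskgirth5} since every $(\le\!4)$-cycle of $G$ is non-contractible; using Lemma~\ref{lem:i012} for (I0)--(I2), we may thus assume $G$ satisfies (I0)--(I9) and is well-behaved, and that no non-contractible $(\le\!7)$-cycle disjoint from both rings splits off nontrivial pieces on both sides---otherwise cut along it into two narrower cylinders. When $|R_1|=4$, I would also repeatedly cut along an outermost non-contractible $(\le\!4)$-cycle surrounding $R_1$, using Lemma~\ref{lem:natr} to see that the cut-off piece is controlled, until the only remaining $(\le\!4)$-cycle is $R_1$; then set $M=R_1$, while if $|R_1|\ge5$ set $M$ null. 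In both cases $M$ captures $(\le\!4)$-cycles, and since $g=0$, $|\RR|=2$ and $\ell(\RR)\le14$, the threshold $8g+8|\RR|+(2/3+26\epsilon)\ell(\RR)+20|E(M)|/3-16$ of Theorem~\ref{thm:summary} is at most an explicit constant $B$, and one checks from the definition that $\cyl(|R_1|,|R_2|)\ge B$.

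If $w(G,\RR)\le B$ we are done; otherwise Theorem~\ref{thm:summary} yields an $\{R_1,R_2\}$-critical graph $G'$ in the cylinder with $|E(G')|<|E(G)|$ satisfying (a)--(e). By (c), and one further decomposition if needed, we may assume every internal face of $G'$ is closed $2$-cell or omnipresent, and by Lemma~\ref{lem:crit3conn} that $G'$ satisfies (I6). Then Theorem~\ref{thm:summary}(d) gives a cover of $G$ by faces of $G'$ with $\sum_f\el(f)\le10$ and $\sum_f c(f)\ge|X|s(6)$; (e) gives $\sum_i w(G_i,\{R_i\})\le s(|f|)-c(f)$ for each closed $2$-cell face $f$; and for each omnipresent face the weight of the corresponding subgraph of $G$ is bounded by $c(f)$ plus a constant, via the definition of $c$ together with Theorem~\ref{thm:diskgirth5}. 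Summing over $F(G')$ then bounds $w(G,\RR)$ by roughly $w(G',\RR)-\sum_f c(f)+|X|s(6)$, and with the induction hypothesis $w(G',\RR)\le\cyl(|R_1|,|R_2|)$ this reduces to an arithmetic inequality among $\cyl(|R_1|,|R_2|)$, $s(5)$, $s(6)$ and the elasticity budget $10$, checked directly. Applying the induction hypothesis to $G'$, however, first requires verifying that $G'$ again satisfies the hypotheses of the lemma: $(\le\!4)$-cycles of $G'$ are non-contractible by (b), and any $(\le\!4)$-cycle, triangle, or short-ring chord of $G'$ pulls back, via (b), to a non-contractible cycle of $G$ of length at most three larger; by the girth and triangle hypotheses on $G$, and the near-ring Lemmas~\ref{lem:near6}, \ref{lem:near7} and Theorem~\ref{thm:sevbnd}, this cycle is either excluded outright or eliminated by one final reduction near the relevant ring. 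In particular, the triangle clause of (b) is precisely what preserves the condition ``no triangle bound to the $6$-ring'' when $|R_2|=6$, and ``$R_2$ induced'' when $|R_2|\in\{6,7\}$.

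The hard part will be this last step: transporting the delicate side conditions---induced rings, no triangle bound to a $6$-ring, $4$-cycles disjoint from a $4$-ring---through the reduction of Theorem~\ref{thm:summary} so that the induction genuinely applies, while simultaneously keeping the $\cyl$-arithmetic tight enough that every inequality closes. Lemmas~\ref{lem:natr}, \ref{lem:near6}, \ref{lem:near7} and Theorem~\ref{thm:sevbnd} are exactly the tools that bound the amount of irregular structure that can sit near a short ring, reducing it to finitely many configurations to be treated by hand while Theorem~\ref{thm:summary} governs everything else.
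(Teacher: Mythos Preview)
Your overall architecture matches the paper's, but there is a genuine gap in how you build the subgraph $M$ for Theorem~\ref{thm:summary}. You propose $M=R_1$ when $|R_1|=4$ and $M=\emptyset$ otherwise. That is enough for $M$ to \emph{capture $(\le\!4)$-cycles}, but it is not enough to transport the side hypotheses to $G'$. Condition~(b) of Theorem~\ref{thm:summary} says that a $(\le\!4)$-cycle $C'$ of $G'$ pulls back to a non-contractible cycle $C$ in $G$ with $|C|\le|C'|+3$ and $C\not\subseteq M$. The usefulness of this is entirely in the clause $C\not\subseteq M$: with $M$ empty it tells you nothing beyond the existence of a short non-contractible cycle in $G$, which $G$ may well have. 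Concretely, take $|R_1|=5$, $|R_2|=7$; then $G$ has no triangle but can have non-contractible $5$- and $6$-cycles, and a triangle in $G'$ (possibly far from either ring) would pull back to one of those. You then cannot invoke the induction hypothesis on $G'$, and ``one final reduction near the relevant ring'' does not help because the offending triangle need not be near any ring.

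The paper's fix is to load $M$ with exactly the cycles whose presence would be forced by a violation in $G'$: all non-contractible $(\le\!k)$-cycles with $k=6$ when $|R_2|=7$ and $k=4$ otherwise, all non-contractible $(\le\!7)$-cycles meeting $R_1$ when $|R_1|=4$, all non-contractible $(\le\!6)$-cycles bound to a $6$-ring, and all non-contractible $7$-cycles sharing at least four vertices with a $7$-ring. With this $M$, each of the four bullets for $G'$ follows directly from (b) (including its ring-vertex and bound-triangle clauses). Lemmas~\ref{lem:concentric}, \ref{lem:natr}, \ref{lem:near6}, \ref{lem:near7} are then used not to ``reduce finitely many configurations'' but to bound $|E(M)|$ by an explicit constant, so that the threshold $8g+8|\RR|+(2/3+26\epsilon)\ell(\RR)+20|E(M)|/3-16$ still sits below $\cyl(|R_1|,|R_2|)$. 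Even so, when $|R_1|\ge 5$ the graph $G'$ may acquire a non-contractible $(\le\!4)$-cycle not covered by the hypotheses; the paper does not apply induction directly in that case but cuts $G'$ along two outermost such cycles into three cylinders, bounding the middle one by Theorem~\ref{thm:4cycles} (with Theorem~\ref{thm:summary}(a) limiting a possible broken chain to four faces) and the outer two by induction. Your sketch does not account for this step either.
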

\begin{proof}
As the induction hypothesis, we assume that the claim holds for all graphs with fewer than $|E(G)|$ edges.
If $G$ is disconnected, then by Theorems~\ref{thm-planechar}, \ref{thm-aksen} and \ref{thm:sevbnd}, we conclude that
$|R_1|\le 4$ and $R_1$ is a component of $G$, $|R_2|\in\{6,7\}$, and either the component of $G$ containing $R_2$ consists of $R_2$ with a chord, or
$|R_2|=6$ and the component of $G$ containing $R_2$ consists of $R_2$, a triangle $T$, and three edges joining distinct
vertices of $T$ to distinct non-adjacent vertices of $R_2$.  Consequently, $w(G,\{R_1,R_2\})\le 8 + s(5)\le \cyl(|R_1|,|R_2|)$.
Hence, we can assume that $G$ is connected.

Note that $G$ satisfies (I0), (I1), (I2), (I6), (I8) and (I9) by Theorem~\ref{thm-planechar} and Lemmas~\ref{lem:i012} and \ref{lem:crit3conn}.
The cases that $G$ has a face that is not semi-closed $2$-cell or that the distance between $R_1$ and $R_2$ is
at most four are dealt with in the same way as in the proof of Theorem~\ref{thm:4cycles},
hence assume that (I3) and (I7) hold.

If $P$ is a path of length at most four with both ends being ring vertices and otherwise disjoint from the rings, then by the previous paragraph
we can assume both ends belong to the same ring $R_i$ for some $i\in\{1,2\}$.
Since $G$ is embedded in the cylinder, there exists a subpath $Q$ of $R_i$ such that $P\cup Q$ is a contractible cycle.
Let us consider the case that $|Q|>|P|$, and let $Q'$ be the path with edge set $E(R_i)\setminus E(Q)$.
Note that $Q'\cup P$ is a non-contractible cycle shorter than $|R_i|$. We apply induction (or Theorem~\ref{thm:4cycles})
to the subgraph of $G$ between $R_{3-i}$ and $Q'\cup P$.  Furthermore, we use Theorem~\ref{thm:diskgirth5} to bound the weight of
the subgraph embedded in the disk bounded by $Q\cup P$.  We conclude that
$w(G,\{R_1,R_2\})\le\cyl(|R_{3-i}|, |Q'\cup P|)+s(|Q\cup P|)=\cyl(|R_{3-i}|,|Q'\cup P|)+s(|R_i|-|Q'\cup P|+2|P|)\le \cyl(|R_{3-i}|,|R_i|)$,
since $2|P|\le 8$.

Therefore, we can assume that $|Q|\le |P|$ for each such path $P$.  This implies that (I4) holds.
Furthermore, $|P\cup Q|\le 8$, and by Theorem~\ref{thm-planechar}, at most two faces of $G$ are in the open disk bounded
by $P\cup Q$.  Furthermore, if there are two, then $|P|=|Q|=4$ and the unique edge in the disk joins the middle vertices of $P$ and $Q$.

Suppose that (I5) is false, and a non-vertex-like ring $R_i$ for some $i\in\{1,2\}$ contains adjacent vertices $r_1$ and $r_2$ of degree two.
By the previous paragraph, the face incident with $r_1r_2$ has length
at least $6$.  We apply induction or Theorem~\ref{thm:4cycles} to the graph obtained by contracting the edge
$r_1r_2$ (in the latter case, observe that the graph is not a broken chain, since if $|R_1|=4$, then all other non-contractible
$4$-cycles are vertex-disjoint from $R_1$).  We conclude that $w(G, \{R_1,R_2\})\le \cyl(|R_{3-i}|,|R_i|-1)+1\le \cyl(|R_1|,|R_2|)$.  Hence, assume that (I5) holds.
Together with the observations from the previous paragraph, this implies that $G$ is well-behaved.

If $|R_1|=|R_2|=7$ and $G$ contains a non-contractible $(\le\!6)$-cycle, then by induction we have
$w(G, \{R_1,R_2\})\le 2\cyl(6,7)\le \cyl(7,7)$, hence we can assume that if $|R_1|=|R_2|=7$, then
all non-contractible cycles have length at least seven.

Suppose that $|R_i|\in \{6,7\}$ for some $i\in\{1,2\}$ and $G$ contains an $(R_i,4,1)$-noose $C$.  By the assumptions, we have $|R_1|\le 4$,
and thus $i=2$.  The subgraph of $G$ drawn between $R_1$ and $C$ is not a broken chain, since if $|R_1|=4$,
then $R_1$ is vertex-disjoint from all other $4$-cycles.  Let $Q$ be a shortest path between $C$ and $R_2$; we have $|Q|\le 1$,
since $C$ is an $(R_2,4,1)$-noose.  We apply Theorem~\ref{thm:4cycles} to the subgraph of $G$ between $R_1$ and $C$, and
Lemma~\ref{lem:surfcritical} (with $S=R_1\cup Q\cup C$) and Theorem~\ref{thm:diskgirth5} to the subgraph of $G$ between $R_2$ and $C$,
concluding that $w(G, \{R_1,R_2\})\le \cyl(|R_1|,4)+s(13)\le \cyl(|R_1|,|R_2|)$.  Hence, we can assume that $G$ does not contain
$(R_i,4,1)$-nooses with $|R_i|\in \{6,7\}$.

Let $k=6$ if $|R_2|=7$ and $k=4$ otherwise.  Let $M$ be the minimal subgraph of $G$ such that
\begin{itemize}
\item $E(M)$ contains all edges incident with non-contractible $(\le k)$-cycles,
\item if $|R_1|=4$, then $E(M)$ contains all edges of all $(R_1,7,3)$-nooses,
\item if $|R_1|=4$ and there exists an $(R_1,4,3)$-noose
vertex-disjoint from $R_1$, then for some such noose $K$, the set $E(M)$ includes all edges drawn between $R_1$ and $K$,
\item if $|R_i|=6$ for some $i\in\{1,2\}$, then $E(M)$ includes all edges of $(R_i, 6, 1)$-nooses, and
\item if $|R_i|=7$ for some $i\in\{1,2\}$, then $E(M)$ includes all edges of $(R_i, 7, 0)$-nooses.
\end{itemize}
Let us bound the number of edges of $M$.  Suppose that there exists a non-contractible $(\le\!k)$-cycle $C$, and choose $C$ so that
the closed subset $\Sigma_1$ of $\Sigma$ between $R_1$ and $C$ is as large as possible.
Let $G_1$ be the subgraph of $G$ drawn in $\Sigma_1$.
If $|C|\le 4$, then by assumptions $|R_1|\le 4$ and either $|R_1|<4$ or $R_1$ is vertex-disjoint from all other $4$-cycles;
and in particular, $G_1$ is not a broken chain.  By Theorem~\ref{thm:4cycles} (or by Lemma~\ref{lem:surfcritical} and Theorem~\ref{thm:diskgirth5}
when $R_1$ and $C$ intersect), the sum of the weights of the faces of $G_1$ is at most $\max(\cyl(4,4),s(8))\le \cyl(k,k)$.
If $|C|>4$, then $k=6$ and $|R_2|=7$, and thus $|R_1|\le 6$ (since all non-contractible cycles have length at least seven when $|R_1|=|R_2|=7$).
By the induction hypothesis (or by Lemma~\ref{lem:surfcritical} and Theorem~\ref{thm:diskgirth5}
when $R_1$ and $C$ intersect), the sum of the weight of the faces of $G_1$ is again at most $\max(\cyl(6,6),s(12))=\cyl(k,k)$.
In either case, at most $5\cyl(k,k)/s(5)$ edges of $G$ are drawn in $\Sigma_1$,
and by Lemma~\ref{lem:concentric}, at most $10k+5\cyl(k,k)/s(5)<(5\cyl(k,k)+1)/s(5)$ edges of $G$ are incident with
non-contractible $(\le k)$-cycles.  By Lemma~\ref{lem:natr}, we conclude that
$|E(M)|< (5\cyl(k,k)+90)/s(5)$.

Note that $M$ captures $(\le\!4)$-cycles of $G$, and by the preceding estimate and the definition of $\cyl$,
we have $(2/3+26\epsilon)(|R_1|+|R_2|)+20|E(M)|/3<\cyl(|R_1|,|R_2|)$;
therefore, we can assume that we can apply Theorem~\ref{thm:summary}.
Let $G'$ be the $\{R_1,R_2\}$-critical graph embedded in the cylinder with rings $R_1$ and $R_2$
such that $|E(G')|<|E(G)|$, satisfying the conditions of Theorem~\ref{thm:summary}.

In particular, (b) implies that all $(\le\!4)$-cycles in $G'$ are non-contractible.
Furthermore, using the choice of $M$ we conclude that the following conditions hold.
\begin{itemize}
\item \emph{If $|R_2|=7$, then $G'$ contains no triangle distinct from $R_1$.}  Indeed,
consider a triangle $C'$ in $G'$, and let $C$ be the corresponding non-contractible cycle in $G$ from (b) of Theorem~\ref{thm:summary},
of length at most $|C'|+3=6$.  By the choice of $M$, we have $C\subseteq M$, and thus $|C|=|C'|$ and
$C\cap (R_1\cup R_2)\subseteq C'$.  By the assumptions, $R_1$ is the only triangle of $G$, and thus
$C=R_1$ and $R_1\subseteq C'$, implying $C'=R_1$.
\item \emph{If $|R_1|=4$, then all other $4$-cycles in $G'$ are vertex-disjoint from $R_1$.}  Consider a $4$-cycle $C'$
in $G'$ which intersects $R_1$ in a vertex $r$, and let $C$ be the corresponding non-contractible cycle in $G$ from (b) of Theorem~\ref{thm:summary},
of length at most $|C'|+3=7$.  By part 1. of (b), there exists a path $Q$ of length at most three in $G-E(M)$ from $r$ to $C$,
and thus $C$ is an $(R_1,7,3)$-noose.  By the choice of $M$, we have $C\subseteq M$, and thus $|C|=|C'|$ and $C\cap (R_1\cup R_2)\subseteq C'$.
If $C$ were vertex-disjoint from $R_1$, then $G$ would contain an $(R_1,4,3)$-noose vertex-disjoint from $R_1$, and thus $M$ would include all
edges of $G$ incident with $R_1$, contradicting the existence of the path $Q$ in $G-E(M)$.  Hence, $C$ intersects $R_1$, and thus $C=R_1$ by the assumptions.
It follows that $R_1\subseteq C'$, and thus $C'=R_1$. 
\item \emph{If $6\le |R_i|\le 7$ for some $i\in\{1,2\}$, then $R_i$ is an induced cycle in $G'$.} Otherwise, a chord of $R_i$
together with a subpath of $R_i$ would form a non-contractible $(|R_i|-3)$-cycle $C'$.  Let $C$ be the corresponding non-contractible cycle in $G$ from (b) of Theorem~\ref{thm:summary},
of length at most $|C'|+3\le 7$.  Since $C'$ shares at least three vertices with $R_i$, part 1. of (b) implies that $C$ intersects $R_i$,
and thus $C$ is an $(R_i,7,0)$-noose.  By the choice of $M$, we have $C\subseteq M$, and thus $|C|=|C'|$ and $C$ is an $(R_i,4,0)$-noose.
However, we argued before that we can assume that $G$ does not contain $(R_i,4,1)$-nooses with $|R_i|\in\{6,7\}$.
\item \emph{If $|R_i|=6$ for some $i\in\{1,2\}$, then $G'$ contains no triangle $T$ such that 
all vertices of $T$ are internal and have non-adjacent neighbors in $R_i$.}  Otherwise, let $C$ be the corresponding non-contractible cycle in $G$ from (b) of Theorem~\ref{thm:summary},
of length at most $|T|+3=6$.  By part 4. of (b), $C$ is an $(R_i,6,1)$-noose. By the choice of $M$, we have $C\subseteq M$, and thus $|C|=|C'|$ and $C$ is an $(R_i,3,1)$-noose.
However, we argued before that we can assume that $G$ does not contain $(R_i,4,1)$-nooses with $|R_i|\in\{6,7\}$.
\end{itemize}
These constraints enable us to apply Theorem~\ref{thm:sevbnd} to show that $G'$ is connected.
It follows that all its faces are open $2$-cell.
Furthermore, the assumptions on non-contractible cycles from the statement of Lemma~\ref{lem:cyl47}
are satisfied for $G'$, except that $G'$ can contain non-contractible $(\le\!4)$-cycles even if $|R_1|\ge 5$.

Let $X\subset F(G)$ and $\{(J_f,S_f):f\in F(G')\}$ be the cover of $G$ by faces of $G'$ as in Theorem~\ref{thm:summary}(d).
For $f\in F(G')$, let $G^f_1$, \ldots, $G^f_{k_f}$
be the components of the $G$-expansion of $S_f$, where for $1\le i\le k_f$, $G^f_i$ is embedded in the disk with one ring $R^f_i$.
We have $$w(G,\{R_1,R_2\})=\sum_{f\in F(G)} w(f)=\sum_{f\in X} w(f) + \sum_{f\in F(G')} \sum_{i=1}^{k_f} w(G^f_i,\{R^f_i\}).$$

The cases that not all faces of $G'$ are semi-closed $2$-cell, or that $R_1$ is a vertex-like ring in $G'$ but not in $G$,
are dealt with in the same way as in the proof of
Theorem~\ref{thm:4cycles}. Hence, assume that all faces of $G'$ are semi-closed $2$-cell and that $R_1$ is vertex-like in $G'$ only if it is vertex-like in $G$.
If $G'$ does not satisfy the assumptions of Lemma~\ref{lem:cyl47}, then
$|R_1|\ge 5$ and $G'$ contains a $(\le\!4)$-cycle.
Let $C_1$ and $C_2$ be the $(\le\!4)$-cycles in $G'$ such that the closed subset $\Sigma'\subseteq\Sigma$ between $C_1$ and $C_2$ is
as large as possible, and observe that all $(\le\!4)$-cycles in $G'$ belong to the subgraph $G_c$ of $G'$ embedded in $\Sigma'$.
By Theorem~\ref{thm:summary}(a), if $G_c$ is a broken chain, then
it has at most four faces.  Therefore, Theorem~\ref{thm:4cycles} implies that
the total weight of the faces of $G_c$ is at most $\cyl(4,4)$.  Applying induction to the
subgraphs of $G'$ between $R_1$ and $C_1$ and between $R_2$ and $C_2$, we have
$w(G',\{R_1,R_2\})\le \cyl(4,|R_1|)+\cyl(4,|R_2|)+\cyl(4,4)\le \cyl(|R_1|,|R_2|)$.

If $G'$ satisfies the assumptions of Lemma~\ref{lem:cyl47}, then
the same inequality $w(G',\{R_1,R_2\})\le \cyl(|R_1|,|R_2|)$ follows by induction.
Since each face of $G'$ is semi-closed $2$-cell, we conclude that
$w(G,\{R_1,R_2\})\le w(G',\{R_1,R_2\})\le \cyl(|R_1|,|R_2|)$
as in the proof of Theorem~\ref{thm:4cycles}.
\end{proof}

\section{Graphs on surfaces}\label{sec-surfaces}

Let $\gen(g,t,t_0,t_1)$ be a function defined for non-negative integers $g$, $t$, $t_0$ and $t_1$
such that $t\ge t_0+t_1$ as
$$\gen(g,t,t_0,t_1)=120g+48t-4t_1-5t_0-120.$$
Let $\surf(g,t,t_0,t_1)$ be a function defined for non-negative integers $g$, $t$, $t_0$ and $t_1$
such that $t\ge t_0+t_1$ as
\begin{itemize}
\item $\surf(g,t,t_0,t_1)=\gen(g,t,t_0,t_1)+116-42t=8-4t_1-5t_0$ if $g=0$ and $t=t_0+t_1=2$,
\item $\surf(g,t,t_0,t_1)=\gen(g,t,t_0,t_1)+114-42t=6t-4t_1-5t_0-6$ if $g=0$, $t\le 2$ and $t_0+t_1<2$, and
\item $\surf(g,t,t_0,t_1)=\gen(g,t,t_0,t_1)$ otherwise.
\end{itemize}

We will need the following properties of the function $\surf$:
\begin{lemma}\label{lemma-surfineq}
If $g$, $g'$, $t$, $t_0$, $t_1$, $t'_0$, $t'_1$ are non-negative integers, then
the following holds:
\begin{itemize}
\item[{\rm (a)}] Assume that if $g=0$ and $t\le 2$, then $t_0+t_1<t$.
If $t\ge 2$, $t'_0\le t_0$, $t'_1\le t_1$ and $t'_0+t'_1\ge t_0+t_1-2$, then
$\surf(g,t-1,t'_0, t'_1)\le\surf(g,t,t_0,t_1)-1$.
\item[{\rm (b)}] If $g' < g$ and either $g'>0$ or $t\ge 2$, then
$\surf(g',t,t_0, t_1)\le \surf(g,t,t_0, t_1)-120(g-g')+32$.
\item[{\rm (c)}] Let $g''$, $t'$, $t''$, $t''_0$ and $t''_1$
be nonnegative integers satisfying $g=g'+g''$, $t=t'+t''$, $t_0=t'_0+t''_0$, $t_1=t'_1+t''_1$,
either $g''>0$ or $t''\ge 1$,
and either $g'>0$ or $t'\ge 2$.  Then,
$\surf(g',t',t'_0, t'_1)+\surf(g'',t'',t''_0, t''_1)\le \surf(g,t,t_0, t_1)-\delta$,
where $\delta=16$ if $g''=0$ and $t''=1$ and $\delta=56$ otherwise.
\item[{\rm (d)}] If $g\ge 2$, then $\surf(g-2, t, t_0, t_1)\le \surf(g, t, t_0, t_1)-124$
\end{itemize}
\end{lemma}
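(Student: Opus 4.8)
The plan is to peel the affine ``bulk'' $\gen$ off $\surf$ and control the small, bounded correction that remains. Set $\rho(g,t,t_0,t_1)=\surf(g,t,t_0,t_1)-\gen(g,t,t_0,t_1)$. Reading off the definition of $\surf$, one sees that $\rho=0$ unless $g=0$ and $t\le 2$; and when $g=0$ and $t\le 2$ we have $\rho=116-42t=32$ if $t_0+t_1=t=2$, and $\rho=114-42t$ if $t_0+t_1<2$, so $\rho\in\{30,32\}$ when $t=2$, $\rho=72$ when $t=1$, and $\rho=114$ when $t=0$. In particular $0\le\rho\le 114$ always, $\rho\le 32$ whenever $t\ge 2$, and $\rho=0$ whenever $g\ge 1$ or $t\ge 3$. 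I will also use the two immediate identities $\gen(g',t,t_0,t_1)-\gen(g,t,t_0,t_1)=-120(g-g')$ and, when $g=g'+g''$, $t=t'+t''$, $t_0=t'_0+t''_0$, $t_1=t'_1+t''_1$, the additivity $\gen(g',t',t'_0,t'_1)+\gen(g'',t'',t''_0,t''_1)=\gen(g,t,t_0,t_1)-120$.

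For (b) and (d) the point is that the larger-genus term never falls in the exceptional region. In (b), $g>g'\ge 0$ forces $g\ge 1$, so $\surf(g,t,t_0,t_1)=\gen(g,t,t_0,t_1)$, while the hypothesis ``$g'>0$ or $t\ge 2$'' ensures $\rho(g',t,t_0,t_1)\le 32$ (it is nonzero only if $g'=0$, and then $t\ge 2$). Combining with the genus identity, $\surf(g',t,t_0,t_1)=\gen(g,t,t_0,t_1)-120(g-g')+\rho(g',t,t_0,t_1)\le\surf(g,t,t_0,t_1)-120(g-g')+32$, with equality exactly when $g'=0$ and $t_0+t_1=t=2$. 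Part (d) is the same computation with no lower bound on $t$, so only $\rho(g-2,t,t_0,t_1)\le 114$ is available; as $g\ge 2$ gives $\surf(g,t,t_0,t_1)=\gen(g,t,t_0,t_1)$, we get $\surf(g-2,t,t_0,t_1)\le\surf(g,t,t_0,t_1)-240+114\le\surf(g,t,t_0,t_1)-124$, with two units of slack in the worst case.

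For (c), I would first observe that $\surf(g,t,t_0,t_1)=\gen(g,t,t_0,t_1)$: immediate if $g\ge 1$, and if $g=0$ then the hypotheses force $g'=g''=0$, hence $t'\ge 2$ and $t''\ge 1$, hence $t\ge 3$. The additivity identity then gives $\surf(g',t',t'_0,t'_1)+\surf(g'',t'',t''_0,t''_1)=\gen(g,t,t_0,t_1)-120+\rho(g',t',t'_0,t'_1)+\rho(g'',t'',t''_0,t''_1)$. Here $\rho(g',t',t'_0,t'_1)\le 32$, since it is nonzero only if $g'=0$ and $t'\le 2$, and then $t'=2$ by hypothesis. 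If $g''=0$ and $t''=1$ then $\rho(g'',t'',t''_0,t''_1)=72$, the two corrections sum to at most $104$, and the claimed bound with $\delta=16$ follows; otherwise $\rho(g'',t'',t''_0,t''_1)\le 32$ (when $g''=0$ the hypothesis excludes $t''=0$, and here $t''\ne 1$, so $t''=2$), the corrections sum to at most $64$, and the bound with $\delta=56$ follows. Both cases are tight, realizing $\rho=32$ (resp.\ $\rho=32$ and $\rho=72$) in the summands.

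For (a) I would compute $\gen(g,t-1,t'_0,t'_1)-\gen(g,t,t_0,t_1)=-48+4(t_1-t'_1)+5(t_0-t'_0)$ (the inequality of course presupposes $t'_0+t'_1\le t-1$, so that its left side is defined), and note that $0\le t'_0\le t_0$, $0\le t'_1\le t_1$, $t'_0+t'_1\ge t_0+t_1-2$ give $(t_0-t'_0)+(t_1-t'_1)\le\min(2,t_0+t_1)$, so this quantity is at most $-38$ in general and at most $-43$ when $t=2$ (where the extra hypothesis of (a) forces $t_0+t_1\le 1$). If $g\ge 1$ or $t\ge 4$ both corrections vanish and we are done; if $g=0$ and $t=3$ then $\rho(0,2,t'_0,t'_1)\le 32<38$; and if $g=0$ and $t=2$ then $\rho(0,2,t_0,t_1)=30$ and $\rho(0,1,t'_0,t'_1)=72$, so the difference of $\surf$ values is at most $72-30-43<0$. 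Thus $\surf(g,t-1,t'_0,t'_1)<\surf(g,t,t_0,t_1)$ in every case. The only real work is this bookkeeping: tracking which of the two (in (a), (b), (d)) or three (in (c)) evaluations of $\surf$ can enter the small region $\{g=0,\ t\le 2\}$ and invoking there the sharpened bound on $\rho$; the mild lower bounds on $t$, $t'$, $t''$ in the hypotheses are precisely what rules out the $g=0$, $t\le 1$ configurations that would make $\rho$ too large, and those extremal cases are exactly where the constants $32$ in (b) and $16$, $56$ in (c) are tight.
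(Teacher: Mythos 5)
Your proposal is correct, and the approach is essentially the same as the paper's: both proofs reduce to tracking the bounded correction $\surf-\gen$ (your $\rho$), which the paper handles by case analysis in each part rather than giving $\rho$ a name, using the affine additivity of $\gen$ in exactly the way you do. Your write-up is a bit more systematic (explicitly cataloguing $\rho\in\{0,30,32,72,114\}$ once, rather than redoing the estimate per part) and your version of (d) obtains $126$ rather than the paper's looser $124$, but there is no substantive difference in method; your parenthetical remark in (a) that the hypothesis implicitly requires $t'_0+t'_1\le t-1$ is a fair observation that the paper leaves tacit.
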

\begin{proof}
Let us consider the claims separately.
\begin{itemize}
\item[{\rm (a)}] If $g=0$ and $t=2$, then $\surf(g,t,t_0,t_1)\ge 1$, while $\surf(g,t-1,t'_0, t'_1)\le 0$.
If $g=0$ and $t=3$, then $\surf(g,t,t_0,t_1)\ge 9$ and $\surf(g,t-1,t'_0, t'_1)\le 6$.  Finally,
if $g>0$ or $t>3$, then $\surf(g,t,t_0,t_1)=\gen(g,t,t_0,t_1)$ and $\surf(g,t-1,t'_0, t'_1)=\gen(g,t-1,t'_0, t'_1)$,
and $\gen(g,t,t_0,t_1)-\gen(g,t-1,t'_0, t'_1)=48-5(t_0-t'_0)-4(t_0-t'_0)\ge 48-5(t_0+t_1-t'_0-t'_1)\ge 38$.
\item[{\rm (b)}] If $g'>0$ or $t>2$, then $\surf(g',t,t_0, t_1)=\gen(g',t,t_0, t_1)$ and
we have $\surf(g',t,t_0, t_1)=\surf(g,t,t_0, t_1)-120(g-g')$.  If $g'=0$ and $t=2$, then
$\surf(g',t,t_0, t_1)-\surf(g,t,t_0, t_1)+120(g-g')\le 116-42t=32$.
\item[{\rm (c)}] Suppose first that $g''=0$ and $t''=1$, i.e., we have $g=g'$ and $t=t'+1$.
If $g>0$, then $\surf(g,t,t_0, t_1)-\surf(g'',t'',t''_0, t''_1)-\surf(g',t',t'_0, t'_1)=\gen(g,t,t_0, t_1)+(4t''_1+5t''_0)-\gen(g',t',t'_0, t'_1)=48$.
If $g=0$, then $t'\ge 2$ and we have
$\surf(g',t',t'_0, t'_1)\le \gen(g',t',t'_0, t'_1)+116-2\cdot 42=\gen(g',t',t'_0, t'_1)+32$.  Hence,
$\surf(g,t,t_0, t_1)-\surf(g'',t'',t''_0, t''_1)-\surf(g',t',t'_0, t'_1)\ge \gen(g,t,t_0, t_1)+(4t''_1+5t''_0)-(\gen(g',t',t'_0, t'_1)+32)=16$.
In both cases, the claim follows.

Therefore, we can assume that if $g''=0$, then $t''\ge 2$.  Therefore, we have
$\surf(g'',t'',t''_0, t''_1)\le\gen(g'',t'',t''_0, t''_1)+32$ and
$\surf(g',t',t'_0, t'_1)\le\gen(g',t',t'_0, t'_1)+32$.
It follows that $\surf(g,t,t_0, t_1)-\surf(g',t',t'_0, t'_1)-\surf(g'',t'',t''_0, t''_1)\ge
\gen(g,t,t_0, t_1)-\gen(g',t',t'_0, t'_1)-\gen(g'',t'',t''_0, t''_1)-64=120-64=56$.
\item[{\rm (d)}] We have
$\surf(g, t, t_0, t_1)-\surf(g-2, t, t_0, t_1)\ge \gen(g, t, t_0, t_1)-(\gen(g-2, t, t_0, t_1)+116)=124$.
\end{itemize}
\end{proof}

Consider a graph $H$ embedded in a surface $\Pi$ with rings $\QQ$, and let $f$ be a face of $H$.
Let us recall that $\Pi_f$ is the surface whose interior is homeomorphic to $f$, as defined in Section~\ref{sec-summary}.
Let $a_0$ and $a_1$ be the number of weak and non-weak vertex-like rings, respectively, that form one of the facial walks of $f$ by
themselves.  Let $a$ be the number of facial walks of $f$.  We define $\surf(f)=\surf(g(\Pi_f),a,a_0,a_1)$.

Let $G_1$ be a graph embedded in $\Sigma_1$ with rings $\RR_1$ and $G_2$ a graph embedded in $\Sigma_2$ with rings $\RR_2$.
Let $m(G_i)$ denote the number of edges of $G_i$ that are not contained in the boundary of $\Sigma_i$.
Let us write $(G_1,\Sigma_1,\RR_1)\prec (G_2,\Sigma_2,\RR_2)$ to denote that 
the quadruple $(g(\Sigma_1),|\RR_1|,m(G_1),|E(G_1)|)$ is lexicographically smaller than $(g(\Sigma_2),|\RR_2|,m(G_2),|E(G_2)|)$.

If $\RR$ is the set of rings of a graph embedded in a surface, let $t_0(\RR)$ and $t_1(\RR)$ be the number of weak and non-weak vertex-like rings in $\RR$, respectively.
Let $\tilde{\ell}(\RR)$ denote the total number of vertices of the rings in $\RR$; we have $\tilde{\ell}(\RR)=\ell(\RR)+3t_0(\RR)+2t_1(\RR)$.
In order to prove Theorem~\ref{thm:mainsurf}, we show the following more general claim.
\begin{theorem}\mylabel{thm:maingen}
There exists a constant $\eta$ with the following property.  Let $G$ be a graph embedded in
a surface $\Sigma$ with rings $\RR$. If $G$ is $\RR$-critical and has internal girth at least five, then
$w(G,{\RR})\le \tilde{\ell}({\RR}) + \eta\cdot\surf(g(\Sigma),|\RR|, t_0(\RR), t_1(\RR))$.
\end{theorem}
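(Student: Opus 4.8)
The plan is to argue by induction using the order $\prec$ on triples $(G,\Sigma,\RR)$, so I fix a counterexample minimal with respect to $\prec$ and derive a contradiction. First I would take care of all the degenerate and reducible situations, reducing to the ``generic'' case where Theorem~\ref{thm:summary} applies. Specifically: if $G$ is disconnected, or has a cut vertex, or an internal $2$-cut, or a non-$2$-cell face that is not omnipresent, or two rings at distance at most three, or a short non-contractible cycle (length at most six), or a contractible $(\le\!9)$-cycle bounding a non-trivial disk, then I cut $G$ along an appropriate curve or separation, apply Lemmas~\ref{lem:surfcritical}, \ref{lemma-crcon}, \ref{lem:diskcritical} to see that the pieces are again critical with controlled rings, apply the induction hypothesis (the pieces are $\prec$-smaller since either the genus drops, or a handle/crosscap is replaced by cuffs, or the number of rings drops, or at worst only $|E|$ drops), and glue the resulting weight bounds using the additivity/monotonicity inequalities of Lemma~\ref{lemma-surfineq} together with the disk bound Theorem~\ref{thm:diskgirth5} and the cylinder bound Lemma~\ref{lem:cyl47}. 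The constants $32$, $56$, $16$, $124$, $120$ appearing in Lemma~\ref{lemma-surfineq} are exactly what is needed to absorb the ``interface'' weight paid by the cut; this is bookkeeping but must be done carefully. After all this, I may assume $G$ is well-behaved, satisfies (I0)--(I9), every internal face is closed $2$-cell or omnipresent, and ($g>0$ or $|\RR|>1$).

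Next I would set up the weight inequality that drives the induction. I would take $M$ to be a subgraph capturing $(\le\!4)$-cycles and supported near the rings and on the short non-contractible cycles, bounded in size by $O(g+\ell(\RR))$ via Lemma~\ref{lem:concentric} and Lemmas~\ref{lem:natr}, \ref{lem:near6}, \ref{lem:near7} (after the reductions, short non-contractible cycles are confined to a bounded-weight neighborhood of the rings). Then either $w(G,\RR)$ is already below the target bound $\ell(\RR)+\eta\,\surf(g,|\RR|,t_0,t_1)$ (done, choosing $\eta$ large enough that the hypothesis of Theorem~\ref{thm:summary}, which is linear in $g$, $|\RR|$, $\ell(\RR)$, $|E(M)|$, is implied), or Theorem~\ref{thm:summary} produces an $\RR$-critical $G'$ with $|E(G')|<|E(G)|$, a set $Y$ of size $\le2$ with $G'-Y$ of girth five, a cover of $G$ by faces of $G'$ with $\sum_{f}\el(f)\le10$, and the weight transfer $\sum_i w(G^f_i,\{R^f_i\})\le s(|f|)-c(f)$ for closed $2$-cell faces $f$, with $\sum_f c(f)\ge|X|s(6)$.

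Now I would bound $w(G,\RR)$ in terms of data of $G'$. Writing $w(G,\RR)=\sum_{f\in X}w(f)+\sum_{f\in F(G')}\sum_i w(G^f_i,\{R^f_i\})$, I bound the closed-$2$-cell terms by $s(|f|)-c(f)$ and the omnipresent terms by combining Theorem~\ref{thm:diskgirth5} with the definition of the contribution $c(f)$ of an omnipresent face; the exceptional cases (E0)--(E5) are precisely where $c(f)=-\infty$ or the $\pm5s(5)$ corrections appear, matching the four cases of Theorem~\ref{thm:diskgirth5}. Summing, $w(G,\RR)\le w(G',\RR)+\sum_f\el(f)\cdot(\text{something})+|X|\cdot(\ldots)-\sum_f c(f)\le w(G',\RR)+O(1)$, where the $O(1)$ is absorbed because $G'$ has at least one face that is not closed $2$-cell or has length $\ge6$ (condition (c)), which gives slack in the weight. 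Since $|E(G')|<|E(G)|$ but $(g,|\RR|)$ is unchanged, I cannot apply the induction hypothesis to $G'$ directly unless $G'$ still has internal girth five; here I use $Y$: I either absorb the at-most-two bad vertices into the rings (they lie on short non-contractible cycles by (b)), reducing to a $\prec$-smaller instance, or I re-run the whole argument on $G'$. The cleanest route is to fold the ``(b) forces bad cycles to be non-contractible and meet the rings'' information into an auxiliary induction on $|E(G)|$ at fixed $(g,|\RR|)$, as is done for the cylinder in Lemma~\ref{lem:cyl47}. The main obstacle, and the place where the real work lies, is exactly this last step: controlling the girth-five hypothesis for $G'$ and showing that the $O(1)$ losses from elasticity, from the face set $X$, and from the $5s(5)$ corrections never exceed the slack guaranteed by condition (c) of Theorem~\ref{thm:summary} — i.e. pinning down $\eta$ and verifying the master inequality $\eta\,\surf(g,|\RR|,t_0,t_1)\ge(\text{RHS of Theorem~\ref{thm:summary}'s hypothesis})$ simultaneously with all the gluing inequalities of Lemma~\ref{lemma-surfineq}.
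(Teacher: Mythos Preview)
Your overall architecture matches the paper's: induction on $\prec$, reductions to the generic case, build $M$, invoke Theorem~\ref{thm:summary}, and compare $w(G,\RR)$ with $w(G',\RR)$. But you have misidentified the ``main obstacle,'' and your proposed fix for it would not work.

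The issue you flag --- that $G'$ may fail internal girth five, forcing you to absorb the vertices of $Y$ from clause~(a) into the rings or to ``re-run the argument on $G'$'' --- is not how the paper proceeds, and your route would be circular (re-running on $G'$ needs the very theorem you are proving, with no usable decrease unless $G'$ has internal girth five) or would change the ring structure in a way that breaks the $\surf$ bookkeeping. The paper's mechanism is simpler and is the point you are missing: in the reduction phase one cuts along every non-contractible $(\le\!7)$-cycle that does \emph{not} surround a ring (note: seven, not six), so that afterwards every non-contractible $(\le\!7)$-cycle surrounds some ring. One then takes $M$ to contain \emph{all} non-contractible $(\le\!7)$-cycles. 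Clause~(b) of Theorem~\ref{thm:summary} says any $(\le\!4)$-cycle $C'$ of $G'$ yields a non-contractible $(\le\!|C'|+3)$-cycle $C$ of $G$ with $C\not\subseteq M$; since $|C'|+3\le7$ this contradicts the choice of $M$. Hence $G'$ automatically has internal girth at least five, and the induction hypothesis applies to $G'$ directly. Clause~(a) and the set $Y$ are not used here at all.

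Two related corrections. First, the bound on $|E(M)|$ must be $O(|\RR|)$, not $O(g+\ell(\RR))$: after the reductions every short non-contractible cycle surrounds a ring, and Lemma~\ref{lem:concentric} together with the cylinder bound (Lemma~\ref{lem:cyl47}) gives at most $140+5\,\cyl(7,7)/s(5)$ edges of $M$ per ring. A bound depending on $\ell(\RR)$ would be fatal, since the right-hand side of the theorem carries $\ell(\RR)$ with coefficient $1$, not $\eta$. (Lemmas~\ref{lem:natr}, \ref{lem:near6}, \ref{lem:near7} belong to the narrow-cylinder analysis and are not invoked here.) Second, the paper packages the ``bad face'' reductions --- non-open-$2$-cell and non-omnipresent, omnipresent, and open-$2$-cell but not closed-$2$-cell --- as three auxiliary claims proved for \emph{any} $H$ with $(H,\Pi,\QQ)\preceq(G,\Sigma,\RR)$, so that they can be applied first to $G$ (to secure (I3)) and then again to $G'$ (to dispose of the cases where $G'$ has such faces). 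Your plan treats these only as reductions on $G$, which leaves you without a tool when $G'$ itself has a non-closed-$2$-cell face.
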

\begin{proof}
Let $\eta=1867+67\cyl(7,7)/s(5)$.
We proceed by induction and assume that the claim holds for all graphs $G'$ embedded in surfaces $\Sigma'$ with rings $\RR'$
such that $(G',\Sigma',\RR')\prec (G,\Sigma,\RR)$.
Let $g=g(\Sigma)$, $t_0=t_0(\RR)$ and $t_1=t_1(\RR)$.
By Theorem~\ref{thm:diskgirth5}, the claim holds if $g=0$ and $|\RR|=1$, hence assume that $g>0$ or $|\RR|>1$.
Similarly, if $g=0$ and $|\RR|=2$, then we can assume that $t_0+t_1\le 1$ by Lemma~\ref{lemma-critshort}.
By Lemmas~\ref{lem:i012}, \ref{lem:crit3conn}, and \ref{lem:diskcritical} and Theorem~\ref{thm-planechar}, $G$ satisfies (I0), (I1), (I2), (I6) and (I9).

Suppose now that there exists a path $P$ of length at most six with ends in distinct rings $R_1,R_2\in {\RR}$.
By choosing the shortest such path, we can assume that $P$ intersects no other rings.
Let $J=P\cup \bigcup_{R\in {\RR}} R$ and let $S=\{f\}$, where $f$ is the face of $J$ incident with edges of $P$.
Let $\{G'\}$ be the $G$-expansion of $S$, let $\Sigma'$ be the surface in that $G'$ is embedded and let ${\RR}'$ be the natural rings of $G'$.
Note that $g(\Sigma')=g$, $|\RR'|=|\RR|-1$, $\tilde{\ell}(\RR')\le \tilde{\ell}(\RR)+12$ and $t_0(\RR')+t_1(\RR')\ge t_0+t_1-2$.
Since $(G',\Sigma',\RR')\prec (G,\Sigma,\RR)$, by induction and by Lemma~\ref{lemma-surfineq}(a) we have
$w(G,{\RR})=w(G',{\RR'})\le \eta\cdot\surf(g,|\RR|-1,t_0(\RR'),t_1(\RR'))+\tilde{\ell}(\RR)+12<\eta\cdot\surf(g,|\RR|,t_0,t_1)+\tilde{\ell}(\RR)$.
Therefore, we can assume that no such path exists.
\claim{cl-i7str}{The distance between every two distinct members of $\RR$ is at least seven.}
In particular, (I7) holds.

Next, we aim to prove property (I3).  For later use, we will consider a more general setting.

\claim{cl-non2nonomni}{Let $H$ be a graph embedded in $\Pi$ with rings $\QQ$ such that at least one face
of $H$ is not open $2$-cell and no face of $H$ is omnipresent.  If $H$ is $\QQ$-critical, has internal girth at least five and $(H,\Pi,\QQ)\preceq (G,\Sigma,\RR)$, then
$$w(H,\QQ)\le \tilde{\ell}(\QQ) + \eta\cdot\Bigl(\surf(g(\Pi), |\QQ|, t_0(\QQ),t_1(\QQ)) - 7 - \sum_{h\in F(H)} \surf(h)\Bigr).$$}
\begin{subproof}
We prove the claim by induction.
Consider for a moment a graph $H'$ of internal girth at least $5$ embedded in a surface $\Pi'$ with rings $\QQ'$
with $(H',\Pi',\QQ')\prec(H,\Pi,\QQ)$, such that either $H'=\QQ'$ or $H'$ is $\QQ'$-critical.  We claim that
\begin{equation}\label{eq-ind}
w(H',\QQ')\le \tilde{\ell}(\QQ') + \eta\cdot\Bigl(\surf(g(\Pi'), |\QQ'|, t_0(\QQ'),t_1(\QQ')) - \sum_{h\in F(H')} \surf(h)\Bigr).
\end{equation}
The claim obviously holds if $H'=\QQ'$, since $w(H',\QQ')\le \tilde{\ell}(\QQ')$ in that case; hence, it suffices to consider the case that
$H'$ is $\QQ'$-critical.
If at least one face of $H'$ is not open $2$-cell and no face of $H'$ is omnipresent, then
this follows by an inductive application of (\ref{cl-non2nonomni}) (we could even strengthen the inequality by $7\eta$).
If all faces of $H'$ are open $2$-cell, then note that $\surf(h)=0$ for every $h\in F(H')$,
and since $(H',\Pi',\QQ')\prec(G,\Sigma,\RR)$, we can apply Theorem~\ref{thm:maingen} inductively to obtain (\ref{eq-ind}).
Finally, suppose that $H'$ has an omnipresent face $f$, let $\QQ'=\{Q_1,\ldots, Q_t\}$ and for $1\le i\le t$,
let $C_i$ be the cuff traced by $Q_i$, let $\Delta_i$ be a closed disk in $\Pi'+\widehat{C_i}$ such that $\widehat{C_i}\subset \Delta_i$ and the boundary of $\Delta_i$
is a subset of $f$, and let $f_i$ denote the boundary walk of $f$ contained in $\Delta_i$.
Since all components of $H'$ are planar and contain only one ring, Lemma~\ref{lem:crit3conn} implies that all faces of $H'$ distinct from $f$ are closed $2$-cell.
Furthermore, each vertex-like ring forms a component of the boundary of $f$ by itself, hence
$\surf(f)=\surf(g(\Pi'),|\QQ'|,t_0(\QQ'),t_1(\QQ'))$.
If $Q_i$ is not a vertex-like ring, then by applying Theorem~\ref{thm:diskgirth5} to the subgraph $H'_i$ of $H'$ embedded in $\widehat{\Delta_i}\setminus \widehat{C_i}$, we conclude that the weight of $H'_i$ is at most
$s(|Q_i|)$ and that $|f_i|\le |Q_i|$.  Note that $s(|Q_i|)-s(|f_i|)\le |Q_i|-|f_i|$.  Therefore, we again obtain (\ref{eq-ind}):
\begin{align*}
w(H',\QQ')&\le |f|+\sum_{i=1}^t s(|Q_i|)-s(|f_i|)\le |f|+\sum_{i=1}^t |Q_i|-|f_i|\\
&=\tilde{\ell}(\QQ')\\
&=\tilde{\ell}(\QQ') + \eta\cdot\Bigl(\surf(g(\Pi'), |\QQ'|, t_0(\QQ'),t_1(\QQ')) - \sum_{h\in F(H')} \surf(h)\Bigr).
\end{align*}

Let us now return to the graph $H$.  Since $H$ is $\QQ$-critical, Theorem~\ref{grotzsch} implies that no component of $H$ is a planar graph without rings.
Let $f$ be a face of $H$ which is not open $2$-cell.  Since $H$ has such a face and $f$ is not omnipresent,
we have $g(\Pi)>0$ or $|\QQ|>2$.  Let $c$ be a simple closed curve in $f$
infinitesimally close to a facial walk $W$ of $f$.
Cut $\Pi$ along $c$ and cap the resulting holes by disks ($c$ is always a $2$-sided curve).
Let $\Pi_1$ be the surface obtained this way that contains $W$, and if $c$ is separating, then let $\Pi_2$ be the other surface.
Since $f$ is not omnipresent, we can choose $W$ so that either $g(\Pi_1)>0$ or
$\Pi_1$ contains at least two rings of $\QQ$.  Let us discuss several cases:

\begin{itemize}
\item {\em The curve $c$ is separating and $H$ is contained in $\Pi_1$.}  In this case $f$ has only one facial walk, and since $f$ is not open $2$-cell,
$\Pi_2$ is not the sphere.  It follows that $g(\Pi_1)=g(\Pi)-g(\Pi_2)<g(\Pi)$, and thus $(H,\Pi_1,\QQ)\prec (H,\Pi,\QQ)$.
Note that the weights of the faces of the embedding of $H$ in $\Pi$ and in $\Pi_1$ are the same, with the exception of $f$
whose weight in $\Pi$ is $|f|$, while the corresponding face in $\Pi_1$ has weight $s(|f|)\ge |f|-8$.
By (\ref{eq-ind}), we have
$$w(H,\QQ)\le \tilde{\ell}(\QQ) +8 + \eta\cdot\Bigl(\surf(g(\Pi_1), |\QQ|, t_0(\QQ),t_1(\QQ)) + \surf(f) - \sum_{h\in F(H)} \surf(h)\Bigr).$$
Note that $\surf(f)=120g(\Pi_2)-72$.
By Lemma~\ref{lemma-surfineq}(b), we conclude that
$$w(H,\QQ)\le\tilde{\ell}(\QQ) +8 + \eta\cdot\Bigl(\surf(g(\Pi), |\QQ|, t_0(\QQ),t_1(\QQ)) -40 - \sum_{h\in F(H)} \surf(h)\Bigr).$$

\item {\em The curve $c$ is separating and $\Pi_2$ contains a nonempty part $H_2$ of $H$.}  Let $H_1$ be the part of $H$ contained
in $\Pi_1$. Let $\QQ_i$ be the subset of $\QQ$ belonging to $\Pi_i$ and $f_i$ the face of $H_i$ corresponding to $f$, for $i\in \{1,2\}$.
Note that $f_1$ is an open disk, hence $\surf(f_1)=0$.  Using (\ref{eq-ind}), we get
\begin{align*}
w(H,\QQ)\le &w(f)-w(f_1)-w(f_2)+\tilde{\ell}(\QQ_1)+\tilde{\ell}(\QQ_2)\\
&+\eta\cdot\sum_{i=1}^2\surf(g(\Pi_i), |\QQ_i|, t_0(\QQ_i),t_1(\QQ_i))\\
&+\eta\cdot\Bigl(\surf(f)-\surf(f_2) - \sum_{h\in F(H)} \surf(h)\Bigr).
\end{align*}
Note that $w(f)-w(f_1)-w(f_2)\le 16$ and
$\tilde{\ell}(\QQ_1)+\tilde{\ell}(\QQ_2)=\tilde{\ell}(\QQ)$.  Also, $\surf(f)-\surf(f_2)\le 48$,
and when $g(\Pi_f)=0$ and $f$ has only two facial walks, then $\surf(f)-\surf(f_2)\le 6$.

By Lemma~\ref{lemma-surfineq}(c), we have 
$$\sum_{i=1}^2\surf(g(\Pi_i), |\QQ_i|, t_0(\QQ_i),t_1(\QQ_i)) \le\surf(g(\Pi), |\QQ|, t_0(\QQ),t_1(\QQ)) - \delta,$$
where $\delta=16$ if $g(\Pi_2)=0$ and $|\QQ_2|=1$ and $\delta=56$ otherwise.
Note that if $g(\Pi_2)=0$ and $|\QQ_2|=1$, then $g(\Pi_f)=0$ and $f$ has only two facial walks.
We conclude that $\surf(f)-\surf(f_2)-\delta \le -8$.  Therefore,
$$w(H,\QQ)\le \tilde{\ell}(\QQ)+16+\eta\cdot\Bigl(\surf(g(\Pi), |\QQ|, t_0(\QQ),t_1(\QQ))-8-\sum_{h\in F(H)} \surf(h)\Bigr).$$

\item {\em The curve $c$ is not separating.} Let $f_1$ be the face of $H$ (in the embedding in $\Pi_1$) bounded by $W$ and $f_2$ the other face corresponding to
$f$.  Again, note that $\surf(f_1)=0$.  By (\ref{eq-ind}) applied to $H$ embedded in $\Pi_1$, we obtain the following for the weight of $H$ in $\Pi$:
\begin{align*}
w(H,\QQ)\le &w(f)-w(f_1)-w(f_2) + \tilde{\ell}(\QQ)\\
&+ \eta\cdot\surf(g(\Pi_1), |\QQ|, t_0(\QQ),t_1(\QQ))\\
&+ \eta\cdot\Bigl(\surf(f) - \surf(f_2) - \sum_{h\in F(H)} \surf(h)\Bigr).
\end{align*}
Since $c$ is two-sided, $g(\Pi_1)=g(\Pi)-2$, and
$$\surf(g(\Pi_1), |\QQ|, t_0(\QQ),t_1(\QQ))=\surf(g(\Pi), |\QQ|, t_0(\QQ),t_1(\QQ))-124$$
by Lemma~\ref{lemma-surfineq}(d).
Since $\surf(f) - \surf(f_2)\le 48$ and $w(f)-w(f_1)-w(f_2)\le 16$, we have
$$w(H,\QQ)\le \tilde{\ell}(\QQ)+16+\eta\cdot\Bigl(\surf(g(\Pi), |\QQ|, t_0(\QQ),t_1(\QQ))-76-\sum_{h\in F(H)} \surf(h)\Bigr).$$
\end{itemize}
The results of all the subcases imply (\ref{cl-non2nonomni}).
\end{subproof}

\claim{cl-omni}{Let $H$ be a graph embedded in $\Sigma$ with rings $\RR$ and let $f$ be an omnipresent face of $H$.
If $H$ is $\RR$-critical, has internal girth at least five, and at least one component of $H$ is not very exceptional, then
$$w(H,\RR)\le \tilde{\ell}(\RR)-\kappa=\tilde{\ell}(\RR) - \kappa +  \eta\cdot\Bigl(\surf(g, |\RR|, t_0,t_1) - \sum_{h\in F(H)} \surf(h)\Bigr),$$
where $\kappa=5-5s(5)$ if $H$ has exactly one component not equal to a ring and this component is exceptional,
$\kappa=5+5s(5)$ if $H$ has exactly one component not equal to a ring and this component is not exceptional,
and $\kappa=6$ otherwise.}
\begin{subproof}
Since $H$ is $\RR$-critical and $f$ is an omnipresent face, each component of $H$ is planar and contains
exactly one ring.  In particular, all faces of $H$ distinct from $f$ are closed $2$-cell.
For $R\in\RR$, let $H_R$ be the component of $H$ containing $R$.
Exactly one boundary walk $W$ of $f$ belongs to $H_R$.  Cutting along $W$ and capping the hole by a disk,
we obtain an embedding of $H_R$ in a disk with one ring $R$. Let $f_R$ be the face of this embedding bounded by $W$.
Note that either $H_R=R$ or $H_R$ is $\{R\}$-critical. If $R$ is a vertex-like ring, then by Theorem~\ref{thm:diskgirth5} we have $H_R=R$; hence,
every vertex-like ring in $\RR$ forms a facial walk of $f$, and $\surf(f)=\surf(g,|\RR|,t_0,t_1)$.
Consequently, $\surf(g, |\RR|, t_0,t_1)=\sum_{h\in F(H)} \surf(h)$, and it suffices to prove the first inequality of the claim.

Suppose that $H_R\neq R$ for a ring $R\in \RR$. Theorem~\ref{thm:diskgirth5} implies
$w(H_R,\{R\})\le s(|R|-\rho_R)+\alpha_R$, where 
$$(\rho_R,\alpha_R)=\begin{cases}
(3,s(5))&\text{ if $H_R$ is very exceptional}\\
(5,5s(5))&\text{ if $H_R$ satisfies (E4) or (E5)}\\
(5,-5s(5))&\text{ if $H_R$ is not exceptional.}
\end{cases}$$
Since $f_R$ is a face of $H_R$ and $s(y)-s(x)>5s(5)$ for every $y>x\ge 5$, we have $|f_R|\le |R|-\rho_R$.  Furthermore,
$w(H_R,\{R\})-w(f_R)\le s(|R|-\rho_R)+\alpha-s(|f_R|)\le |R|-|f_R|-\rho_R+\alpha_R$.
Since at least one component of $H$ is not very exceptional, summing over all the rings we obtain
\begin{align*}
w(H,\RR)&= w(f)+\sum_{R\in \RR} (w(H_R,\{R\})-w(f_R))\\
&\le |f|+\sum_{R\in\RR} (|R|-|f_R|)-\kappa\\
&= \tilde{\ell}(\RR)-\kappa.
\end{align*}
\end{subproof}

\claim{cl-rep1}{Let $H$ be an $\RR$-critical graph embedded in $\Sigma$ with rings $\RR$
so that all faces of $H$ are open $2$-cell.
If $H$ is $\RR$-critical, has internal girth at least five, $|E(H)|\le |E(G)|$ and a face $f$ of $H$ is not semi-closed $2$-cell,
then $$w(H,\RR)\le \tilde{\ell}(\RR) + \eta\cdot\Bigl(\surf(g,|\RR|, t_0, t_1)-1/2\Bigr).$$}
\begin{subproof}
Since $f$ is not semi-closed $2$-cell, there exists a vertex $v$ appearing at least twice in the facial walk of $f$ that is not the main vertex
of a vertex-like ring forming part of the boundary of $f$.
There exists a simple closed curve $c$ going through the interior of $f$ and joining two of the appearances of $v$.
Cut the surface along $c$ and patch the resulting hole(s) by disk(s).  Let $v_1$ and $v_2$ be the two vertices to that $v$ is split.
For $i=1,2$, if $v_i$ is not incident with a cuff, drill a new hole next to it in the incident patch and add a triangle $T_i$ tracing its boundary,
with vertex set consisting of $v_i$ and two new vertices.

If $c$ is separating, then let $H_1$ and $H_2$ be the resulting graphs embedded in the two
surfaces $\Sigma_1$ and $\Sigma_2$ obtained by this construction; if $c$ is not separating, then let $H_1$
be the resulting graph embedded in a surface $\Sigma_1$.  We choose the labels so that $v_1\in V(H_1)$.
If $c$ is two-sided, then let $f_1$ and $f_2$ be the faces to that $f$ is split by $c$, where $f_1$ is a face of $H_1$.
If $c$ is one-sided, then let $f_1$ be the face in $\Sigma_1$ corresponding to $f$.
Note that $|f_1|+|f_2|\le |f|+6$ in the former case, and thus
$w(f)-w(f_1)-w(f_2)\le 10$.  Similarly, in the latter case we have $w(f)\le w(f_1)$.

If $c$ is separating, then for $i\in\{1,2\}$, let $\RR_i$ consist the rings of $\RR$ contained in $\Sigma_i$, and
if none of these rings contains $v_i$ (so that $T_i$ exists), then also of the vertex-like ring $T_i$.
Here, we designate $T_i$ as weak if $v$ is an internal vertex, $\Sigma_{3-i}$ is a cylinder and the ring of $H_{3-i}$ distinct from $T_{3-i}$ is a vertex-like ring.
If $c$ is not separating, then let $\RR_1$ consist of the rings of $\RR$, together with those of $T_1$ and $T_2$ that exist.
In this case, we treat $T_1$ and $T_2$ as non-weak vertex-like rings.

Suppose first that $c$ is not separating.  Note that $H_1$ has one or two more rings (of length $1$) than $H$ and
$g(\Sigma_1)\in \{g-1,g-2\}$ (depending on whether $c$ is one-sided or not), and that $H_1$ has at least two rings.
If $H_1$ has only one more ring than $H$, then
\begin{align*}
\surf(g(\Sigma_1),|\RR_1|,t_0(\RR_1),t_1(\RR_1))&\le \surf(g-1,|\RR|+1,t_0,t_1+1)\\
&\le\gen(g-1,|\RR|+1,t_0,t_1+1)+32\\
&=\gen(g,|\RR|,t_0,t_1)-44\\
&=\surf(g,|\RR|,t_0,t_1)-44.
\end{align*}
Let us now consider the case that $H_1$ has two more rings than $H$ (i.e., that $v$ is an internal vertex).  If $g(\Sigma_1)=0$ and $|\RR_1|=2$,
then note that both rings of $H_1$ are vertex-like rings.  Lemma~\ref{lemma-critshort} implies that
$H_1$ has only one edge; but the corresponding edge in $H$ would form a loop, which is a contradiction.
Consequently, we have $g(\Sigma_1)\ge 1$ or $|\RR_1|\ge 3$, and
\begin{align*}
\surf(g(\Sigma_1),|\RR_1|,t_0(\RR_1),t_1(\RR_1))&\le \surf(g-1,|\RR|+2,t_0,t_1+2)\\
&=\surf(g,|\RR|,t_0,t_1)-32.
\end{align*}
We apply Theorem~\ref{thm:maingen} inductively to $H_1$, concluding that
$w(H,\RR)\le \tilde{\ell}(\RR)+12+\eta\cdot\Bigl(\surf(g,|\RR|,t_0,t_1) - 32\Bigr)$, and the claim follows.

Next, we consider the case that $c$ is separating.  Let us remark that $H_i$ is $\RR_i$-critical for $i\in\{1,2\}$.
This follows from Lemma~\ref{lemma-crcon}, unless $T_i$ is a weak vertex-like ring.  However, in that case Lemma~\ref{lemma-critshort}
implies that $H_{3-i}$ contains only one edge not belonging to the rings, and the $\RR_i$-criticality of $H_i$ is argued in the same way
as in the proof of Theorem~\ref{thm:4cycles}.
Thus, we can apply Theorem~\ref{thm:maingen} inductively to $H_1$ and $H_2$, and we have
\begin{align*}
w(H,\RR)&=w(H_1,\RR_1)+w(H_2,\RR_2) + w(f)-w(f_1)-w(f_2)\\
&\le \tilde{\ell}(\RR)+12 + \eta\cdot\sum_{i=1}^2\surf(g(\Sigma_i),|\RR_i|,t_0(\RR_i),t_1(\RR_i))
\end{align*}
Therefore, it suffices to prove that
\begin{equation}\label{eq-loccut}
\sum_{i=1}^2\surf(g(\Sigma_i),|\RR_i|,t_0(\RR_i),t_1(\RR_i))\le \surf(g,|\RR|,t_0,t_1)-1.
\end{equation}

If $g(\Sigma_1)=0$ and $|\RR_1|=1$, then (since $v$ is not the main vertex of a vertex-like ring),
we have $t_0(\RR_1)=t_1(\RR_1)=0$ and $\surf(g(\Sigma_1),|\RR_1|,t_0(\RR_1),t_1(\RR_1))=0$; and furthermore,
$g(\Sigma_2)=g$, $|\RR_2|=|\RR|$, $t_0(\RR_2)=t_0$, and $t_1(\RR_2)=t_1+1$.
Consequently,
\begin{align*}
\sum_{i=1}^2\surf(g(\Sigma_i),|\RR_i|,t_0(\RR_i),t_1(\RR_i))&=\surf(g, |\RR|, t_0, t_1+1)\\
&\le  \surf(g,|\RR|,t_0,t_1)-4,
\end{align*}
which implies (\ref{eq-loccut}).  Hence, we can assume that if $g(\Sigma_1)=0$, then $|\RR_1|\ge 2$,
and symmetrically, if $g(\Sigma_2)=0$, then $|\RR_2|\ge 2$.

If $|\RR_1|+|\RR_2|=|\RR|+1$ (and thus $t_1(\RR_1)+t_1(\RR_2)=t_1+1$), we have
\begin{align*}
\sum_{i=1}^2\surf(g(\Sigma_i),|\RR_i|,t_0(\RR_i),t_1(\RR_i))&\le \sum_{i=1}^2(\gen(g(\Sigma_i),|\RR_i|,t_0(\RR_i),t_1(\RR_i))+32)\\
&=\gen(g,|\RR|,t_0,t_1)-12\\
&=\surf(g,|\RR|,t_0,t_1)-12.
\end{align*}
This implies (\ref{eq-loccut}). Therefore, we can assume that $|\RR_1|+|\RR_2|=|\RR|+2$, i.e., $v$ is an internal vertex.
Suppose that for both $i\in\{1,2\}$, we have $g(\Sigma_i)>0$ or $|\RR_i|>2$.  Then,
\begin{align*}
\sum_{i=1}^2\surf(g(\Sigma_i),|\RR_i|,t_0(\RR_i),t_1(\RR_i))&= \sum_{i=1}^2\gen(g(\Sigma_i),|\RR_i|,t_0(\RR_i),t_1(\RR_i))\\
&=\surf(g,|\RR|,t_0,t_1)-32.
\end{align*}
and (\ref{eq-loccut}) follows.

Hence, we can assume that say $g(\Sigma_1)=0$ and $|\RR_1|=2$.  Then, $\RR_1=\{T_1,R_1\}$ for some ring $R_1$,
$g(\Sigma_2)=g$ and $|\RR_2|=|\RR|$.  Since $H_1$ is $\RR_1$-critical, Corollary~\ref{cor-critshort}
implies that $R_1$ is not a weak vertex-like ring.  If $R_1$ is a vertex-like ring, then $T_2$ is a weak
vertex-like ring of $\RR_2$ which replaces the non-weak vertex-like ring $R_1$.
Therefore,
$\surf(g(\RR_2),|\RR_2|,t_0(\RR_2),t_1(\RR_2))=\surf(g,|\RR|,t_0,t_1)-1$.
Furthermore, $\surf(g(\RR_1),|\RR_1|,t_0(\RR_1),t_1(\RR_1))=\surf(0,2,0,2)=0$,
and (\ref{eq-loccut}) follows.

Finally, consider the case that $|R_1|\ge 3$.  By symmetry, we can assume that
if $g(\Sigma_2)=0$ and $|\RR_2|=2$, then also $\RR_2$ contains a non-vertex-like ring.
Since $\RR_2$ is obtained from $\RR$ by replacing $R_1$ by a non-weak vertex-like ring $T_2$,
we have $\surf(g(\RR_2),|\RR_2|,t_0(\RR_2),t_1(\RR_2))=\surf(g,|\RR|,t_0,t_1)-4$.
Furthermore, $\surf(g(\RR_1),|\RR_1|,t_0(\RR_1),t_1(\RR_1))=\surf(0,2,0,1)=2$.
Consequently,
$$\sum_{i=1}^2\surf(g(\Sigma_i),|\RR_i|,t_0(\RR_i),t_1(\RR_i))\le \surf(g,|\RR|,t_0,t_1)-2.$$

Therefore, inequality (\ref{eq-loccut}) holds.
\end{subproof}

By (\ref{cl-non2nonomni}), (\ref{cl-omni}) and (\ref{cl-rep1}), we can assume that $G$ satisfies (I3).
Next, we consider short paths joining ring vertices.

Suppose that $G$ contains a path $P$ of length at most $11$ joining two distinct vertices $u$ and $v$ of a ring $R\in\RR$,
such that $V(P)\cap V(R)=\{u,v\}$ and $R\cup P$ contains no contractible cycle.
Since the distance between any two rings in $G$ is at least seven by (\ref{cl-i7str}), all vertices of $V(P)\setminus \{u,v\}$ are internal.
Let $J$ be the subgraph of $G$ consisting of $P$ and of the union of the rings, and let $S$ be the set of faces of $J$.
Let $\{G_1,\ldots, G_k\}$ be the $G$-expansion of $S$,
and for $1\le i\le k$, let $\Sigma_i$ be the surface in that $G_i$ is embedded and let $\RR_i$ be the natural rings of $G_i$.  Note that $\sum_{i=1}^k t_0(\RR_i)=t_0$
and $\sum_{i=1}^k t_1(\RR_i)=t_1$.  Let $r=\left(\sum_{i=1}^k |\RR_i|\right)-|\RR|$ and observe that
either $r=0$ and $k=1$, or $r=1$ and $1\le k\le 2$ (depending on whether the curve in $\widehat{\Sigma}$ corresponding to a cycle in $R\cup P$
distinct from $R$ is one-sided, two-sided and non-separating or two-sided and separating). Furthermore,
$\sum_{i=1}^k g(\Sigma_i)=g+2k-r-3$.

We claim that $(G_i,\Sigma_i,\RR_i)\prec(G,\Sigma,\RR)$ for $1\le i\le k$.  This is clearly the case, unless $g(\Sigma_i)=g$.
Then, we have $k=2$, $r=1$ and $g(\Sigma_{3-i})=0$.  Since $R\cup P$ contains no contractible cycle, $\Sigma_{3-i}$ is not a disk,
hence $|\RR_{3-i}|\ge 2$ and $|\RR_i|<|\RR|$, again implying $(G_i,\Sigma_i,\RR_i)\prec(G,\Sigma,\RR)$.

By induction, we have
$w(G_i,\RR_i)\le \tilde{\ell}(\RR_i)+\eta\cdot\surf(g(\Sigma_i),|\RR_i|,t_0(\RR_i),t_1(\RR_i))$, for $1\le i\le k$.
Since every face of $G$ is a face of $G_i$ for some $i\in\{1,\ldots, k\}$
and $\sum_{i=1}^k\tilde{\ell}(\RR_i)\le \tilde{\ell}(\RR)+22$, we conclude that
$$w(G,\RR)\le \tilde{\ell}(\RR)+22+\eta\cdot\sum_{i=1}^k\surf(g(\Sigma_i),|\RR_i|,t_0(\RR_i),t_1(\RR_i)).$$
Note that for $1\le i\le k$, we have that $\Sigma_i$ is not a disk and $\RR_i$ contains at least one non-vertex-like ring,
and thus $\surf(g(\Sigma_i),|\RR_i|,t_0(\RR_i),t_1(\RR_i))\le \gen(g(\Sigma_i),|\RR_i|,t_0(\RR_i),t_1(\RR_i))+30$.
Therefore,
\begin{align*}
\lefteqn{\sum_{i=1}^k\surf(g(\Sigma_i),|\RR_i|,t_0(\RR_i),t_1(\RR_i))}\\
&\le\sum_{i=1}^k(\gen(g(\Sigma_i),|\RR_i|,t_0(\RR_i),t_1(\RR_i))+30)\\
&\le\surf(g,|\RR|,t_0,t_1)+120(2k-r-3)+48r-120(k-1)+60\\
&=\surf(g,|\RR|,t_0,t_1)+120k-72r-180\\
&\le\surf(g,|\RR|,t_0,t_1)-12.
\end{align*}
The inequality of Theorem~\ref{thm:maingen} follows; therefore, we can assume that
\claim{cl-nostrp}{if $P$ is a path of length at most $11$ joining two distinct vertices of a ring $R$,
then $R\cup P$ contains a contractible cycle.}
Let us note that since $g>0$ or $|\RR|\ge 2$, this contractible cycle is unique.

Consider now a path $P$ of length at most four, such that its ends $u$ and $v$ are distinct ring vertices and all other vertices of $P$ are internal.
By (I7), both ends of $P$ belong to the same ring $R$; let $P$, $P_1$ and $P_2$ be the paths in $R\cup P$ joining $u$ and $v$.
By (\ref{cl-nostrp}), we can assume that $P\cup P_2$ is a contractible cycle.  Suppose that the disk bounded by $P\cup P_2$ neither
is a face nor consists of two $5$-faces.  By Theorem~\ref{thm-planechar}, we have $|P\cup P_2|\ge 9$.
Let $J$, $S$, $G_i$, $\Sigma_i$ and $\RR_i$ (for $i\in\{1,2\}$) be defined as in the proof of (\ref{cl-nostrp}),
where $\Sigma_2$ is a disk and $\RR_2$ consists of a single ring corresponding to $P\cup P_2$.
Since $g(\Sigma_1)=g$, $|\RR_1|=|\RR|$ and $|E(G_1)|<|E(G)|$, by induction
we have $w(G_1,\RR_1)\le \tilde{\ell}(\RR_1)+\eta\cdot\surf(g,|\RR|,t_0,t_1)$.  Note that $\tilde{\ell}(\RR_1)=\tilde{\ell}(\RR)+|P|-|P_2|$.
Furthermore, Theorem~\ref{thm:diskgirth5} implies $w(G_2,\RR_2)\le s(|P|+|P_2|)=|P|+|P_2|-8$.
Therefore, $w(G,\RR)\le \tilde{\ell}(\RR)+\eta\cdot\surf(g,|\RR|,t_0,t_1)+2|P|-8$.
Since $|P|\le 4$, the claim of Theorem~\ref{thm:maingen} follows.  Therefore, we can assume that the disk bounded by $P\cup P_1$ is
either a face or consists of two $5$-faces.  The same calculation also excludes the possibility that $|P|\le 2$,
since $s(|P|+|P_2|)\le |P|+|P_2|-4$ for any $P$ and $P_2$ such that $|P|+|P_2|\ge 5$.  In particular, we can assume that (I4) holds for $G$.

Suppose that $G$ contains two adjacent vertices $r_1$ and $r_2$ of degree two that do not belong to a vertex-like ring.  Let $R$ be the ring incident
with $r_1$ and $r_2$, and note that $|R|\ge 4$.  By (I4), the face $f$ incident with $r_1r_2$
has length at least six.  Let $G'$ be the graph obtained from $G$ by contracting the edge $r_1r_2$, let $\RR'$ be the set
of rings of $G'$ obtained from $\RR$ by contracting edge $r_1r_2$ in $R$, and let $f'$ be the face of $G'$ corresponding to $f$.
Observe that $G'$ is $\RR'$-critical.  Suppose that $G'$ contains a $(\le\!4)$-cycle $C'$ distinct from the rings.
Then $G$ contains a $(\le\!5)$-cycle $C$ distinct from the rings containing $r_1r_2$.  Since $G$ has internal girth at least $5$,
we have $|C|=5$, and we obtain a contradiction with (I4).  Therefore, $G'$ has internal girth at least $5$.
By induction, we have $w(G',\RR')=\tilde{\ell}(\RR')+\eta\cdot\surf(g, |\RR|, t_0, t_1)$, and since $\tilde{\ell}(\RR)=\tilde{\ell}(\RR')+1$ and
$w(f)\le w(f')+1$, $G$ satisfies the inequality of Theorem~\ref{thm:maingen}.  Therefore, assume that $G$ satisfies (I5).
Together with the previous paragraph, this implies that $G$ is well-behaved.

Suppose that $G$ contains a non-contractible cycle $C$ of length at most $12$ that does not surround any of the rings.
By (I7), $C$ intersects at most one ring, and by (\ref{cl-nostrp}), $C$ shares at most one vertex with this ring
(as otherwise each subpath of $C$ between consecutive intersections with this ring $R$ would be homotopically equivalent
to a path in $R$, and thus $C$ would be either contractible or homotopically equivalent to $R$, the latter implying that $C$
surrounds $R$).
Let $s=1$ if $C$ intersects a ring, and $s=0$ otherwise.
Let $J$ be the subgraph of $G$ consisting of $C$ and of the union of the rings, and let $S$ be the set of faces of $J$.
Let $\{G_1,\ldots, G_k\}$ be the $G$-expansion of $S$,
and for $1\le i\le k$, let $\Sigma_i$ be the surface in that $G_i$ is embedded and let $\RR_i$ be the natural rings of $G_i$.  Let $r=\left(\sum_{i=1}^k |\RR_i|\right)-|\RR|$.
Note that either $r+s=1$ and $k=1$, or $r+s=2$ and $1\le k\le 2$.  Observe that $\sum_{i=1}^k g(\Sigma_i)=g-s-r+2k-2$.
Furthermore, $\sum_{i=1}^k t_0(\RR_i)+\sum_{i=1}^k t_1(\RR_i)\ge t_0+t_1-s$ and $\sum_{i=1}^k\tilde{\ell}(\RR_i)\le \tilde{\ell}(\RR)+24$.

If $g(\Sigma_1)=g$, then $k=2$ and $g(\Sigma_2)=0$; furthermore, $\Sigma_2$ has at least two cuffs, and if $s=0$, then it has
at least three cuffs, since $C$ does not surround a ring.  Thus, if $g(\Sigma_1)=g$, then $r=2-s$ and consequently
$|\RR_1|=|\RR|+r-|\RR_2|=|\RR|+2-s-|\RR_2|<|\RR|$.  The same argument
can be applied to $\Sigma_2$ if $k=2$, hence $(G_i,\Sigma_i,\RR_i)\prec(G,\Sigma,\RR)$ for $1\le i\le k$.  

By induction, we conclude that
$$w(G,\RR)\le \tilde{\ell}(\RR)+24+\eta\cdot\sum_{i=1}^k\surf(g(\Sigma_i),|\RR_i|,t_0(\RR_i),t_1(\RR_i)).$$
For $1\le i\le k$, let $\delta_i=72$ if $g(\Sigma_i)=0$ and $|\RR_i|=1$, let $\delta_i=30$ if $g(\Sigma_i)=0$ and $|\RR_i|=2$, and let $\delta_i=0$ otherwise,
and note that since $\RR_i$ contains a non-vertex-like ring, we have $\surf(g(\Sigma_i),|\RR_i|,t_0(\RR_i),r_1(\RR_i))=\gen(g(\Sigma_i),|\RR_i|,t_0(\RR_i),r_1(\RR_i))+\delta_i$.

If $k=2$, then recall that since $C$ does not surround a ring, we have either $g(\Sigma_i)>0$ or $|\RR_i|\ge 3-s$ for
$i\in \{1,2\}$; hence, $\delta_1+\delta_2\le 30s$.

If $k=1$, then note that $G$ is not embedded in the projective plane with no rings (Thomassen~\cite{thom-torus} proved that every projective planar graph of girth at least five is $3$-colorable);
hence, if $s=0$, then either $g(\Sigma_1)>0$, or $|\RR_1|\ge 2$.  Consequently, we have $\delta_1\le 30+42s$.

Combining the inequalities, we obtain $\sum_{i=1}^k \delta_i\le 60+42s-30k$, and
\begin{align*}
\lefteqn{\sum_{i=1}^k\surf(g(\Sigma_i),|\RR_i|,t_0(\RR_i),t_1(\RR_i))}\\
&=\sum_{i=1}^k(\gen(g(\Sigma_i),|\RR_i|,t_0(\RR_i),t_1(\RR_i))+\delta_i)\\
&\le\surf(g,|\RR|,t_0,t_1)+120(2k-r-s-2)+48r-120(k-1)+5s+\sum_{i=1}^k\delta_i\\
&\le\surf(g,|\RR|,t_0,t_1)+90k-72(r+s)-60\\
&\le\surf(g,|\RR|,t_0,t_1)-24.
\end{align*}
This implies the inequality of Theorem~\ref{thm:maingen}.  Therefore, assume that every non-contractible cycle of length at most $12$
surrounds a ring.  In particular, $G$ satisfies (I8).

Suppose that $G$ contains an essential $\Theta$-subgraph $H$ with at most $12$ vertices.  Let $P_1$, $P_2$, and $P_3$
be the paths forming the $\Theta$-subgraph, and for $1\le i<j\le 3$, let $K_{ij}$ be the cycle $P_i\cup P_j$.
Since $|K_{ij}|\le 12$ and $K_{ij}$ is non-contractible, we conclude that $K_{ij}$ surrounds a ring $R_{ij}$ with cuff $C_{ij}$.
Let $\Delta_{ij}$ be the closed disk bounded by $K_{ij}$ in $\Sigma+\widehat{C_{ij}}$.  Note that $P_{6-i-j}$ intersects
$\Delta_{ij}$ only in its endpoints, as otherwise $H$ would be drawn in $\Delta_{ij}$ and it would contain a contractible cycle.
We conclude that $\Sigma$ is the sphere with three holes, each bounded by one of the cuffs $C_{12}$, $C_{23}$, and $C_{13}$.
Let $J=H\cup \bigcup_{R\in {\RR}} R$, let $S$ be the set of faces of $J$, and let
$\{G_1,\ldots, G_k\}$ be the $G$-expansion of $S$.  For $1\le a\le k$, let $\Sigma_a$ be the surface in that $G_a$ is embedded and let $\RR_a$ be the natural rings of $G_a$.
Note that $\Sigma_a$ is either a disk, or a cylinder corresponding to the part of $\Sigma$ between $R_{ij}$ and $K_{ij}$ for
some $1\le i<j\le 3$ such that $R_{ij}$ and $K_{ij}$ are disjoint; and in particular, $(G_a,\Sigma_a,\RR_a)\prec(G,\Sigma,\RR)$.
Let $s\le 3$ be the number of indices $a$ such that $\Sigma_a$
is a cylinder.  We have $\sum_{a=1}^k (t_0(\RR_a)+t_1(\RR_a))\ge t_0+t_1-(3-s)$ and
$\sum_{a=1}^k\tilde{\ell}(\RR_a)\le \tilde{\ell}(\RR)+26$.
By induction, we have that
\begin{align*}
w(G,\RR)&\le \tilde{\ell}(\RR)+26+\eta\cdot\sum_{a=1}^k\surf(g(\Sigma_a),|\RR_a|,t_0(\RR_a),t_1(\RR_a))\\
&=\tilde{\ell}(\RR)+26+\eta\cdot\Bigl(6s-4\sum_{a=1}^k t_1(\RR_i)-5\sum_{a=1}^k t_0(\RR_i)\Bigr)\\
&\le\tilde{\ell}(\RR)+26+\eta\cdot(6s-4t_1-5t_0+5(3-s))\\
&=\tilde{\ell}(\RR)+26+\eta\cdot(\surf(g,t,t_0,t_1)-9+s)<\tilde{\ell}(\RR)+\eta\cdot\surf(g,t,t_0,t_1).
\end{align*}
This gives the inequality of Theorem~\ref{thm:maingen}; therefore, assume that 
$G$ contains no essential $\Theta$-subgraph with at most $12$ vertices.

For each ring $R\in \RR$, let $M_R$ be the set of all edges incident with cycles of $G$ of length at most $7$ that surround
$R$, and let $C_R$ be such a cycle chosen so that the part $\Sigma_R$ of $\Sigma$ between $R$ and $C_R$ is as large as possible.
By Lemma~\ref{lem:concentric}, at most $70$ edges of $M_R$ are drawn outside of $\Sigma_R$.
Let $K_R$ be a $(\le\!7)$-cycle in $G\cap \Sigma_R$ chosen so that the part $\Sigma'_R$ of $\Sigma$ between $R$ and $K_R$ (including $R$, but excluding $K_R$)
is as small as possible.  Applying Lemma~\ref{lem:concentric} to the subgraph of $G$ drawn in $\Sigma_R$ with rings $R$ and $C_R$,
we see that at most $70$ edges of $M_R\cap \Sigma_R$ are drawn in $\Sigma'_R$.  We claim that at most $5\cyl(7,7)/s(5)$ edges of $G$ are drawn in $\Sigma_R\setminus\Sigma'_R$: When $K_R$ and $C_R$
are vertex-disjoint, this follows from Lemma~\ref{lem:cyl47}.  When $K_R$ intersects $C_R$, this is implied by Lemma~\ref{lem:diskcritical}
and Theorem~\ref{thm:diskgirth5}, since $\cyl(7,7)>s(14)$.  We conclude that $|M_R|\le 140+5\cyl(7,7)/s(5)$.

Let $M$ consist of all rings of length at most four and of all non-contractible cycles in $G$ of length at most $7$.
Observe that $M=\bigcup_{R\in\RR} M_R$, and thus
$|E(M)|\le (140+5\cyl(7,7)/s(5))|\RR|$.  Note that $M$ captures all $(\le\!4)$-cycles in $G$.
If $w(G,\RR)\le 8g+8|\RR|+(2/3+26\epsilon)\tilde{\ell}(\RR)+20|E(M)|/3-16$,
then $w(G,\RR)\le \tilde{\ell}(\RR)+\eta\cdot\surf(g,|\RR|,t_0,t_1)$ by the choice of $\eta$, and Theorem~\ref{thm:maingen} is true.
Therefore, assume that this is not the case, and since $\tilde{\ell}(\RR)\ge\ell(\RR)$, the assumptions of Theorem~\ref{thm:summary} are satisfied.

Let $G'$ be an $\RR$-critical graph embedded in $\Sigma$
such that $|E(G')|<|E(G)|$, satisfying the conditions of Theorem~\ref{thm:summary}.  In particular, (b) together with the
choice of $M$ implies that $G'$ has internal girth at least five.
Let $X\subset F(G)$ and $\{(J_f,S_f):f\in F(G')\}$ be the cover of $G$ by faces of $G'$ as in Theorem~\ref{thm:summary}(d).
For $f\in F(G')$, let $\{G^f_1, \ldots, G^f_{k_f}\}$
be the $G$-expansion of $S_f$ and for $1\le i\le k_f$, let $\Sigma^f_i$ be the surface in that $G^f_i$ is embedded and
let $\RR^f_i$ denote the natural rings of $G^f_i$.  We have 
\begin{equation}\label{eq-cover}
w(G,\RR)=\sum_{f\in F(G)} w(f)=\sum_{f\in X} w(f) + \sum_{f\in F(G')} \sum_{i=1}^{k_f} w(G^f_i,\RR^f_i).
\end{equation}

Consider a face $f\in F(G')$.  We have $g(\Sigma_f)\le g$.  If $g(\Sigma_f)=g$, then every component of $G'$ is planar,
and since $G'$ is $\RR$-critical, each component of $G'$ contains at least one ring of $\RR$; consequently, $f$ has at most
$|\RR|$ facial walks and $\Sigma_f$ has at most $|\RR|$ cuffs.  Since the surfaces embedding the components of the $G$-expansion of $S_f$ are
fragments of $\Sigma_f$, we have $(G^f_i,\Sigma^f_i,\RR^f_i)\prec (G,\Sigma,\RR)$ for $1\le i\le k_f$:
otherwise, we would have $m(G^f_i)=m(G)$, hence by the definition of $G$-expansion, the boundary of $S_f$ would have to be equal
to the union of rings in $\RR$, contrary to the definition of a cover of $G$ by faces of $G'$.

Therefore, we can apply Theorem~\ref{thm:maingen}
inductively to $G^f_i$ and we get $w(G^f_i,\RR^f_i)\le \tilde{\ell}(\RR^f_i)+\eta\cdot\surf(g(\Sigma^f_i),|\RR^f_i|,t_0(\RR^f_i), t_1(\RR^f_i))$.
Observe that since $\{\Sigma^f_1,\ldots,\Sigma^f_{k_f}\}$ are fragments of $\Sigma_f$, we have
$$\sum_{i=1}^{k_f} \surf(g(\Sigma^f_i),|\RR^f_i|,t_0(\RR^f_i),t_1(\RR^f_i))\le \surf(f),$$
and we obtain
\begin{equation}\label{eq-partsfprel}
\sum_{i=1}^{k_f} w(G^f_i,\RR^f_i)\le |f|+\el(f)+\eta\cdot\surf(f).
\end{equation}

In case that $f$ is open $2$-cell, all fragments of $f$ are disks and we can use Theorem~\ref{thm:diskgirth5}
instead of Theorem~\ref{thm:maingen}, getting the stronger inequality $w(G^f_i,\RR^f_i)\le s(\tilde{\ell}(\RR^f_i))$
for $1\le i\le k_f$.  Summing these inequalities, we can strengthen (\ref{eq-partsfprel}) to
\begin{equation}\label{eq-partsf}
\sum_{i=1}^{k_f} w(G^f_i,\RR^f_i)\le w(f)+\el(f)+\eta\cdot\surf(f).
\end{equation}

The inequalities (\ref{eq-cover}), (\ref{eq-partsf}) and Theorem~\ref{thm:summary}(d) imply that
\begin{align}
w(G,\RR)&\le |X|s(6)+\sum_{f\in F(G')} (w(f)+\el(f)+\eta\cdot\surf(f))\nonumber\\
&\le w(G',\RR)+s(6)+10+\eta\cdot\sum_{f\in F(G')} \surf(f).\label{eq-main}
\end{align}
If $G'$ has a face that is not open $2$-cell and no face of $G'$ is omnipresent, then
(\ref{cl-non2nonomni}) implies that
$$w(G',\RR)\le \tilde{\ell}(\RR) + \eta\cdot\Bigl(\surf(g, |\RR|, t_0,t_1) - 7 - \sum_{f\in F(G')} \surf(f)\Bigr),$$
and consequently $G$ satisfies the outcome of Theorem~\ref{thm:maingen}.
Therefore, we can assume that either all faces of $G'$ are open $2$-cell, or $G'$ has an omnipresent face.
Similarly, using (\ref{cl-rep1}) we can assume that if no face of $G'$ is omnipresent, then
all of them are semi-closed $2$-cell. 

Suppose first that $G$ has no omnipresent face.
If $G'$ has a vertex-like ring that is not vertex-like in $G$, then
by the induction hypothesis
$w(G',\RR)\le \tilde{\ell}(\RR) + \eta\cdot\surf(g, |\RR|, t_0,t_1+1)\le \tilde{\ell}(\RR) + \eta\cdot(\surf(g, |\RR|, t_0,t_1)-1)$,
and since $\surf(f)=0$ for every $f\in F(G')$, (\ref{eq-main}) implies that $G$ satisfies the outcome of Theorem~\ref{thm:maingen}.
Hence, assume that all vertex-like rings of $G'$ are vertex-like in $G$.
Using (\ref{eq-cover}) and Theorem~\ref{thm:summary}(d) and (e)
and applying Theorem~\ref{thm:maingen} inductively to $G'$, we have
\begin{align*}
w(G,\RR)&\le|X|s(6)+\sum_{f\in F(G')} (w(f)-c(f))\\
&=w(G',\RR)+|X|s(6)-\sum_{f\in F(G')} c(f)\\
&\le w(G',\RR)\le \tilde{\ell}(\RR)+\eta\cdot\surf(g,|\RR|,t_0,t_1),
\end{align*}
showing that $G$ satisfies the outcome of Theorem~\ref{thm:maingen}.

It remains to consider the case that $G'$ has an omnipresent face $h$.
Then, every component of $G$ is a plane graph with one ring, and by
Lemma~\ref{lem:crit3conn}, we conclude that every face
of $G$ different from $h$ is closed $2$-cell and $G'$ satisfies (I6).
By Theorem~\ref{thm:diskgirth5}, every vertex-like ring of $G'$ is isolated.
By (\ref{eq-cover}) and Theorem~\ref{thm:summary}(d) and (e) and by (\ref{eq-partsf}), we have
\begin{align*}
w(G,\RR)&\le |X|s(6)+\sum_{f\in F(G'),f\neq h} (w(f)-c(f))+\sum_{i=1}^{k_h}w(G^h_i,\RR^h_i)\\
&=w(G',\RR)+|X|s(6)+(c(h)-w(h))-\sum_{f\in F(G')} c(f)+\sum_{i=1}^{k_h}w(G^h_i,\RR^h_i)\\
&\le w(G',\RR)+c(h)-w(h)+\sum_{i=1}^{k_h}w(G^h_i,\RR^h_i)\\
&\le w(G',\RR)+c(h)+\el(h)+\eta\cdot\surf(g,|\RR|,t_0,t_1)
\end{align*}
By Theorem~\ref{thm:summary}(f), at least one component of $G'$ is not very exceptional.
We use (\ref{cl-omni}) to bound the weight of $G'$.  We obtain ($\kappa$ is defined as in (\ref{cl-omni}))
$$w(G,\RR)\le\tilde{\ell}(\RR)+\eta\cdot\surf(g,|\RR|,t_0,t_1)+c(h)+\el(h)-\kappa.$$
By Theorem~\ref{thm:summary}(d), we have $\el(h)\le 5$.
By the definition of $\kappa$ and of the contribution of $h$, it follows that
$c(h)+\el(h)\le \kappa$.  Therefore, $$w(G,\RR)\le \tilde{\ell}(\RR)+\eta\cdot\surf(g,|\RR|,t_0,t_1)$$ as required.
\end{proof}

Let us remark that Theorem~\ref{thm:maingen} implies the special case of Theorem~\ref{thm:corner} for graphs with no $4$-cycles,
by considering the triangles to be rings (we need to first split their vertices so that they become vertex-disjoint, then
drill holes in them).  Furthermore, Theorem~\ref{thm:mainsurf} follows as a special case when the set of rings is empty.

\bibliographystyle{acm}
\bibliography{4critsurf}

\section*{Appendix}

Here, we describe modifications to the proof of \cite[Theorem~9.1]{trfree2} to establish the statement of part (b)
of the corresponding Theorem~\ref{thm:summary} of the current paper.  We use terminology defined in \cite{trfree2}
without repeating the definitions here, as reading this Appendix is only meaningful in the context of that paper.

In the proof of \cite[Theorem~9.1]{trfree2}, we first establish existence of a good configuration $\gamma$ which strongly
appears in $G$ and does not touch $M$.  Then, we let $G_1$ be a $\gamma$-reduction of $G$ with respect to some precoloring $\phi$ of $\RR$
which does not extend to a $3$-coloring of $G$, and we let $G'$ be an $\RR$-critical subgraph of $G_1$.  Consider now a $(\le\!4)$-cycle $C'$ in $G'$.
In \cite[Lemma~6.2]{trfree2}, we establish that either a lift of $C'$ is a cycle $C$ in $G$, or $C'$ is non-contractible and $G$ contains a non-contractible cycle $C$
touching $\gamma$ with $|C|\le |C'|+3$.  In the former case, observe that $C$ (and thus also $C'$) is non-contractible by (I9) and (I4).
Let us now consider each of the subclaims of (b) separately.
\begin{enumerate}
\item  In \cite[Lemma~6.2]{trfree2}, we state that when no lift of $C'$ is a cycle in $G$, then
all ring vertices of $C'$ belong to $C$.  However, this is not quite true---it can happen that $\mathcal{I}_\gamma$ contains a vertex $v$ of
$C$ as well as a ring vertex $r$ not belonging to $C$, in which case we have $r\in V(C')$ after the identification of the vertices of $\mathcal{I}_\gamma$.
However, clearly only one such ring vertex $r\in V(C')\setminus V(C)$ can exist, and $r$ is joined to $v$ in $G$ by the replacement path $Q$ of the configuration
$\gamma$ of length at most three.  Since $\gamma$ does not touch $M$, we have $E(Q)\cap E(M)=\emptyset$ (in the case of the configuration $\R5$,
the replacement path also contains the edge $v_6x_6$ not incident with ${\cal F}_\gamma$; however, if $v_6x_6\in E(M)$, then also $v_6v_5$ or $v_6v_7$ would belong
to $E(M)$, since $M$ has minimum degree at least two).  The same argument applies in the case that a lift of $C'$ is a cycle in $G$.

\item Since $C\not\subseteq M$ and $M$ captures $(\le\!4)$-cycles, we have $|C|\ge 5$, and thus $C$ is not a lift of $C'$.
Since $\gamma$ strongly appears in $G$, either $\mathcal{A}_\gamma=\emptyset$ or $\mathcal{A}_\gamma$ contains an internal vertex, and since
$C'$ only contains ring vertices, $C'$ does not contain a new edge (added between the vertices of $\mathcal{A}_\gamma$ during the reduction of $\gamma$).
Hence, $C'$ is obtained from $C$ by contracting a replacement path between vertices of $\mathcal{I}_\gamma$.  If $|\mathcal{I}_\gamma|\le 2$, this implies
$V(C')\subseteq V(C)$.  If $|\mathcal{I}_\gamma|=3$, then $\gamma$ is $\R3$, and since $\gamma$ strongly appears in $G$,
we conclude that $V(C')\not\subseteq V(C)$ only when $v_4$, $v_5$, and $v_6$ are ring vertices, $v_6$ is the only ring neighbor of $v_1$, and $v_4$ is the only ring neighbor of $v_3$.
By (I4), we conclude that $v_4,v_5,v_6\in V(R)$ for a ring $R\in\RR$, $C$ is the concatenation of the path $R-v_5$ with the path $v_6v_1v_2v_3v_4$,
and $C'=R$.  However, then $R$ is a lift of $C'$, and we can choose $C=R$ instead.

\item If $C$ is not a lift of $C'$, then $C$ touches $\gamma$, and since $\gamma$ does not touch $M$, we have $C\not\subseteq M$.
Hence, we can assume that $C$ is a lift of $C'$.  Then clearly $|C|=|C'|$ and $C\cap \bigcup \RR\subseteq C'$.

\item If $C$ is not a lift of $C'$, then the statement is proved in \cite[Lemma~6.2]{trfree2} (even in a stronger form guaranteeing the existence
of two edges between $C$ and $R$).  If $C$ is a lift of $C'$, then at most two of the three edges joining $C'$ to $R$ in $G'$ can arise from
the addition of a new edge between the vertices of $\mathcal{A}_\gamma$ and the identification of the vertices of $\mathcal{I}_\gamma$,
and thus in $G$, the cycle $C$ has a neighbor in $R$.
\end{enumerate}
\end{document}